\definecolor{myred}{RGB}{255,50,50}         
\definecolor{myblack}{RGB}{0,0,0}           
\newtheorem{theorem}{Theorem}[section]
\newtheorem{lemma}[theorem]{Lemma}
\newtheorem{definition}[theorem]{Definition}
\newtheorem{corollary}[theorem]{Corollary}
\newtheorem{proposition}[theorem]{Proposition}
\numberwithin{equation}{section}
\newcommand{\argmin}{\mathop{\mathrm{argmin}}}   
\newcommand{\grad}{\nabla}                       
\renewcommand{\implies}{\Rightarrow}             
\newcommand{\inner}[2]{\langle#1,#2\rangle}      
\newcommand{\interior}{\mathrm{int}\,}           
\newcommand{\norm}[1]{\left\|#1\right\|}                    
\renewcommand{\Re}{\mathbb{R}}                   
\newcommand{\tr}{\mathrm{tr}}                    
\newcommand{\jProd}[2]{ {#1 \circ #2 } }		 
\newcommand{\PSDcone}[1]{{\mathcal{S}^{#1}_+}}	 
\newcommand{\tanCone}[2]{ {\mathcal{T}_{#2}(#1)}}	 
\newcommand{\matRank}{{\mathrm{ rank } \,}}	
\newcommand{\lineality}{\mathrm{lin}\,}
\renewcommand{\doteq}{:=}
\newcommand{\stdCone}{ {\mathcal{K}}}
\newcommand{\jAlg}{\mathcal{E}}
\newcommand{\aug}[1]{\mathfrak{L}_{#1}}
\newcommand{\augP}{\rho}
\newcommand{\mult}{\mu}
\newcommand{\eig}{\sigma}
\newcommand{\projC}[1]{P_{#1}}
\newcommand{\proj}[1]{\left[#1\right]_+}
\newcommand{\augSlack}{\aug{\augP}^{\text{Slack}}}
\newcommand{\augSlackP}[1]{\aug{#1}^{\text{Slack}}}
\newcommand{\augCone}{\aug{\augP}^{\text{Sym}}}
\newcommand{\augConeP}[1]{\aug{#1}^{\text{Sym}}}
\newcommand{\stdFace}{ {\mathcal{F}}}
\newcommand{\minFacePoint}[2]{ {\mathcal{F}(#1,#2)}}
\DeclarePairedDelimiter\abs{\lvert}{\rvert}%
\newcommand{\ball}[2]{ {B_{#2}(#1)}}
\newcommand{\jac}[1]{J#1}
\begin{document}
\title{Optimality conditions for problems over symmetric cones\\ and a simple augmented Lagrangian method%
	}
\author{
	Bruno F. Louren\c{c}o%
	\thanks{Department of Computer and Information Science, 
		Faculty of Science and Technology, 
		Seikei University, Tokyo 180-8633, Japan
		(\texttt{lourenco@st.seikei.ac.jp}).}
	\and   
	Ellen H. Fukuda%
	\thanks{Department of Applied Mathematics and Physics, 
		Graduate School of Informatics, 
		Kyoto University, Kyoto 606-8501, Japan
		(\texttt{ellen@i.kyoto-u.ac.jp}).}
	\and 
	Masao Fukushima%
	\thanks{
		Department of Systems and Mathematical Science, 
		Faculty of Science and Engineering,
		Nanzan University, Nagoya, Aichi 466-8673, Japan
		(\texttt{fuku@nanzan-u.ac.jp}).
	}
}
\date{September 1, 2017 (Updated \today)}
\maketitle
\begin{abstract}
\noindent In this work we are interested in nonlinear symmetric cone problems (NSCPs), 
which contain as special cases nonlinear semidefinite programming, nonlinear second order cone 
programming and the classical nonlinear programming problems. We
explore the possibility of reformulating NSCPs as common nonlinear programs (NLPs), 
with the aid of squared slack variables.
Through this connection, we show how to obtain second order optimality 
conditions  for NSCPs in an easy manner, thus bypassing a number of 
difficulties associated to the usual variational analytical approach. 
We then discuss several aspects of this connection. In particular, 
we show a ``sharp'' criterion for membership in a symmetric cone that also encodes rank information.
Also, we discuss the possibility of importing convergence results from  
nonlinear programming to NSCPs, which we illustrate by discussing a simple augmented Lagrangian method for nonlinear symmetric cones. 
We show that, employing the slack variable approach, we can use the results 
for NLPs to prove convergence results, thus extending
a special case (i.e., the case with strict complementarity) of an earlier result by Sun, Sun and Zhang for nonlinear semidefinite programs.\\

\noindent \textbf{Keywords:} symmetric cone,
  optimality conditions, augmented Lagrangian.
\end{abstract}

\section{Introduction}
In this paper, we  analyze optimality conditions for the following problem:
\begin{equation}
  \label{eq:sym}
  \tag{P1}
  \begin{array}{ll}
    \underset{x}{\mbox{minimize}} & f(x) \\ 
    \mbox{subject to} &  h(x) = 0,\\
    & g(x) \in \stdCone,\\
    & x \in \Re^n,
  \end{array}
\end{equation}
where $f \colon \Re^n \to \Re, h \colon \Re^n \to \Re^m$ and $g \colon \Re^n \to \jAlg$ are twice
continuously differentiable functions. We assume that $\jAlg$ is a 
finite dimensional space equipped with an inner product $\inner{\cdot}{\cdot}$.
Here,  $\stdCone \subseteq \jAlg$ is a \emph{symmetric cone}, 
that is
\begin{enumerate}
\item $\stdCone$ is \emph{self-dual}, \emph{i.e.}, $\stdCone \coloneqq \stdCone ^* = \{x \mid \inner{x}{y} \geq 0, \forall y \in \stdCone \},$

\item $\stdCone$ is \emph{full-dimensional}, i.e., the interior of $\stdCone$ is 
not empty,
\item $\stdCone$ is \emph{homogeneous}, i.e., for every pair of points $x,y$ in the interior of $\stdCone$, there is a linear bijection $T$ such that $T(x) = y$ and $T(\stdCone) = \stdCone$. In short, 
the group of automorphism of $\stdCone$ acts transitively on the interior of $\stdCone$.
\end{enumerate}

Then, problem \eqref{eq:sym} is called a \emph{nonlinear symmetric cone problem} (NSCP).
One of the advantages of dealing with NSCPs is that it provides an unified framework for a number 
of different problems including classical nonlinear programs (NLPs), nonlinear semidefinite programs (NSDPs), 
nonlinear second order cone programs (NSOCPs) and any mixture of those three.

Since $\stdCone$ is a symmetric cone, we may assume that $\jAlg$ is equipped with a bilinear map $\jProd{}{}:\jAlg\times \jAlg \to \jAlg$ such that 
$\stdCone$ is the corresponding cone of squares, that is,
$$
\stdCone = \{\jProd{y}{y} \mid y \in \jAlg\}.
$$ 
Furthermore, we assume that $\jProd{}{}$ possesses the following three properties:
\begin{enumerate}
	\item $\jProd{y}{z} = \jProd{z}{y}$,
	\item $\jProd{y}({\jProd{y^2}{z}}) = \jProd{y^2}({\jProd{y}{z}})$, where $y^2 = \jProd{y}{y}$,
	\item $\inner{\jProd{y}{z}}{w} = \inner{y}{\jProd{z}{w}}$,
\end{enumerate}
for all $y,w,z \in \jAlg$. Under these conditions, $(\jAlg, \jProd{}{})$ is called 
an \emph{Euclidean Jordan algebra}. It can be shown that every symmetric cone arises 
as the cone of squares of some Euclidean Jordan algebra,  see Theorems III.2.1 and III.3.1 in \cite{FK94}. 
We will use $\norm{\cdot}$ to indicate the norm induced by $\inner{\cdot}{\cdot}$. We remark that no 
previous knowledge on Jordan algebras will be assumed and we will try to be as self-contained as possible.

While the analysis of first order conditions for \eqref{eq:sym} is relatively straightforward, 
it is challenging to obtain a workable description of second order conditions  for \eqref{eq:sym}.
We recall that for NSOCPs and NSDPs, in order to obtain the so-called ``zero-gap'' optimality conditions, 
it is not enough to build the Lagrangian and require it to be positive semidefinite/definite over the critical 
cone. In fact, an extra term is needed which, in the literature, is known as the \emph{sigma-term} and is said to model the curvature 
of the underlying cone. The term ``zero-gap'' alludes to the fact that the change from ``necessary'' to ``sufficient'' should 
involve, apart from minor technicalities,  only a change from ``$\geq$'' to ``$>$'',
as it is the case for classical nonlinear programming (see Theorems 12.5 and 12.6 in \cite{NW99} or Section 3.3 in \cite{Ber99}).   

Typically, there are two approaches for obtaining ``zero-gap'' second order conditions. 
The first is to compute directly the so-called 
\emph{second order tangent sets} of $\stdCone$. This was done, for instance, 
by Bonnans and Ram\'irez in \cite{BR05} for NSOCP. Another approach is 
to express the cone as
$$
\stdCone = \{ z \in \jAlg \mid \varphi(z) \leq 0\}, 
$$
where $\varphi$ is some convex function. Then, the second order tangent 
sets can be computed by examining the second order directional derivatives of 
$\varphi$. This is the approach favored by Shapiro in \cite{shapiro97} for NSDPs.

For $\stdCone$, there is a natural candidate for $\varphi$. 
Over an Euclidean Jordan algebra, we have a ``minimum eigenvalue 
function'' $\eig _{\min}$, for which $x \in \stdCone$ if and only if $\eig _{\min}(x) \geq 0$, in analogy 
to the positive semidefinite cone case. 
We then take  $\varphi = - \eig _{\min}$. 
Unfortunately, as far as we know, it is still an 
open problem to give explicit descriptions of higher order directional derivatives for 
$-\eig _{\min}$. 
In addition, it seems complicated to describe the second order 
tangent sets of $\stdCone$ directly. 

In fact, the only reference we could find that discussed second order 
conditions for NSCPs is the work by Kong, Tun\c{c}el and Xiu \cite{Kong2011}, where they define the strong 
second order sufficient condition for a linear symmetric cone program, see Definition 3.3 therein.
Here, we bypass all these difficulties by exploring the Jordan algebraic connection and 
transforming \eqref{eq:sym} into an ordinary nonlinear program with equality constraints:
\begin{equation}
  \label{eq:slack}
  \tag{P2}
  \begin{array}{ll}
    \underset{x,y}{\mbox{minimize}} & f(x) \\ 
    \mbox{subject to} & h(x) = 0,\\ 
    & g(x) =  \jProd{y}{y},\\
    & x\in \Re^n, y \in \jAlg.
  \end{array}
\end{equation}
We will then use \eqref{eq:slack} to derive optimality conditions for 
\eqref{eq:sym}. By writing down the second order conditions  for 
\eqref{eq:slack} and eliminating the slack variable $y$, we can obtain 
second order conditions for \eqref{eq:sym}. This is explained in more 
detail in Section \ref{sec:soc}. The drawback of this approach is that 
the resulting second order conditions require strict complementarity. 
How serious this drawback is depends, of course, on the specific applications
one has in mind. Still, we believe the connection between the two formulations 
can bring some new insights.

In particular, through this work we found a ``sharp'' characterization of 
membership in a symmetric cone. Note that since 
$\stdCone$ is self-dual, a necessary and sufficient condition for some $\lambda \in \jAlg$ to belong to $\stdCone$ is that $\inner{z}{\lambda} \geq 0$ holds for all $z \in \stdCone$, 
or equivalently, that $\inner{\jProd{w}{w}}{\lambda} \geq 0$ for all $w \in \jAlg$. 
This, however, gives 
no information on the rank of $\lambda$. In contrast, Theorem~\ref{theo:strict} shows that if we 
instead require that $\inner{\jProd{w}{w}}{\lambda} > 0$ for all nonzero $w$ in some 
special subspace of $\jAlg$, this  not only guarantees that $\lambda \in \stdCone$, but 
also reveals information about its rank. This generalizes Lemma~1 in \cite{LFF16} for 
all symmetric cones.

Moreover, our analysis opens up the possibility of importing convergence results from 
the NLP world to the NSCP world, instead of proving them from scratch. 
In Section \ref{sec:alg}, 
we illustrate this by extending a result of Sun, Sun and Zhang \cite{SSZ08} on the quadratic augmented Lagrangian method. 

The paper is organized as follows. In Section~\ref{sec:prel}, 
we review basic notions related to Euclidean Jordan algebras, KKT points 
and second order conditions. In
Section~\ref{sec:crit}, we prove a criterion for membership in a 
symmetric cone. In Section \ref{sec:kkt}, we provide sufficient conditions 
that guarantee equivalence between KKT points of \eqref{eq:sym} and 
\eqref{eq:slack}.
In Section~\ref{sec:cons}, we discussion the relation between constraint
qualifications of those two problems. In Section \ref{sec:soc}, we present 
second order conditions for \eqref{eq:sym}. In Section~\ref{sec:alg}, we discuss 
a  simple augmented Lagrangian method. We conclude in Section~\ref{sec:conc}, with final remarks and a few suggestions
for further work.

%

\section{Preliminaries}\label{sec:prel}
\subsection{Euclidean Jordan algebra}
We first review a few aspects of the theory of Euclidean Jordan algebras. More 
details can be found in the book by Faraut and Kor\'anyi \cite{FK94} and also 
in the survey paper by Faybusovich \cite{FB08}. 
First of all, any 
symmetric cone $\stdCone$ arises as the cone of squares of some Euclidean Jordan algebra $(\jAlg, \jProd{}{})$. Furthermore, 
we can assume that $\jAlg$ has an unit element $e$ satisfying $\jProd{y}{e} = y$, for all $y \in \jAlg$. 
Reciprocally, given an Euclidean Jordan algebra $(\jAlg, \jProd{}{})$, it can be shown that the corresponding 
cone of squares is a symmetric cone. See Theorems III.2.1 and III.3.1 in \cite{FK94}, for more details.

Given $y \in \jAlg$, we denote by $L_y$ the linear operator such that
$$
L_y(w) = \jProd{y}{w},
$$
for all $w \in \jAlg$.

In what follows, we say that $c$ is an \emph{idempotent} if 
$\jProd{c}{c} = c$. Morover, $c$ is \emph{primitive} if it is nonzero and there is no way of writing 
$
c = a+b,
$
with  nonzero idempotents $a$ and $b$ satisfying $\jProd{a}{b} = 0$.

\begin{theorem} [Spectral Theorem, see Theorem III.1.2 in \cite{FK94}]\label{theo:spec}
	Let $(\jAlg, \jProd{}{} )$ be an Euclidean Jordan algebra and let $y \in \jAlg$. Then there are	 primitive idempotents $c_1, \dots, c_r$ satisfying
		\begin{flalign}
		& \jProd{c_i}{c_j} = 0 \, \,\,\,\, \qquad \qquad \text{ for } i \neq j, \\
		&\jProd{c_i}{c_i} = c_i, \, \, \, \, \qquad \qquad  i = 1, \ldots, r, \\
		& c_1 + \cdots + c_r  = e, \, \qquad i = 1, \ldots, r,
		\end{flalign}
and  unique real numbers $\eig _1, \ldots, \eig _r$ satisfying
		\begin{equation}		
		y = \sum _{i=1}^r \eig _i c_i \label{eq:dec}.
		\end{equation}

\end{theorem}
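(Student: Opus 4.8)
The plan is to reduce everything to the commutative, associative subalgebra generated by $y$ and $e$, and then to manufacture the idempotents as polynomials in $y$ via Lagrange interpolation, reading off the reality of the eigenvalues from the self-adjointness of $L_y$. First I would use the Jordan identity (property 2) to establish power-associativity: the subalgebra generated by a single element is associative, so the powers $y, y^2, y^3, \ldots$ are unambiguously defined and the operators $L_{y^i}$ pairwise commute. Consequently $\mathcal{A} := \mathrm{span}\{e, y, y^2, \ldots\}$ is a finite-dimensional commutative associative unital algebra, and $y$ satisfies a unique monic minimal polynomial $p$ of least degree with $p(y) = 0$.

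The crucial point is that the roots of $p$ are real and simple. Property 3 says exactly that $\inner{L_y z}{w} = \inner{z}{L_y w}$, so $L_y$ is self-adjoint with respect to $\inner{\cdot}{\cdot}$; since $\mathcal{A}$ is $L_y$-invariant, the restriction of $L_y$ to $\mathcal{A}$ is self-adjoint, hence diagonalizable with real spectrum. Because $\mathcal{A}$ is unital, the minimal polynomial of the element $y$ coincides with the minimal polynomial of this restriction, so $p(t) = \prod_{i=1}^s (t - \mu_i)$ with distinct real $\mu_i$. I would then define, via Lagrange interpolation,
\[
c_i := \prod_{j \neq i} \frac{y - \mu_j e}{\mu_i - \mu_j}, \qquad i = 1, \ldots, s,
\]
and verify inside $\mathcal{A}$ the standard interpolation identities $\sum_i c_i = e$, $\jProd{c_i}{c_i} = c_i$ and $\jProd{c_i}{c_j} = 0$ for $i \neq j$ (each holding modulo $p(y) = 0$), which immediately yields $y = \sum_{i=1}^s \mu_i c_i$ as a sum of orthogonal idempotents summing to $e$.

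What remains is to refine this into a decomposition by \emph{primitive} idempotents and to settle uniqueness. Each $c_i$ above need not be primitive, so I would argue by induction on $\dim \jAlg$: if an idempotent $c$ is not primitive, write $c = a + b$ with nonzero orthogonal idempotents and apply the inductive hypothesis on the eigenspace $\{z \in \jAlg \mid \jProd{c}{z} = z\}$ of $L_c$, splitting each $c_i$ into primitive pieces while preserving orthogonality across the different $c_i$. Collecting these pieces produces primitive idempotents $c_1, \ldots, c_r$ with $\sum_i c_i = e$ and $y = \sum_{i=1}^r \eig_i c_i$, where the $\eig_i$ repeat the $\mu_i$ according to the dimensions of the corresponding eigenspaces. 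Uniqueness of the multiset $\eig_1, \ldots, \eig_r$ then follows because these numbers are exactly the roots of $p$ counted with those dimensions, all of which are intrinsic to $y$.

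The main obstacle I anticipate is this refinement step and its bookkeeping: proving that every idempotent decomposes into orthogonal primitive idempotents, and that the total number $r$ of primitive idempotents appearing is the same for every $y$ (the rank of the algebra), requires the Peirce decomposition associated with an idempotent together with an induction set up so that orthogonality is maintained at each stage. By contrast, the reality and simplicity of the eigenvalues and the interpolation identities are comparatively routine once property 3 is recognized as the self-adjointness of $L_y$.
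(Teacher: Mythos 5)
Your proposal addresses a statement the paper does not prove at all --- it is quoted directly from Theorem III.1.2 of \cite{FK94} --- so the comparison must be with the standard proof in that reference, and your existence argument is essentially that proof: power-associativity makes $\mathrm{span}\{e,y,y^2,\dots\}$ a commutative associative unital algebra, property 3 of the Jordan product makes $L_y$ self-adjoint so that the minimal polynomial $p$ of $y$ has simple real roots, Lagrange interpolation yields a complete system of orthogonal idempotents with $y=\sum_i \mu_i c_i$, and a Peirce-decomposition induction (on $\dim V(c,1)$) splits each $c_i$ into primitive idempotents while preserving orthogonality across the pieces. That half is sound and is exactly the route of the cited source.

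The genuine gap is in the multiplicity and uniqueness claims. First, a concrete error: the number of times $\mu_i$ must appear is not ``the dimension of the corresponding eigenspace.'' In the algebra of real symmetric matrices, an idempotent $c$ of matrix rank $k$ has $\dim V(c,1)=k(k+1)/2$, while the eigenvalue attached to $c$ appears exactly $k$ times in the spectral decomposition; the correct multiplicity is the number of primitive pieces into which $c_i$ splits (the rank of the subalgebra $V(c_i,1)$), not its dimension. Second, and more fundamentally, nothing in your toolkit shows that this number of pieces is independent of the chosen splitting, that the total count $r$ is the same for every $y$ (and equals $\tr(e)$), and hence that the multiset $\eig_1,\dots,\eig_r$ is unique. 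Peirce decomposition plus induction produces \emph{some} primitive decomposition, but the invariance of the count is the Jordan-algebraic analogue of ``every basis has the same cardinality'' and does not follow from the orthogonality bookkeeping you describe; in \cite{FK94} it rests on the generic minimal polynomial and rank theory developed before Theorem III.1.2 (equivalently, one must prove that every primitive idempotent has trace $1$). As written, your uniqueness argument fails already in the symmetric-matrix case, so this portion of the proof needs a different idea rather than a tightening.
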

We say that  $c_1, \ldots , c_r$ in Theorem~\ref{theo:spec} form a  \emph{Jordan frame} for $y$, and  
$\lambda _1, \ldots, \lambda _r$ are the \emph{eigenvalues} of $y$.
We 
remark that $r$ only depends on the algebra $\jAlg$. 
Given $y \in \jAlg$, we define its trace by
$$
\tr (y) = \eig _1 + \cdots + \eig _r,
$$
where $\eig _1, \ldots, \eig _r$ are the eigenvalues of 
$y$. As in the case of matrices, it turns out that the trace function 
is  linear. 
It can also be used to define an inner product compatible 
with the Jordan product, and so henceforth we will assume that 
$\inner{x}{y} = \tr(\jProd{x}{y})$. In the case of 
symmetric matrices, $\inner{\cdot}{\cdot}$ turns out to be the Frobenius inner product.

For an element $y \in \jAlg$, we define the \emph{rank} of $y$ as the number of nonzero $\lambda _i$'s that appear in \eqref{eq:dec}. Then, the rank of $\stdCone$ is defined by 
$$
\matRank \stdCone = \max \{ \matRank y \mid y \in \stdCone \} = r = \tr (e).
$$
We will also say that the rank of  $\jAlg$ is $r = \tr (e)$.


For the next theorem, we need the following notation. Given $y\in \jAlg$ and $a \in \Re$, we write 
$$
V(y,a) = \{z \in \jAlg \mid \jProd{y}{z} = az \}.
$$
For any $V,V' \subseteq \jAlg$, we write $\jProd{V}{V'} = \{\jProd{y}{z} \mid y \in V, z \in V'\}$.
\begin{theorem}[Peirce Decomposition -- 1st version, see Proposition IV.1.1 in \cite{FK94}]\label{theo:peirce1}
	Let $c \in \jAlg$ be an idempotent. Then $\jAlg$ is decomposed as the orthogonal direct sum
	$$
	\jAlg = V(c,1) \bigoplus V\left(c,\frac{1}{2}\right) \bigoplus V(c,0).
	$$	
	In addition, $V(c,1)$ and $V(c,0)$ are  Euclidean Jordan algebras satisfying 
	$\jProd{V(c,1)}{V(c,0)} = \{0\}$. Moreover, $\jProd{(V(c,1)+V(c,0))}{V(c,1/2)} \subseteq V(c,1/2)$ and 
	$\jProd{V(c,1/2)}{V(c,1/2)} \subseteq V(c,1) + V(c,0) $.
\end{theorem}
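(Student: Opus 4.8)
The plan is to reduce the entire statement to the spectral analysis of the single operator $L_c$. Writing $L_c(w) = \jProd{c}{w}$, property~3 of the Jordan product gives $\inner{L_c w}{z} = \inner{\jProd{c}{w}}{z} = \inner{w}{\jProd{c}{z}} = \inner{w}{L_c z}$, so $L_c$ is self-adjoint for $\inner{\cdot}{\cdot}$. Hence $L_c$ is diagonalizable with real eigenvalues and mutually orthogonal eigenspaces, and by definition the eigenspace for eigenvalue $a$ is precisely $V(c,a) = \ker(L_c - aI)$. Thus $\jAlg$ is the orthogonal direct sum of the $V(c,a)$ over the eigenvalues $a$ of $L_c$, and the whole theorem will follow once I (i) show the only possible eigenvalues are $0,\tfrac12,1$, and (ii) determine in which eigenspaces the products $\jProd{u}{v}$ of eigenvectors land.

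The technical heart, which I expect to be the main obstacle, is extracting two operator identities from the Jordan axiom (property~2), written as $L_{y^2}L_y = L_y L_{y^2}$. I would substitute $y = c + t u$ for arbitrary $u \in \jAlg$, $t \in \Re$, use $\jProd{c}{c}=c$ to expand $y^2 = c + 2t\,(\jProd{c}{u}) + t^2\,\jProd{u}{u}$, and compare coefficients of $t$ and of $t^2$. Denoting commutators by $[\cdot,\cdot]$, the coefficient of $t$ gives
$$[L_c, L_u] = 2\,[L_c, L_{\jProd{c}{u}}],$$
and the coefficient of $t^2$ gives
$$[L_{\jProd{u}{u}}, L_c] = 2\,[L_u, L_{\jProd{c}{u}}].$$
The delicate part is purely the bookkeeping of this polarization; once the two identities are in hand, everything else is short.

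For the eigenvalue bound I would take $u \in V(c,\lambda)$, so that $\jProd{c}{u} = \lambda u$ and $L_{\jProd{c}{u}} = \lambda L_u$. The first identity then reads $(1-2\lambda)[L_c, L_u] = 0$. When $\lambda \neq \tfrac12$ this forces $L_c L_u = L_u L_c$; applying this equality to $c$ and using $\jProd{c}{c}=c$ gives $\lambda^2 u = \lambda u$, hence $\lambda \in \{0,1\}$. Together with the case $\lambda = \tfrac12$, every eigenvalue lies in $\{0,\tfrac12,1\}$, yielding the orthogonal decomposition $\jAlg = V(c,1)\oplus V(c,\tfrac12)\oplus V(c,0)$.

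Finally I would read off the multiplication rules from the same commutativity. For $u \in V(c,1)$ or $u \in V(c,0)$ the value $\lambda \neq \tfrac12$ gives $[L_c,L_u]=0$, so for any eigenvector $v \in V(c,\beta)$ one has $\jProd{c}{(\jProd{u}{v})} = L_c L_u v = L_u L_c v = \beta\,\jProd{u}{v}$, i.e. $\jProd{u}{v}$ retains the eigenvalue $\beta$. Taking $v \in V(c,\tfrac12)$ gives $(V(c,1)+V(c,0))\circ V(c,\tfrac12) \subseteq V(c,\tfrac12)$, and taking both factors in $V(c,1)$ (resp. $V(c,0)$) shows these are subalgebras, with units $c$ and $e-c$ respectively (note $e-c$ is idempotent and lies in $V(c,0)$); inheriting the axioms and the restricted inner product, they are Euclidean Jordan algebras. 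For $u \in V(c,1)$, $v \in V(c,0)$, running the computation from each side places $\jProd{u}{v}$ in $V(c,0)$ and in $V(c,1)$ simultaneously, so $\jProd{u}{v}=0$. The remaining inclusion $V(c,\tfrac12)\circ V(c,\tfrac12)\subseteq V(c,1)+V(c,0)$ is the one case where $[L_c,L_u]$ need not vanish, and here I would invoke the second identity: for $v \in V(c,\tfrac12)$ it yields $[L_{\jProd{v}{v}},L_c]=0$, and evaluating at $c$ gives $(L_c^2 - L_c)(\jProd{v}{v})=0$, so the $V(c,\tfrac12)$-component of $\jProd{v}{v}$, on which $L_c^2-L_c$ acts as $-\tfrac14 \neq 0$, must vanish. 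Polarizing over $V(c,\tfrac12)$ then gives the claimed inclusion.
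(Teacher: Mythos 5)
Your proof is correct, but note that there is no proof in the paper to compare it against: Theorem~\ref{theo:peirce1} is quoted as background, with its proof deferred to Proposition~IV.1.1 of \cite{FK94}. What you have written is a correct, self-contained substitute for that citation, and it is essentially a compressed reconstruction of the standard argument. The two commutator identities you extract from $[L_{c+tu},L_{(c+tu)^2}]=0$ by matching coefficients of $t$ and $t^2$ are exactly the specializations (at $x=z=c$, $y=u$, respectively $x=z=u$, $y=c$) of the fully linearized Jordan identity $[L_x,L_{\jProd{y}{z}}]+[L_y,L_{\jProd{z}{x}}]+[L_z,L_{\jProd{x}{y}}]=0$ that Faraut and Kor\'anyi establish first and then specialize in the same two ways; polarizing only in the direction $c+tu$ buys you precisely the identities you need without developing the general trilinear identity, which is a leaner route to the same place. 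The individual steps all check out: $L_c$ is self-adjoint by property~3 of the Jordan product, so the spectral theorem yields the orthogonal eigenspace decomposition; $(1-2\lambda)[L_c,L_u]=0$ for $u\in V(c,\lambda)$, evaluated at $c$, forces $\lambda^2=\lambda$ when $\lambda\neq\tfrac12$; commutation of $L_c$ with $L_u$ for $u$ in $V(c,1)$ or $V(c,0)$ gives $\jProd{(V(c,1)+V(c,0))}{V(c,1/2)}\subseteq V(c,1/2)$, closure of $V(c,1)$ and $V(c,0)$ under $\circ$ (and your parenthetical identification of the units $c$ and $e-c$ is exactly what is needed to call them Euclidean Jordan algebras), and, running the computation from both factors, $\jProd{V(c,1)}{V(c,0)}=\{0\}$ since $V(c,1)\cap V(c,0)=\{0\}$; finally, $[L_{v^2},L_c]=0$ for $v\in V(c,1/2)$ evaluated at $c$ gives $(L_c^2-L_c)(v^2)=0$, which kills the $V(c,1/2)$-component of $v^2$ because $L_c^2-L_c$ acts there as $-\tfrac14 I$ and as $0$ on the other two summands, after which the polarization $\jProd{v}{w}=\tfrac12\bigl((v+w)^2-v^2-w^2\bigr)$ yields the full inclusion $\jProd{V(c,1/2)}{V(c,1/2)}\subseteq V(c,1)+V(c,0)$. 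In short: where the paper outsources this result, your argument proves it from the three axioms stated in the introduction, at the cost of about a page of the kind of bookkeeping you correctly identified as the technical heart.
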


The Peirce decomposition has another version, with  detailed information on the way 
that the algebra is decomposed.

\begin{theorem}[Peirce Decomposition -- 2nd version, see Theorem IV.2.1 in \cite{FK94}]\label{theo:peirce2}
Let $c_1, \ldots, c_r$ be a Jordan frame for $y \in \jAlg$. Then $\jAlg$ is decomposed as the orthogonal sum
$$
\jAlg = \bigoplus _{1\leq i \leq j\leq r} V_{ij},
$$
where
\begin{align*}
V_{ii} &= V(c_i,1) = \{\alpha c_i \mid \alpha \in \Re \}, \\
V_{ij} &= V\left(c_i,\frac{1}{2}\right) \cap V\left(c_j,\frac{1}{2}\right), \qquad \text{  for } i \neq j. 
\end{align*}
Moreover
\begin{enumerate}[label=({\it \roman*})]
	\item the $V_{ii}$'s are subalgebras of $\jAlg$,
	\item the following relations hold:
	\begin{flalign}
	\jProd{V_{ij}}{V_{ij}}  \subseteq V_{ii} + V_{jj} & \qquad \, \, \forall i,j, \label{eq:peirce:1}\\
	\jProd{V_{ij}}{V_{jk}}  \subseteq V_{ik}& \qquad \text{ if } i\neq k,  \label{eq:peirce:2} \\
	\jProd{V_{ij}}{V_{kl}}  = \{0\}& \qquad \text{ if } \{i,j\} \cap \{k,l\} = \emptyset. \label{eq:peirce:3}
	\end{flalign}
\end{enumerate} 

\end{theorem}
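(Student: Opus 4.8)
The plan is to deduce the result by applying the single-idempotent decomposition of Theorem~\ref{theo:peirce1} simultaneously to $c_1,\dots,c_r$; the pivotal point will be that the operators $L_{c_1},\dots,L_{c_r}$ commute, so that they can be diagonalized at once. As a warm-up I would record two facts about each $L_{c_i}$. First, $L_{c_i}$ is self-adjoint with respect to $\inner{\cdot}{\cdot}$, since $\inner{\jProd{c_i}{x}}{w} = \inner{x}{\jProd{c_i}{w}}$ for all $x,w \in \jAlg$, a direct consequence of properties~1 and~3 of the Jordan product. Second, Theorem~\ref{theo:peirce1} applied to $c_i$ states exactly that $\jAlg = V(c_i,1)\oplus V(c_i,\tfrac12)\oplus V(c_i,0)$ is the orthogonal eigenspace decomposition of $L_{c_i}$, so $L_{c_i}$ is diagonalizable with spectrum contained in $\{0,\tfrac12,1\}$.

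The heart of the argument is to show $L_{c_i}L_{c_j} = L_{c_j}L_{c_i}$ for $i\neq j$, which I would get by polarizing the Jordan identity. Writing property~2 in operator form as $L_{y^2}L_y = L_y L_{y^2}$, I replace $y$ by $y+tx$ and equate the coefficient of $t$, which gives
\[
2\,L_y L_{\jProd{x}{y}} + L_x L_{y^2} = 2\,L_{\jProd{x}{y}} L_y + L_{y^2} L_x
\]
for all $x,y \in \jAlg$. Taking $y = c_i$ and $x = c_j$ and using $\jProd{c_i}{c_j}=0$ and $\jProd{c_i}{c_i}=c_i$, the terms containing $L_{\jProd{c_i}{c_j}} = 0$ vanish and the equation collapses to $L_{c_j}L_{c_i} = L_{c_i}L_{c_j}$. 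I expect this to be the only genuinely nontrivial step, as it is the single place where the Jordan identity is used in an essential (and mildly computational) way.

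Granting commutativity, the self-adjoint operators $L_{c_1},\dots,L_{c_r}$ share an orthogonal eigenbasis, so $\jAlg$ is the orthogonal direct sum of the common eigenspaces $\bigcap_i V(c_i,a_i)$ over tuples $(a_1,\dots,a_r)$ with $a_i \in \{0,\tfrac12,1\}$. Here I would invoke the frame relation $c_1+\cdots+c_r = e$: since $e$ is the unit, $\sum_i L_{c_i}$ is the identity, so any nonzero common eigenvector with eigenvalues $(a_1,\dots,a_r)$ must satisfy $\sum_i a_i = 1$. The only admissible tuples are those with a single entry equal to $1$, yielding $V(c_i,1)=V_{ii}$, or exactly two entries equal to $\tfrac12$, yielding $V(c_i,\tfrac12)\cap V(c_j,\tfrac12)=V_{ij}$; this gives precisely $\jAlg = \bigoplus_{1\le i\le j\le r}V_{ij}$. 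The equality $V_{ii}=\Re c_i$ then comes from primitivity of $c_i$: the algebra $V(c_i,1)$ has unit $c_i$, and if it were more than one-dimensional it would contain a nontrivial idempotent splitting $c_i$ into orthogonal idempotents, contradicting primitivity. In particular each $V_{ii}$ is a one-dimensional subalgebra, which is~(i).

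Finally, I would read off the multiplication rules by intersecting the single-idempotent relations of Theorem~\ref{theo:peirce1} over the indices, together with the same sum-to-one constraint. For \eqref{eq:peirce:2}, take $u\in V_{ij}$ and $v\in V_{jk}$ with $i,j,k$ distinct: for $m\notin\{i,j,k\}$ both $u,v$ lie in $V(c_m,0)$ so $\jProd{u}{v}\in V(c_m,0)$; for $m=i$ we have $u\in V(c_i,\tfrac12)$, $v\in V(c_i,0)$, hence $\jProd{u}{v}\in V(c_i,\tfrac12)$, and symmetrically the $c_k$-eigenvalue is $\tfrac12$; since each component of $\jProd{u}{v}$ has eigenvalues summing to $1$, the $c_j$-eigenvalue is forced to $0$, so $\jProd{u}{v}\in V_{ik}$. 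Relation \eqref{eq:peirce:1} is identical, using $\jProd{V(c_i,\tfrac12)}{V(c_i,\tfrac12)}\subseteq V(c_i,1)+V(c_i,0)$ to push the two relevant eigenvalues into $\{0,1\}$ summing to $1$; and \eqref{eq:peirce:3} follows because the four distinct indices of $\{i,j\}\cup\{k,l\}$ would each force eigenvalue $\tfrac12$, contradicting the sum-to-one constraint, so $\jProd{u}{v}=0$.
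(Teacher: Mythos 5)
Your proposal is correct. Note that the paper itself gives no proof of this statement: it is quoted as Theorem~IV.2.1 of the cited book by Faraut and Kor\'anyi, so there is no in-paper argument to compare against. What you wrote is essentially the classical proof from that reference: polarize the Jordan identity $L_y L_{y^2} = L_{y^2} L_y$ to get $2L_yL_{\jProd{x}{y}} + L_xL_{y^2} = 2L_{\jProd{x}{y}}L_y + L_{y^2}L_x$, specialize to $y = c_i$, $x = c_j$ to conclude that the self-adjoint operators $L_{c_1},\dots,L_{c_r}$ commute, simultaneously diagonalize them, and use $\sum_i L_{c_i} = L_e = \mathrm{id}$ to see that the only admissible eigenvalue tuples in $\{0,\tfrac12,1\}^r$ are those with a single $1$ or exactly two $\tfrac12$'s; the multiplication rules then follow by combining the single-idempotent rules of Theorem~\ref{theo:peirce1} with the same sum-to-one constraint. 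All the steps check out, including the use of primitivity (via the spectral theorem inside the subalgebra $V(c_i,1)$, which Theorem~\ref{theo:peirce1} guarantees is a Euclidean Jordan algebra with unit $c_i$) to get $V_{ii} = \Re c_i$. One cosmetic remark: in your argument for \eqref{eq:peirce:3} you speak of ``the four distinct indices,'' which tacitly assumes $i \neq j$ and $k \neq l$; the degenerate cases $V_{ii}$ or $V_{kk}$ are even easier, since there the product dies immediately from $\jProd{V(c,1)}{V(c,0)} = \{0\}$, so this is a phrasing issue rather than a gap.
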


The algebra $(\jAlg, \jProd{}{})$ is said to be \emph{simple} if there is no way 
to write $\jAlg = V \oplus W$, where $V$ and $W$ are both nonzero subalgebras of 
$\jAlg$. We will say that $\stdCone$ is \emph{simple} if it is the cone of squares of a 
simple algebra. It turns out that every Euclidean Jordan algebra can be decomposed as  
a direct sum of simple Euclidean Jordan algebras, which then induces a 
decomposition of $\stdCone$ in simple symmetric cones. This means that we can 
write 
\begin{align*}
\jAlg & = \jAlg _1 \oplus \cdots \oplus \jAlg _\ell,\\
\stdCone & = \stdCone _1 \oplus \cdots \oplus \stdCone _\ell,
\end{align*}
where the $\jAlg _i$'s are simple Euclidean Jordan algebras of rank $r_i$ and 
$\stdCone _i$ is the cone of squares of~$\jAlg _i$. Note that orthogonality 
expressed by this decomposition is not only with respect to the inner product $\inner{\cdot}{\cdot}$ 
but also with respect to the Jordan product $\jProd{}{}$.
There is a classification of 
the simple Euclidean Jordan algebras and, up to isomorphism, they fall in four infinite families and a single exceptional case. 

Due to the decomposition results, some articles only deal with simple  Jordan algebras (such as \cite{SS08,FB06}), while others prove results in full generality (such as \cite{Baes04}). The extension from the simple case to the general case is usually straightforward but must be done carefully.

We recall the following properties of $\stdCone$. The results follow 
from various propositions that appear in 
\cite{FK94}, such as Proposition~III.2.2 and Exercise 3 in Chapter III. See also Equation (10) in 
\cite{Sturm2000}. 
\begin{proposition}\label{prop:aux}
Let $y,w \in \jAlg$.
\begin{enumerate}[label=({\it \roman*})]
	\item $y \in \stdCone$ if and only if the eigenvalues of $y$ are nonnegative. \label{paux:1}
	\item $y \in \interior \stdCone$ if and only if the eigenvalues of $y$ are positive. \label{paux:2}
	\item $y \in \interior \stdCone$ if and only if $\inner{y}{\jProd{w}{w}} > 0$ for all 
	nonzero $w \in \jAlg$. \label{paux:3}
	\item Suppose $y,w \in \stdCone$. Then, $\jProd{y}{w} = 0$ if and only if 
	$\inner{y}{w} = 0$. \label{paux:4}
\end{enumerate}
\end{proposition}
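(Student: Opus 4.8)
The plan is to obtain all four items from the Spectral Theorem (Theorem~\ref{theo:spec}) together with self-duality of $\stdCone$, proving them in the order~\ref{paux:1}, then~\ref{paux:2} and~\ref{paux:3}, and finally~\ref{paux:4}. For item~\ref{paux:1} the reverse implication is constructive: given $y=\sum_{i=1}^r\eig_i c_i$ with all $\eig_i\geq0$, I would set $z=\sum_i\sqrt{\eig_i}\,c_i$ and use the frame relations $\jProd{c_i}{c_j}=0$ for $i\neq j$ and $\jProd{c_i}{c_i}=c_i$ to verify $\jProd zz=y$, so $y\in\stdCone$. For the forward implication I would test $y$ against the frame: each $c_i=\jProd{c_i}{c_i}$ lies in $\stdCone$, so by self-duality $\inner{y}{c_i}\geq0$, while the computation $\jProd y{c_i}=\eig_i c_i$ and $\tr(c_i)=1$ gives $\inner y{c_i}=\eig_i$, whence $\eig_i\geq0$.

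I would deduce~\ref{paux:2} and~\ref{paux:3} from~\ref{paux:1}. The same computation $\inner y{c_i}=\eig_i$ shows that $y\in\interior\stdCone$ forces every $\eig_i>0$ (test against each $c_i\in\stdCone$), giving the forward half of~\ref{paux:2}; the reverse half is the only place a genuine topological argument is needed, and I would get it from the variational formula $\eig_{\min}(u)=\min\{\inner u c\mid c\text{ a primitive idempotent}\}$. This holds because $\inner u c=\sum_i\eig_i\inner{c_i}{c}$ is a convex combination of the eigenvalues of $u$, the weights $\inner{c_i}{c}$ being nonnegative by self-duality and summing to $\inner e c=\tr(c)=1$. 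As a minimum of $1$-Lipschitz linear functionals $\eig_{\min}$ is $1$-Lipschitz, so if all $\eig_i\geq m>0$ then $\eig_{\min}(y+z)\geq m-\norm z>0$ whenever $\norm z<m$, placing a whole ball around $y$ inside $\stdCone$ by~\ref{paux:1}. For~\ref{paux:3} I would first note that $\jProd ww=0$ implies $\norm w^2=\inner w w=\inner{\jProd ww}{e}=0$, so $\{\jProd ww\mid w\neq0\}=\stdCone\setminus\{0\}$; then~\ref{paux:3} is just~\ref{paux:2} re-expressed, since for $y$ with all $\eig_i>0$ one has $\inner{y}{\jProd ww}\geq m\,\tr(\jProd ww)=m\norm w^2>0$, while testing with $w=c_i$ recovers $\eig_i>0$.

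The crux is the reverse implication of~\ref{paux:4}, where $\inner yw=0$ must be upgraded to $\jProd yw=0$ (the forward implication is immediate from $\inner yw=\tr(\jProd yw)$). The key observation I would use is that every $y\in\stdCone$ induces a self-adjoint positive semidefinite operator $L_y$: by the third listed property of the product, $\inner{L_y u}{u}=\inner{\jProd yu}{u}=\inner y{\jProd uu}\geq0$ by self-duality. I would then reduce to idempotents. Writing $y=\sum_i\eig_i c_i$ with $\eig_i\geq0$, the identity $0=\inner yw=\sum_i\eig_i\inner{c_i}{w}$ is a sum of nonnegative terms, so $\inner{c_i}{w}=0$ whenever $\eig_i>0$. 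For such an index, applying the operator argument to $L_w\succeq0$ gives $\inner{L_w c_i}{c_i}=\inner w{\jProd{c_i}{c_i}}=\inner w{c_i}=0$, and a self-adjoint positive semidefinite operator satisfying $\inner{Lv}{v}=0$ must kill $v$ outright, so $L_w c_i=\jProd w{c_i}=0$. Summing, $\jProd yw=\sum_{i:\eig_i>0}\eig_i\jProd{c_i}w=0$.

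The main obstacle is precisely this last step: recognizing that the \emph{linear} condition $\inner yw=0$ secretly encodes the \emph{bilinear} condition $\jProd yw=0$. What unlocks it is passing to the operators $L_y,L_w$ and exploiting the idempotent relation $\jProd{c_i}{c_i}=c_i$, which converts $\inner w{\jProd{c_i}{c_i}}$ back into $\inner w{c_i}$; without first reducing to idempotents the naive test $\inner{L_w y}{y}=\inner w{y^2}$ need not vanish, so the reduction is essential. The only other delicate point is the openness step in~\ref{paux:2}, handled by the $1$-Lipschitz continuity of $\eig_{\min}$ (equivalently, one could invoke the standard fact that the interior of a self-dual cone is exactly its set of strictly positive functionals).
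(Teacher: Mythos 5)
Your proof is correct. It is worth noting that the paper itself offers no proof of Proposition~\ref{prop:aux}: it simply cites Proposition~III.2.2 and Exercise~3 of Chapter~III in \cite{FK94} and Equation~(10) in \cite{Sturm2000}, treating all four items as known facts. Your argument is therefore a genuinely different, self-contained route, resting only on the Spectral Theorem (Theorem~\ref{theo:spec}), self-duality, and the trace form $\inner{x}{y}=\tr(\jProd{x}{y})$. Its two key ingredients are (a) the variational formula $\eig_{\min}(u)=\min\{\inner{u}{c}\mid c\ \text{a primitive idempotent}\}$, whose consequence that $\eig_{\min}$ is $1$-Lipschitz settles the openness half of item~(ii) --- precisely the piece of eigenvalue machinery the paper alludes to in the introduction but never develops; and (b) for the nontrivial half of item~(iv), the restriction to indices with $\eig_i>0$ followed by the observation that $L_w$ is self-adjoint and positive semidefinite whenever $w\in\stdCone$, so that $\inner{L_w c_i}{c_i}=0$ forces $\jProd{w}{c_i}=0$; this is the classical operator-theoretic proof of complementarity underlying the cited Equation~(10) of \cite{Sturm2000}, here reconstructed from first principles. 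You do lean on two standard facts without proof --- that interior points of $\stdCone$ pair strictly positively with nonzero elements of $\stdCone^*=\stdCone$ (forward half of (ii); avoidable by noting that $\eig_i=0$ would give $y-tc_i\notin\stdCone$ for all $t>0$ by item~(i)), and that a self-adjoint positive semidefinite $L$ with $\inner{Lv}{v}=0$ satisfies $Lv=0$ (Cauchy--Schwarz applied to the form $(a,b)\mapsto\inner{La}{b}$) --- but both are correctly invoked. Your approach buys self-containment at the cost of length; the paper's citation buys brevity at the cost of opacity.
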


%
%
%
%

From item \ref{paux:4} of Proposition~\ref{prop:aux}, we have that if
$c $ and $c'$ are two idempotents belonging to distinct blocks, we also 
have $\jProd{c}{c'} = 0$ in addition to $\inner{c}{c'} = 0$. Since this holds 
for all idempotents, we have $\jProd{\jAlg _i}{\jAlg _j} = 0$, whenever 
$i \neq j$. 

Due to Proposition~\ref{prop:aux}, if $y \in \stdCone$, 
then the eigenvalues of $y$ are nonnegative, and so we can 
define the square root of $y$ as 
\begin{equation*}
\sqrt{y} = \sum _{i = 1}^r \sqrt{\eig _i} c_i,
\end{equation*}
where $\{c_1, \ldots, c_r\}$ is a Jordan frame for $y$.

\subsection{Projection on a symmetric cone}
Denote by $\projC{\stdCone}$ the orthogonal projection on $\stdCone$. 
Given $y \in \jAlg$, $\projC{\stdCone}(y)$ satisfies 
$$
\projC{\stdCone}(y) = \argmin  _{z \in \stdCone} \norm{y-z}.
$$
In analogy to the case of a positive semidefinite cone, to project $x$ on 
$\stdCone$ it is enough to zero the negative eigenvalues of 
$x$. We register this well-known fact as a proposition.

\begin{proposition}\label{prop:proj}
Let $y \in \jAlg$ and consider a Jordan decomposition of $y$
$$
y = \sum _{i = 1}^r \eig _i c_i,
$$
where $\{c_1, \ldots, c_r\}$ is a Jordan frame for $y$. Then, its projection is given by
\begin{equation}
\projC{\stdCone}(y) = \sum _{i = 1}^r \max (\eig _i,0) c_i. \label{eq:proj}
\end{equation}
\end{proposition}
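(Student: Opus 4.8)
The plan is to verify directly that the proposed point satisfies the variational characterization of the orthogonal projection onto the closed convex cone $\stdCone$. Recall (Moreau's decomposition) that for a closed convex cone $K$ with polar cone $K^\circ \doteq \{w \mid \inner{w}{z} \leq 0, \ \forall z \in K\}$, a point $\bar z$ equals $\projC{K}(y)$ if and only if the three conditions $\bar z \in K$, $y - \bar z \in K^\circ$ and $\inner{y-\bar z}{\bar z} = 0$ all hold. Since $\stdCone$ is self-dual, its polar satisfies $\stdCone^\circ = -\stdCone^* = -\stdCone$, so the middle condition becomes $\bar z - y \in \stdCone$. Everything then reduces to checking three conditions on eigenvalue signs, all of which are covered by Proposition~\ref{prop:aux}.

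Concretely, I would set $\bar z \doteq \sum_{i=1}^r \max(\eig_i,0)\, c_i$ and compute the residual in the same Jordan frame, namely
$$
y - \bar z = \sum_{i=1}^r \bigl(\eig_i - \max(\eig_i,0)\bigr)\, c_i = \sum_{i=1}^r \min(\eig_i,0)\, c_i .
$$
Membership $\bar z \in \stdCone$ is immediate from item~\ref{paux:1} of Proposition~\ref{prop:aux}, as all eigenvalues $\max(\eig_i,0)$ are nonnegative. For the polarity condition I would note that $\bar z - y = \sum_{i=1}^r \max(-\eig_i,0)\, c_i$ likewise has nonnegative eigenvalues, so $\bar z - y \in \stdCone$ by the same item, which is exactly $y - \bar z \in \stdCone^\circ$. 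Finally, for orthogonality I would expand $\inner{y - \bar z}{\bar z}$ in the Jordan frame: since $\jProd{c_i}{c_j} = 0$ for $i \neq j$ gives $\inner{c_i}{c_j} = \tr(\jProd{c_i}{c_j}) = 0$, only the diagonal terms remain, and each carries the factor $\min(\eig_i,0)\max(\eig_i,0) = 0$. Hence $\inner{y-\bar z}{\bar z} = 0$.

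The only real subtlety is the bookkeeping with the cone characterization — in particular, remembering that self-duality lets us replace the abstract polar cone $\stdCone^\circ$ by $-\stdCone$, which is what converts the projection conditions into sign conditions on eigenvalues that Proposition~\ref{prop:aux} handles directly; the rest is routine. Once the three conditions are confirmed, uniqueness of the projection onto the closed convex cone $\stdCone$ yields $\projC{\stdCone}(y) = \bar z$, which is precisely \eqref{eq:proj}.
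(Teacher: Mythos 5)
Your proof is correct, but it follows a genuinely different route from the paper. You verify the standard variational characterization of the projection onto a closed convex cone (the Moreau conditions: $\bar z \in \stdCone$, $y - \bar z \in \stdCone^\circ$, $\inner{y-\bar z}{\bar z}=0$), using self-duality to turn the polar condition into a sign condition on eigenvalues; all three checks then reduce to Proposition~\ref{prop:aux}(i) and the orthogonality $\inner{c_i}{c_j}=0$ of the Jordan frame. The paper instead argues by direct minimization: it parametrizes an arbitrary $z \in \stdCone$ via the Peirce decomposition (Theorem~\ref{theo:peirce2}) as $z = \sum_i \alpha_i c_i + \sum_{i<j} v_{ij}$, observes that self-duality forces $\alpha_i \geq 0$, expands $\norm{y-z}^2$ using the mutual orthogonality of the $V_{ij}$, and reads off that the minimum is attained at $v_{ij}=0$, $\alpha_i = \max(\eig_i,0)$. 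The trade-off: your argument needs only the spectral theorem plus an external convex-analysis fact about projections (which the paper does cite elsewhere, in Lemma~\ref{lemma:moreau}), and it sidesteps the Peirce machinery entirely; the paper's argument is self-contained as an optimization computation but leans on the heavier second Peirce decomposition to describe how a general element of $\stdCone$ sits relative to the frame of $y$. Both are complete; yours is arguably the more economical given that the Moreau characterization is already in the paper's toolkit.
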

\begin{proof}
Let $z \in \stdCone$. In view of Theorem~\ref{theo:peirce2}, we can write
$$
z = \sum _{i=1}^r v_{ii} + \sum _{1\leq i < j \leq r} v_{ij},
$$
where $v_{ij} \in V_{ij}$ for all $i$ and $j$. As $V_{ii} = \{\alpha c_i \mid \alpha \in \Re\}$, we have 
$$
z = \sum _{i=1}^r \alpha _i c_i + \sum _{1\leq i < j \leq r} v_{ij},
$$
for some constants $\alpha _i \in \Re$.
Recall that the subspaces $V_{ij}$ are all orthogonal among themselves and that $\stdCone$ is 
self-dual. Then, since $c_i \in \stdCone$ and $\inner{z}{c_i} \geq 0$, 
we have  $\alpha _i \geq 0$ for all $i$. 
Furthermore,  we have
$$
\norm{y-z}^2 = \sum _{i=1}^r (\eig_i -\alpha _i )^2 \norm{c_i}^2 + \sum _{1\leq i < j \leq r} \norm{v_{ij}}^2.
$$
Therefore, if we wish to minimize $\norm{y-z}^2$, the best we can do is to set each $v_{ij}$ to zero 
and each $\alpha _i$ to $\max (\eig _i, 0)$. This shows that \eqref{eq:proj} holds.
\end{proof}

In analogy to the symmetric matrices, we will use the following notation:
$$
\projC{\stdCone}(y) = \proj{y}.
$$

The following observation will also be helpful.
\begin{lemma}\label{lemma:moreau}
	Let $\stdCone$ be a symmetric cone and $v \in \jAlg$. Then,
	$$
	v - \proj{v} = - \proj{-v}.
	$$ 
\end{lemma}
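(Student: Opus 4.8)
The plan is to reduce everything to a one-dimensional identity about real numbers by exploiting the explicit projection formula from Proposition~\ref{prop:proj}. First I would take a spectral decomposition $v = \sum_{i=1}^r \eig_i c_i$ furnished by Theorem~\ref{theo:spec}, where $\{c_1, \ldots, c_r\}$ is a Jordan frame for $v$. The crucial observation is that the very same Jordan frame also decomposes $-v$, since $-v = \sum_{i=1}^r (-\eig_i) c_i$ and the $c_i$ remain primitive idempotents summing to $e$; this is just a spectral decomposition of $-v$ with eigenvalues $-\eig_i$.

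With both decompositions in hand, I would apply Proposition~\ref{prop:proj} twice. For $v$ it gives $\proj{v} = \sum_{i=1}^r \max(\eig_i, 0)c_i$, and for $-v$ it gives $\proj{-v} = \sum_{i=1}^r \max(-\eig_i, 0)c_i$. Subtracting the first from $v$ and negating the second, the claim $v - \proj{v} = -\proj{-v}$ collapses, coefficient by coefficient along the orthogonal frame $\{c_1, \ldots, c_r\}$, to the scalar identity
\[
\eig_i - \max(\eig_i, 0) = -\max(-\eig_i, 0),
\]
which is simply the statement that $\min(\alpha, 0) = -\max(-\alpha, 0)$ for every real $\alpha$.

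The only point requiring a little care---and the closest thing to an obstacle---is justifying that Proposition~\ref{prop:proj} may be applied to $-v$ using the frame inherited from $v$. This is immediate because the proposition is phrased for an arbitrary Jordan decomposition rather than a canonical one, so no appeal to the uniqueness clause of the Spectral Theorem is needed; I only use existence of the decomposition displayed above. Everything else is a routine linear combination over a fixed orthogonal family of idempotents, so the proof is short once the shared-frame observation is made.
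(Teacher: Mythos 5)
Your proof is correct, but it takes a different route from the paper. The paper's own proof is abstract convex analysis: it invokes the Moreau decomposition $v - \proj{v} = P_{\stdCone^\circ}(v)$ (citing Theorem~3.2.5 in \cite{HL93}) and then uses self-duality, $\stdCone^\circ = -\stdCone$, to conclude $P_{\stdCone^\circ}(v) = -\proj{-v}$ in two lines. You instead work entirely inside the Jordan-algebra machinery: fix a Jordan frame for $v$, observe that the same frame with negated eigenvalues decomposes $-v$, apply Proposition~\ref{prop:proj} twice, and reduce the claim to the scalar identity $\alpha - \max(\alpha,0) = -\max(-\alpha,0)$. Your key observation --- that Proposition~\ref{prop:proj} is stated for an arbitrary Jordan decomposition, so the frame inherited from $v$ may legitimately be reused for $-v$ --- is exactly right and is what makes the argument go through without appealing to any uniqueness in the Spectral Theorem. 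The trade-off: your argument is self-contained, relying only on results already proved in the paper, whereas the paper's argument needs an external citation but is shorter and reveals that the identity holds for \emph{any} self-dual closed convex cone, with the symmetric-cone structure playing no role beyond self-duality.
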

\begin{proof}
	The Moreau decomposition (see, e.g., Theorem 3.2.5 in \cite{HL93}) tells us that 
	$v - \proj{v} = P_{\stdCone^\circ}(v)$,
	where $\stdCone^\circ$ is the polar cone of $\stdCone$. As $\stdCone$ is self-dual, we have 
	$\stdCone^\circ = -\stdCone$. Therefore, $P_{\stdCone^\circ}(v) = -P_{\stdCone}(-v) = - \proj{-v}$.
\end{proof}

\subsection{The Karush-Kuhn-Tucker conditions}
First, we define the Lagrangian function $L \colon
\Re^n \times \Re^m \times \jAlg \to \Re$ associated with problem~\eqref{eq:sym} as
\begin{displaymath}
L(x, \mu, \lambda ) \doteq f(x) - \inner{h(x)}{\mult} - \inner{g(x)}{\lambda}.
\end{displaymath}
We say that $(x,\mult,\lambda) \in \Re^n \times \Re^m \times \jAlg$ is a \emph{Karush-Kuhn-Tucker} (KKT) triple of
problem~\eqref{eq:sym} if the following conditions are satisfied:
\begin{align}
\grad f(x) - \jac{h}(x)^* \mult - \jac{g}(x)^* \lambda =  0, \label{eq:kkt_sym.1} \tag{P1.1}\\ 
\lambda \in \stdCone, \label{eq:kkt_sym.2} \tag{P1.2}\\ 
g(x) \in \stdCone, \label{eq:kkt_sym.3} \tag{P1.3}\\ 
\jProd{\lambda}{g(x)}  = 0, \label{eq:kkt_sym.4} \tag{P1.4}\\
h(x)  = 0, \label{eq:kkt_sym.5} \tag{P1.5}
\end{align}
where $\grad f$ is the gradient of $f$,  $\jac{g}$ is the Jacobian of $g$ and $\jac{g}^*$ denotes the adjoint of~$\jac{g}$. Usually, instead of 
\eqref{eq:kkt_sym.4}, we would have $\inner{\lambda}{g(x)} = 0$, but in view of 
item \ref{paux:4} of Proposition~\ref{prop:aux}, they are equivalent.
Note also that \eqref{eq:kkt_sym.1} is equivalent to
$\nabla L_x(x,\mu,\lambda) = 0$, where $\nabla L_x$ denotes the gradient
of $L$ with respect to $x$.

 We also have the following definition.
\begin{definition}\label{def:strict}
	If $(x,\mult, \lambda) \in \Re^n \times \Re^m \times \jAlg$ is a KKT triple of
	\eqref{eq:sym} such that
	\begin{displaymath}
	\matRank g(x) + \matRank \lambda = r,
	\end{displaymath}
	then $(x,\lambda)$ is said to satisfy the \emph{strict
		complementarity} condition.
\end{definition}

As for the equality constrained NLP problem~\eqref{eq:slack}, we observe
that $(x,y,\mult, \lambda) \in \Re^n \times \jAlg\times \Re^m \times \jAlg$ is a KKT quadruple
if the conditions below are satisfied:
\begin{align*}
\grad_{(x,y)} \mathcal{L}(x,y,\mult, \lambda) & = 0, \\
h(x) & = 0,\\
g(x) - \jProd{y}{y} & = 0,
\end{align*}
where $\mathcal{L} \colon \Re^n \times \jAlg \times \Re^m \times \jAlg \to \Re$ is
the Lagrangian function associated with~\eqref{eq:slack}, which is given by
\begin{displaymath}
\mathcal{L}(x,y,\mult,\lambda) \doteq f(x) - \inner{h(x)}{\mult}-\inner{g(x) - y \circ
	y}{\lambda}
\end{displaymath}
and $\grad_{(x,y)}  \mathcal{L}$ denotes the gradient of $\mathcal{L}$ with respect to $(x,y)$.

We can then write the KKT conditions for~\eqref{eq:slack} as
\begin{align}
  \grad f(x) -  \jac{h}(x)^* \mult -  \jac{g}(x)^* \lambda = 0, \label{eq:kkt_slack.1} 
  \tag{P2.1}\\
  \jProd{\lambda}{y} = 0, \label{eq:kkt_slack.2} \tag{P2.2}\\
  g(x) - \jProd{y}{y} = 0, \label{eq:kkt_slack.3} \tag{P2.3}\\
    h(x)  = 0. \label{eq:kkt_slack.4} \tag{P2.4}
\end{align}

Writing the conditions for~\eqref{eq:sym} and \eqref{eq:slack}, we see that although 
they are equivalent problems, the KKT conditions are slightly different. In fact, for 
\eqref{eq:slack}, it is not required that $\lambda$ belongs to $\stdCone$ and this accounts for 
a great part of the difference between both formulations.

For \eqref{eq:sym}, we say that the \emph{Mangasarian-Fromovitz constraint 
qualification} (MFCQ) holds at a point $x$ if $\jac{h}(x)$ is surjective and there exists some $d \in \Re^n$ such that 
\begin{align*}
\jac{h}(x)d &= 0,\\
g(x) + \jac{g}(x)d &\in \interior \stdCone,
\end{align*}
where $\interior \stdCone $ denotes the interior of $\stdCone$. See, for instance, 
Equation~(2.190) in \cite{BS00}.
If $x$ is a local minimum for \eqref{eq:sym}, MFCQ ensures the existence of 
a pair of Lagrange multipliers $(\mu,\lambda)$ and that the set of multipliers is bounded.

We also can define a nondegeneracy condition  as follows.
\begin{definition}\label{def:nondeg}
  Suppose that $x \in \stdCone$ is such that
\begin{align*}
\Re^m \times \jAlg = \mathrm{Im}\,\begin{pmatrix}
\jac{h}(x)\\
\jac{g}(x)
\end{pmatrix} + \{0\}^m\times \lineality \tanCone{g(x)}{\stdCone}, 
\end{align*}
where $\mathrm{Im}\,\begin{psmallmatrix}
	\jac{h}(x)\\
	\jac{g}(x)
\end{psmallmatrix}$ denotes the image of the linear map
that takes $d \in \Re^n$ to $\begin{psmallmatrix}
	\jac{h}(x)d\\
	\jac{g}(x)d
	\end{psmallmatrix} \in \Re^m \times \jAlg $,
	$\{0\}^m$ denotes the zero subspace in $\Re^m$, $\tanCone{g(x)}{\stdCone}$ denotes
  the tangent cone of $\stdCone$ at $g(x)$, and  $\lineality \tanCone{g(x)}{\stdCone}$ is the lineality space of $\tanCone{g(x)}{\stdCone}$, i.e., $\lineality \tanCone{g(x)}{\stdCone} = \tanCone{g(x)}{\stdCone} \cap -\tanCone{g(x)}{\stdCone}$.
  Then, $x$ is said to be \emph{nondegenerate}.
\end{definition}

This definition is related to the transversality condition introduced by Shapiro in \cite{shapiro97}, see Definition 4 therein. 
Furthermore, Definition~\ref{def:nondeg} is a special case of a more general definition of nondegeneracy discussed in the work by Bonnans and Shapiro, see Section 4.6.1 of \cite{BS00}.

For \eqref{eq:slack}, we say that the \emph{linear independence constraint qualification} (LICQ) is satisfied at a point $(x,y)$ if the gradients of the constraints are linearly independent.

\subsection{Second order conditions for \eqref{eq:slack}}
For~\eqref{eq:slack}, we say that the second order sufficient condition (SOSC-NLP)  holds if 
\begin{displaymath}
    \inner{\grad _{(x,y)} ^2 \mathcal{L}(x,y,\mult,\lambda)(v,w)}{(v,w)} > 0,
\end{displaymath}
for every nonzero $(v,w) \in \Re^n \times \jAlg$ such that $\jac{g}(x)v - 2\jProd{y}{w} = 0$ and 
$\jac{h}(x)v = 0$, where $\grad _{(x,y)} ^2 \mathcal{L}$ denotes the Hessian of
$\mathcal{L}$ with respect to $(x,y)$. See 
\cite[Section 3.3]{Ber99} or \cite[Theorem 12.6]{NW99}. 
We can also present the SOSC-NLP in terms 
of the Lagrangian of \eqref{eq:sym}.

\begin{proposition}\label{prop:sosc_slack}
  Let $(x,y,\mult,\lambda) \in \Re^n \times \jAlg \times \Re^m \times \jAlg$ be a KKT quadruple of problem~\eqref{eq:slack}. The
  SOSC-NLP holds if
  \begin{equation}
    \inner{\grad _x ^2 L(x,\mult,\lambda)v}{v} + 2\inner{\jProd{w}{w}}{\lambda} > 0  \label{eq:sosc_nlp.1}
  \end{equation}
  for every nonzero $(v,w) \in \Re^n \times \jAlg$ such that $\jac{g}(x)v - 2\jProd{y}{w} = 0$ and 
  $\jac{h}(x)v = 0$.
\end{proposition}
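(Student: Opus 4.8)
The plan is to show that the quadratic form in the SOSC-NLP is \emph{literally the same} as the left-hand side of \eqref{eq:sosc_nlp.1}; once this identity is established the result is immediate, since both conditions are tested over the same critical subspace. In fact this will show the two conditions are equivalent, though only sufficiency is claimed. The first step is to decompose the augmented Lagrangian. Recalling the definitions,
\begin{displaymath}
\mathcal{L}(x,y,\mult,\lambda) = f(x) - \inner{h(x)}{\mult} - \inner{g(x)}{\lambda} + \inner{\jProd{y}{y}}{\lambda} = L(x,\mult,\lambda) + \inner{\jProd{y}{y}}{\lambda}.
\end{displaymath}
When we differentiate with respect to $(x,y)$ the multipliers $\mult,\lambda$ are held fixed, so the first summand depends only on $x$ and the second only on $y$. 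Consequently $\grad_{(x,y)}^2 \mathcal{L}$ is block diagonal: its $(x,x)$-block is $\grad_x^2 L(x,\mult,\lambda)$, the mixed blocks vanish, and the $(y,y)$-block is the Hessian of $\phi(y) \doteq \inner{\jProd{y}{y}}{\lambda}$.

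The second step is to compute this $(y,y)$-block. Because the Jordan product $\jProd{}{}$ is bilinear, the map $y \mapsto \jProd{y}{y}$ is a homogeneous quadratic map, exactly analogous to $t \mapsto t^2$. Differentiating $\phi$ along a direction $w$ and using commutativity (property~1 of the Jordan product) gives $D\phi(y)[w] = 2\inner{\jProd{y}{w}}{\lambda}$, and a second differentiation along $w$ yields $D^2\phi(y)[w,w] = 2\inner{\jProd{w}{w}}{\lambda}$, which is independent of $y$. Hence the $(y,y)$ quadratic form is $w \mapsto 2\inner{\jProd{w}{w}}{\lambda}$.

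Combining the two steps, for every $(v,w) \in \Re^n \times \jAlg$ we obtain
\begin{displaymath}
\inner{\grad_{(x,y)}^2 \mathcal{L}(x,y,\mult,\lambda)(v,w)}{(v,w)} = \inner{\grad_x^2 L(x,\mult,\lambda)v}{v} + 2\inner{\jProd{w}{w}}{\lambda}.
\end{displaymath}
Since the linearized constraints $\jac{g}(x)v - 2\jProd{y}{w} = 0$ and $\jac{h}(x)v = 0$ that define the critical subspace are identical in the two statements, requiring the right-hand side to be positive for all nonzero admissible $(v,w)$ — that is, condition \eqref{eq:sosc_nlp.1} — is precisely the SOSC-NLP, which establishes the claim. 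I do not anticipate a genuine obstacle here; the only point requiring care is the second-differential computation of the slack term $\inner{\jProd{y}{y}}{\lambda}$, where one must invoke bilinearity and commutativity of $\jProd{}{}$ to get the factor of $2$ and to confirm that the Hessian is constant in $y$.
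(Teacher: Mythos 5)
Your proposal is correct and follows essentially the same route as the paper's own proof: decompose $\mathcal{L}(x,y,\mult,\lambda) = L(x,\mult,\lambda) + \inner{\jProd{y}{y}}{\lambda}$, observe that the Hessian in $(x,y)$ splits blockwise, and compute the Hessian of the slack term to get $2\inner{\jProd{w}{w}}{\lambda}$. The only cosmetic difference is that the paper expresses that Hessian as the operator $2L_\lambda$ (invoking the associativity property $\inner{\jProd{y}{z}}{w} = \inner{y}{\jProd{z}{w}}$ of the Euclidean Jordan algebra), whereas you evaluate the quadratic form directly by expanding $\jProd{(y+tw)}{(y+tw)}$; both yield the same identity, and your observation that the two conditions are in fact equivalent is consistent with the paper's argument.
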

\begin{proof}
Note that we have
\begin{align*}
\grad _{(x,y)} ^2 \mathcal{L}(x,y,\mult,\lambda) & = \grad _{(x,y)} ^2 [L(x,\mult,\lambda) + \inner{\jProd{y}{y}}{\lambda} ]. 
 \end{align*}
 Therefore, 
$$
\inner{\grad _{(x,y)} ^2 \mathcal{L}(x,y,\mult,\lambda)(v,w)}{(v,w)} = \inner{\grad _x ^2 L(x,\mult,\lambda)v}{v} + \inner{\grad _y^2 \inner{\jProd{y}{y}}{\lambda} w }{w}.$$
Due to the fact that the underlying algebra is Euclidean, we 
have  $ \grad _y \inner{\jProd{y}{y}}{\lambda} = 2 \jProd{y}{\lambda} $ and 
$\grad _y^2 \inner{\jProd{y}{y}}{\lambda} = 2L_\lambda$, where 
we recall that $L_\lambda$ is the linear operator satisfying $L_{\lambda}(z) = \jProd{\lambda }{z}$ for every $z$. We then conclude 
that 
$$\inner{\grad _y^2 \inner{\jProd{y}{y}}{\lambda} w }{w} = \inner{w}{2L_\lambda(w)} = 2\inner{\jProd{w}{w}}{\lambda},$$
which implies that \eqref{eq:sosc_nlp.1} holds.
\end{proof}

Similarly, we have the following second order  necessary condition (SONC). Note that we require the 
LICQ to hold.

\begin{proposition}\label{prop:sonc_slack}
Let $(x,y)$ be a local minimum for \eqref{eq:slack} 
and  $(x,y,\mult,\lambda) \in \Re^n \times \jAlg \times \Re^m \times \jAlg$ be a KKT quadruple such 
that LICQ holds. Then, the following SONC-NLP holds: 
\begin{equation}
    \inner{\grad _x ^2 L(x,\mult,\lambda)v}{v} + 2\inner{\jProd{w}{w}}{\lambda} \geq 0 \label{eq:sonc_nlp}
  \end{equation}
  for every $(v,w) \in \Re^n \times \jAlg$ such that $\jac{g}(x)v - 2\jProd{y}{w} = 0$ and $\jac{h}(x)v = 0$.
\end{proposition}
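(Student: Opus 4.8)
The plan is to treat \eqref{eq:slack} as precisely what it is---an ordinary smooth equality-constrained nonlinear program in the variable $(x,y) \in \Re^n \times \jAlg$---and then invoke the textbook second order necessary condition for such problems. Concretely, I would collect the constraints into a single map $c(x,y) \doteq (h(x),\, g(x) - \jProd{y}{y})$ taking values in $\Re^m \times \jAlg$. Its derivative at $(x,y)$ sends $(v,w)$ to $(\jac{h}(x)v,\, \jac{g}(x)v - 2\jProd{y}{w})$, where the factor $2$ and the appearance of $\jProd{y}{w}$ come from differentiating the quadratic map $y \mapsto \jProd{y}{y}$ (this uses the symmetry of the Jordan product). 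Hence the null space of the constraint Jacobian at $(x,y)$ is exactly the set of pairs $(v,w)$ with $\jac{h}(x)v = 0$ and $\jac{g}(x)v - 2\jProd{y}{w} = 0$, which is precisely the set of admissible directions appearing in the statement.

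With this identification in place, I would apply the classical second order necessary condition for equality-constrained NLPs under LICQ (for instance \cite[Theorem 12.5]{NW99} or \cite[Section 3.3]{Ber99}). Since LICQ holds at the local minimum $(x,y)$, the Lagrange multiplier is unique, so it must coincide with the given $(\mult,\lambda)$; the cited theorem then guarantees that the Hessian of $\mathcal{L}$ with respect to $(x,y)$ is positive semidefinite on that null space, i.e.,
$$\inner{\grad_{(x,y)}^2 \mathcal{L}(x,y,\mult,\lambda)(v,w)}{(v,w)} \geq 0$$
for every $(v,w)$ satisfying the two linear conditions above.

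It then remains only to translate this quadratic form back into the language of \eqref{eq:sym}, and this is exactly the computation already carried out in the proof of Proposition~\ref{prop:sosc_slack}. Because $\mathcal{L}(x,y,\mult,\lambda) = L(x,\mult,\lambda) + \inner{\jProd{y}{y}}{\lambda}$ splits additively into a part depending only on $x$ and a part depending only on $y$, its Hessian is block diagonal, and the $y$-block equals $2L_\lambda$. Evaluating on $(v,w)$ and using property 3 of the Jordan product, namely $\inner{\jProd{\lambda}{w}}{w} = \inner{\lambda}{\jProd{w}{w}}$, yields
$$\inner{\grad_{(x,y)}^2 \mathcal{L}(x,y,\mult,\lambda)(v,w)}{(v,w)} = \inner{\grad_x^2 L(x,\mult,\lambda)v}{v} + 2\inner{\jProd{w}{w}}{\lambda},$$
so the positive semidefiniteness obtained above is precisely the desired inequality \eqref{eq:sonc_nlp}.

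I would not expect any serious obstacle: the entire content is the observation that \eqref{eq:slack} is a smooth equality-constrained NLP, to which the standard theory applies verbatim. The only points that deserve a line of care are (i) confirming that the derivative of $y \mapsto \jProd{y}{y}$ is $w \mapsto 2\jProd{y}{w}$, which relies on the commutativity of $\circ$, and (ii) noting that LICQ forces uniqueness of the multiplier, so that the given $(\mult,\lambda)$ is indeed the one for which the necessary condition holds. Everything else simply reuses the Hessian decomposition already established for the sufficient condition.
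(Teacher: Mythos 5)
Your proposal is correct and follows essentially the same route as the paper: the paper's proof likewise invokes the classical second order necessary condition for equality-constrained NLPs (citing \cite[Theorem 12.5]{NW99}) and then rewrites the Hessian of $\mathcal{L}$ exactly as in the proof of Proposition~\ref{prop:sosc_slack}. The extra details you spell out (the null space of the constraint Jacobian, multiplier uniqueness under LICQ, and the block-diagonal Hessian computation) are precisely the steps the paper leaves implicit.
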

\begin{proof}
  See \cite[Theorem 12.5]{NW99} for the basic format of the second order necessary condition
  for NLPs.   In order to express the condition in terms of the Lagrangian of 
  \eqref{eq:sym}, we proceed as in the proof of Proposition~\ref{prop:sosc_slack}.
\end{proof}

Under the assumption that $\jAlg$ is decomposed as $\jAlg = \jAlg _1 \oplus \cdots \oplus \jAlg _\ell $, the term  
$2\inner{\jProd{w}{w}}{\lambda} $ can be written as 
$$
\sum _{i = 1}^\ell 2\inner{\jProd{w_i}{w_i}}{\lambda _i},
$$
which brings Propositions \ref{prop:sosc_slack} and \ref{prop:sonc_slack} closer to the format described, for instance, in Lemma 3.2 of \cite{FF16}.

Furthermore, we observe that in Propositions \ref{prop:sosc_slack} and \ref{prop:sonc_slack} an extra term appears together 
with the Lagrangian of \eqref{eq:sym}. This term is connected with the so-called \emph{sigma-term} that appears in second order optimality conditions for optimization problems over general closed convex cones and plays an important role in the construction of no-gap optimality conditions. Classical references for second order optimality conditions for general cones include the works of Kawasaki \cite{Kawasaki88}, Cominetti \cite{Cominetti1990}, Bonnans, Cominetti and Shapiro \cite{BonnansCS}, and the book by Bonnans and Shapiro \cite{BS00}. 
In particular, we refer to sections 3.2, 3.3, 5.2 and 5.3 in \cite{BonnansCS}.

\section{A criterion for membership in $\stdCone$}\label{sec:crit}

We need the following auxiliary result.
\begin{proposition}\label{prop:trick}
Let $(\jAlg,\circ )$ be a \emph{simple} Euclidean Jordan algebra and let 
$c_i$ and $c_j$ be two orthogonal primitive idempotents. Then 
$$
V\left(c_i,\frac{1}{2}\right) \cap V\left(c_j,\frac{1}{2}\right) \neq \{0\}.
$$
\end{proposition}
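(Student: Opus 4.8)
The plan is to complete $c_i,c_j$ to a Jordan frame and then show that, in a simple algebra, the off-diagonal Peirce block indexed by $i,j$ cannot vanish. Since $c_i\circ c_j=0$, the sum $c_i+c_j$ is an idempotent, and by Theorem~\ref{theo:peirce1} the space $V(c_i+c_j,0)$ is a Euclidean Jordan subalgebra whose unit is $e-(c_i+c_j)$. Applying the Spectral Theorem (Theorem~\ref{theo:spec}) inside this subalgebra produces primitive idempotents orthogonal to $c_i,c_j$ that, together with $c_i,c_j$, form a full Jordan frame $c_1,\dots,c_r$. With respect to this frame, Theorem~\ref{theo:peirce2} identifies $V(c_i,\tfrac12)\cap V(c_j,\tfrac12)$ with the Peirce block $V_{ij}$, so the claim reduces to $V_{ij}\neq\{0\}$.

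Next I would introduce the \emph{connection graph} $G$ on the vertex set $\{1,\dots,r\}$, putting an edge between $p$ and $q$ exactly when $V_{pq}\neq\{0\}$, and argue that simplicity forces $G$ to be connected. Suppose not, and split the vertices into nonempty $I$ and $J$ with no edges between them. Setting $c_I\doteq\sum_{p\in I}c_p$, a short eigenvalue computation using $c_s\circ v_{pq}=\tfrac12 v_{pq}$ for $s\in\{p,q\}$ shows $V(c_I,\tfrac12)=\bigoplus_{p\in I,\,q\in J}V_{pq}$, which is $\{0\}$ precisely because there are no cross edges. Theorem~\ref{theo:peirce1} applied to the idempotent $c_I$ then yields $\jAlg=V(c_I,1)\oplus V(c_I,0)$ as an orthogonal sum of two nonzero subalgebras (nonzero since $c_I$ and $e-c_I$ lie in them) with $V(c_I,1)\circ V(c_I,0)=\{0\}$, contradicting simplicity. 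Hence $G$ is connected.

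The main obstacle is upgrading connectedness to the statement that \emph{every} off-diagonal block is nonzero, i.e. proving the transitivity rule: if $V_{pq}\neq\{0\}$ and $V_{qs}\neq\{0\}$ with $p,q,s$ distinct, then $V_{ps}\neq\{0\}$. Here I would run a transvection argument. Choosing $0\neq x\in V_{pq}$, the relation $\jProd{V_{ij}}{V_{ij}}\subseteq V_{ii}+V_{jj}$ puts $x^2\in\Re c_p\oplus\Re c_q$, and the Jordan identity (axiom~2) together with the Peirce calculus forces equal components, $x^2=\tfrac12\|x\|^2(c_p+c_q)$, so after scaling I may assume $x^2=c_p+c_q$. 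For $y\in V_{qs}$, relation~\eqref{eq:peirce:2} gives $x\circ y\in V_{ps}$ and $x\circ(x\circ y)\in V_{qs}$; the technical heart is the operator identity $(2L_x)^2y=y$ on $V_{qs}$, which I expect to be the most delicate computation and which follows from the identities generated by the Jordan axiom (cf. Lemma~IV.2.2 in \cite{FK94}). Granting it, $2L_x$ restricts to a linear isomorphism $V_{qs}\to V_{ps}$, so any nonzero $y\in V_{qs}$ yields a nonzero $x\circ y\in V_{ps}$, establishing transitivity.

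Finally I would combine the two facts. Since $G$ is connected there is a path $i=p_0,p_1,\dots,p_m=j$ with each $V_{p_tp_{t+1}}\neq\{0\}$, and iterating the transitivity rule along this path gives $V_{ij}\neq\{0\}$. Translating back through the identification of the first paragraph, this is exactly $V(c_i,\tfrac12)\cap V(c_j,\tfrac12)\neq\{0\}$, as desired.
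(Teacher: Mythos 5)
Your proposal is correct, and it is essentially the same approach as the paper's: the paper disposes of this proposition with a bare citation to Corollary IV.2.4 in \cite{FK94}, and your argument---completing $c_i,c_j$ to a Jordan frame, using simplicity to force connectedness of the graph of nonzero Peirce blocks, and upgrading connectedness to $V_{ij}\neq\{0\}$ via the operator identity $(2L_x)^2=\mathrm{id}$ on $V_{qs}$---is precisely the textbook proof of that corollary. The two facts you take for granted (that orthogonal primitive idempotents extend to a full Jordan frame, and Lemma IV.2.2 of \cite{FK94} itself) are standard, so your write-up is no less rigorous than the paper's outright citation, merely more explicit.
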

\begin{proof}
See Corollary IV.2.4 in \cite{FK94}.
%
%
\end{proof}

Since $\stdCone$ is self-dual, we have that $\lambda \in \stdCone$ if and only 
if $\inner{\lambda}{\jProd{w}{w}} \geq 0$ for all $w \in \jAlg$. Lemma~\ref{lemma:cone} refines this criterion for the 
case where $\stdCone$ is \emph{simple}.

\begin{lemma}\label{lemma:cone}
	Let $(\jAlg,\circ )$ be a \emph{simple} Euclidean Jordan algebra of rank $r$ and $\lambda \in \jAlg$.
	The following statements are equivalent:
	\begin{enumerate}[label=({\it \roman*})]
	  \item $\lambda \in \stdCone$.
	  \item There exists $y \in \jAlg$ such that $\jProd{y}{\lambda} = 0$ and
	  \begin{equation}\label{eq_lemma}
	  	\inner{\jProd{w}{w}}{ \lambda } > 0, 
	  	\end{equation}
	  	for every $w \in \jAlg$ satisfying $\jProd{y}{w} = 0$ and $w \neq 0$.
	\end{enumerate}
	
Moreover, any $y$ satisfying 	$(ii)$ is such that 
\begin{enumerate}[label=({\it \alph*})]
	  \item $\matRank y = r - \matRank \lambda$, i.e., $y$ and $\lambda$ satisfy strict complementarity,
	  \item if $\eig $ and $\eig '$ are non-zero eigenvalues of $y$, then 
	  $\eig + \eig ' \neq 0$.
	\end{enumerate}	 
\end{lemma}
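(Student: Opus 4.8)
The plan is to treat the whole statement at once by passing to the Peirce decomposition relative to a Jordan frame and then invoking Proposition~\ref{prop:aux}\ref{paux:3} inside a suitable Peirce subalgebra. Throughout I would use the identity $\inner{\jProd{w}{w}}{\lambda} = \inner{w}{\jProd{w}{\lambda}} = \inner{w}{L_\lambda(w)}$, so that \eqref{eq_lemma} simply asserts that $L_\lambda$ is positive definite on $\ker L_y$ (recall $\jProd{y}{w}=0$ means $w\in\ker L_y$). For $(i)\implies(ii)$ I would write a spectral decomposition $\lambda=\sum_{i=1}^r\eig_i c_i$ and, since $\lambda\in\stdCone$, use Proposition~\ref{prop:aux}\ref{paux:1} to get $\eig_i\geq 0$. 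Setting $I=\{i:\eig_i>0\}$ and $y=\sum_{i\notin I}c_i$, the relations $\jProd{c_i}{c_j}=0$ ($i\neq j$) give $\jProd{y}{\lambda}=0$ at once. A direct Peirce computation (Theorem~\ref{theo:peirce2}) shows $\ker L_y=V(c',1)$ with $c'=\sum_{i\in I}c_i$, and that, viewed inside the Euclidean Jordan subalgebra $V(c',1)$ (Theorem~\ref{theo:peirce1}), $\lambda$ has all its eigenvalues $\eig_i$, $i\in I$, strictly positive, i.e. it lies in the interior of the cone of squares of $V(c',1)$. Proposition~\ref{prop:aux}\ref{paux:3} applied to this subalgebra then yields \eqref{eq_lemma} for every nonzero $w\in\ker L_y$.

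For the converse together with the auxiliary claims, I would start from any $y$ satisfying $(ii)$, fix a Jordan frame $d_1,\dots,d_r$ of $y$ with eigenvalues $\tau_1,\dots,\tau_r$, and expand $\lambda=\sum_{a\leq b}\lambda_{ab}$ with $\lambda_{ab}\in V_{ab}$ in the corresponding Peirce decomposition. Since $L_y$ acts as $\tau_a$ on $V_{aa}$ and as $\tfrac12(\tau_a+\tau_b)$ on $V_{ab}$, the hypothesis $\jProd{y}{\lambda}=0$ forces $\lambda_{aa}=0$ whenever $\tau_a\neq 0$ and $\lambda_{ab}=0$ whenever $\tau_a+\tau_b\neq 0$.

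To prove $(b)$, suppose for contradiction that two nonzero eigenvalues satisfy $\tau_j+\tau_k=0$. Here simplicity enters: Proposition~\ref{prop:trick} provides a nonzero $w\in V_{jk}$, which lies in $\ker L_y$ because $\tau_j+\tau_k=0$. Using \eqref{eq:peirce:1} and $w\in V(d_j,\tfrac12)\cap V(d_k,\tfrac12)$ one computes $\jProd{w}{w}=\tfrac12\norm{w}^2(d_j+d_k)$, whence $\inner{\jProd{w}{w}}{\lambda}=\tfrac12\norm{w}^2(\inner{d_j}{\lambda}+\inner{d_k}{\lambda})=0$, since $\tau_j,\tau_k\neq 0$ make the diagonal components of $\lambda$ on $d_j,d_k$ vanish. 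This contradicts \eqref{eq_lemma}, establishing $(b)$.

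Once $(b)$ holds, both the surviving Peirce blocks of $\lambda$ and $\ker L_y$ reduce to exactly $V(c_0,1)$, where $c_0=\sum_{a:\tau_a=0}d_a$; in particular $\lambda\in V(c_0,1)$ and \eqref{eq_lemma} says $\inner{\jProd{w}{w}}{\lambda}>0$ for every nonzero $w$ in the subalgebra $V(c_0,1)$. Proposition~\ref{prop:aux}\ref{paux:3}, applied inside $V(c_0,1)$, places $\lambda$ in the interior of the cone of squares of $V(c_0,1)$, which is contained in $\stdCone$; this gives $(i)$. An interior element of that subcone has full rank $\tr(c_0)=r-\matRank y$ in $V(c_0,1)$, and since a Jordan frame of $V(c_0,1)$ remains a Jordan frame of $\jAlg$, this equals $\matRank\lambda$, yielding $(a)$. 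The step I expect to be the main obstacle is precisely claim $(b)$: everything downstream — the clean identification $\ker L_y=V(c_0,1)$ and the rank count — hinges on excluding off-diagonal blocks $V_{jk}$ with $\tau_j=-\tau_k\neq 0$ from $\ker L_y$, and this is exactly where Proposition~\ref{prop:trick}, hence the simplicity of $\jAlg$, is indispensable. The only other delicate point is the bookkeeping identifying the subalgebra rank of $\lambda$ with its rank in $\jAlg$.
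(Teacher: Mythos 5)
Your proposal is correct and follows essentially the same route as the paper's own proof: in the forward direction you take $y$ to be the sum of the frame idempotents complementary to the support of $\lambda$, and in the converse you expand $\lambda$ in the Peirce decomposition of a Jordan frame of $y$, use $\jProd{y}{\lambda}=0$ to kill the blocks with $\tau_a\neq 0$ or $\tau_a+\tau_b\neq 0$, invoke Proposition~\ref{prop:trick} (i.e.\ simplicity) to rule out off-diagonal blocks with $\tau_j=-\tau_k\neq 0$, and finish by applying item~\ref{paux:3} of Proposition~\ref{prop:aux} inside the zero-eigenvalue subalgebra (your $V(c_0,1)$ is exactly the paper's $V(c,0)=V(y,0)$) to get membership, interiority, and the rank count. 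The only differences are cosmetic, e.g.\ your explicit identity $\jProd{w}{w}=\tfrac{1}{2}\norm{w}^2(d_j+d_k)$ where the paper merely notes that $\jProd{w}{w}\in V_{jj}+V_{kk}$ while $\lambda$ has no component in those blocks.
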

\begin{proof}
\fbox{$ (i) \Rightarrow (ii)$} If $\lambda \in \stdCone$, we write its spectral decomposition as
\begin{equation*}
\lambda = \sum _{i=1}^r \eig _i c_i,
\end{equation*}
where we assume that only the first $\matRank \lambda$ eigenvalues are positive and 
the others are zero.
If $\matRank \lambda = r$, we take $y = 0$. 
Otherwise, take $$y = \sum _{i = \matRank \lambda +1}^r c_i.$$ 
Note that $y$ is an idempotent and 
that $\lambda$ lies in the relative interior of the cone of squares of the Jordan algebra 
$V(y,0)$. Hence, the condition  \eqref{eq_lemma} is satisfied.

\fbox{$ (ii) \Rightarrow (i)$, together with $(a)$ and $(b)$}	
We write
\begin{equation*}
y = \sum _{i=1}^{r}\eig _i c_i = \sum _{i=1}^{ \matRank y} \eig _i c_i,
\end{equation*}
where $\{c_1,\ldots, c_r\}$ is a Jordan frame, and we assume that the first $\matRank y$ eigenvalues of $y$ are nonzero and 
the others are zero. Then, following Theorem~\ref{theo:peirce2}, we write
\begin{equation*}
\lambda = \sum _{i\leq j} \lambda _{ij} = \sum _{i=1}^r \lambda _{ii} + \sum _{i< j} \lambda _{ij},
\end{equation*}
where $\lambda _{ij} \in V_{ij}$. Using the operation rules
in Theorem~\ref{theo:peirce2}, we get
\begin{equation}
\jProd{c_k}{\lambda _{ij}} = 
  \begin{cases}
   \lambda _{ij},      & \quad \text{ if }  i=j=k\\
  0,  & \quad \text{ if }  \{k\} \cap \{i,j\} = \emptyset \\
   \frac{\lambda _{ij}}{2},  & \quad \text{ if }  i< j, \quad \{k\} \cap \{i,j\} = \{k\}. \\
  \end{cases} \nonumber
\end{equation}
Therefore,
\begin{equation}
\jProd{y}{\lambda} =  \sum _{i = 1}^{\matRank y} \sigma _i \lambda_{ii} + \sum _{1\leq i < j \leq \matRank y} \left(\frac{\sigma_i + \sigma _j}{2}\right) \lambda_{ij} + \sum _{1\leq i \leq \matRank y< j} \frac{\sigma _i }{2}\lambda_{ij}. \label{eq:y_lb}
\end{equation}
By hypothesis, we have $\jProd{y}{\lambda} = 0$. Since the $V_{ij}$'s are mutually orthogonal subspaces, we 
conclude that all terms inside the summations in \eqref{eq:y_lb} must be zero. 
In particular, we have $ \sigma _i \lambda _{ii} = 0$, for every $i \leq \matRank y$. 
As the $\sigma _i$'s are 
nonzero for those indexes, we have $\lambda _{ii} = 0$ so that
\begin{equation}
\lambda =  \sum _{\matRank y < i \leq r} \lambda _{ii} + \sum _{i< j} \lambda _{ij} \label{eq:lb_dec},
\end{equation}

We now show that 
$(b)$ holds. Suppose for the sake of contradiction that 
$\sigma _i + \sigma _j = 0$ for some $i < j \leq \matRank y$. 
By Proposition~\ref{prop:trick}, there is a nonzero 
$w \in V(c_i,1/2)\cap V(c_j, 1/2)$. Since $\jProd{w}{c_k} = 0$ for $k \neq i$, $k \neq j$,
we have 
$$\jProd{y}{w} = (\jProd{\eig _i c_i}{w}) + (\jProd{\eig _j c_j}{w})  = \left(\frac{\eig _i + \eig _j}{2}\right)w = 0.
$$
Moreover, $\jProd{w}{w} \in V(c_i,1) + V(c_j,1)$, due to \eqref{eq:peirce:1}.  By \eqref{eq:lb_dec} and the orthogonality among the $V_{ij}$, 
$$   \inner{\jProd{w}{w}}{\lambda} =  0,  $$
since $\lambda$ has no component in neither $V(c_i,1) = V_{ii}$ nor  $V(c_j,1) = V_{jj}$.
This contradicts \eqref{eq_lemma}, and so it must be the case that $(b)$ holds. 

Let $c = c_1 +\cdots+ c_{\matRank y}$. We will now show that $V(c,0) = V(y,0)$.
Since $\jAlg = V(c,0) \oplus V(c,1/2) \oplus V(c,1)$ and $y \in V(c,1)$, we 
have $V(c,0) \subseteq V(y,0)$, since $\jProd{V(c,0)}{V(c,1)} = \{0\}$. 

The 
next step is to prove that $V(y,0) \subseteq V(c,0)$. Suppose that $\jProd{y}{w} = 0$ and 
write $w = \sum _{i\leq j} w_{ij}$ with $w_{ij} \in V_{ij}$ as in Theorem~\ref{theo:peirce2}.
As in \eqref{eq:y_lb}, we have
\begin{equation*}
\jProd{y}{w} = \sum _{i = 1}^{\matRank y} \sigma _i w_{ii} + \sum _{1\leq i < j \leq \matRank y} \left(\frac{\sigma_i + \sigma _j}{2}\right) w_{ij} + \sum _{1\leq i \leq \matRank y< j} \frac{\sigma _i }{2}w_{ij} = 0. 
\end{equation*}
Because $\sigma _i,\sigma _j$ and $\sigma _i + \sigma _j$ are all nonzero when 
$i\leq  j \leq \matRank$, it follows that 
$w_{ij} = 0$ for $i\leq  j \leq \matRank y$ and for $i\leq  \matRank y < j$, since $w_{ij}$ all lie in 
mutually orthogonal subspaces. Therefore, 
$w = \sum _{\matRank y < i \leq j} w_{ij}$ and Theorem~\ref{theo:peirce2} implies 
that $\jProd{c}{w} = 0$, which shows that $V(y,0) \subseteq V(c,0)$.

We now know that $V(c,0) = V(y,0)$. By Theorem~\ref{theo:peirce1}, $V(c,0)$ is an Euclidean Jordan 
algebra and its  rank is $r - \matRank y$, since $c_{\matRank y + 1} + \cdots + c_r$ is 
the identity in $V(c,0)$.
Then, the condition  \eqref{eq_lemma} means 
that $\inner{z}{\lambda} > 0$ for all $z$ in the cone of squares of the
algebra $V(c,0)$. By item \ref{paux:3} of Proposition~\ref{prop:aux}, this means 
that $\lambda$ belongs to the (relative) interior of $\stdCone ' = \{\jProd{w}{w} \mid  w \in V(c,0)\}$.
This shows both that $\lambda  \in \stdCone$ and that $\matRank \lambda =  \matRank \stdCone ' = r - \matRank y$. These are items $(i)$ and $(a)$.
\end{proof}
Lemma~\ref{lemma:cone} does not apply directly when $\stdCone$ is not simple because the 
complementarity displayed in item $(b)$ only works ``inside the same blocks''. That is essentially 
the only aspect we need to account for. As a reminder, 
we have
\begin{align*}
\jAlg & = \jAlg _1 \oplus \cdots \oplus \jAlg _\ell,\\
\stdCone & = \stdCone _1 \oplus \cdots \oplus \stdCone _\ell,
\end{align*}
where the $\jAlg _i$ are simple Euclidean Jordan algebras of rank $r_i$ and 
$\stdCone _i$ is the cone of squares of $\jAlg _i$. The rank of 
$\jAlg$ is $r = r_1 + \cdots + r_\ell$. 

\begin{theorem}\label{theo:strict}
Let $(\jAlg, \jProd{}{})$ be an Euclidean Jordan algebra of	rank $r$ and $\lambda \in \jAlg$.
The following statements are equivalent:
\begin{enumerate}[label=({\it \roman*})]
	\item $\lambda \in \stdCone$.
	\item There exists $y \in \jAlg$ such that $\jProd{y}{\lambda} = 0$ and
	\begin{equation}\label{eq:theo}
	\inner{\jProd{w}{w}}{ \lambda } > 0, 
	\end{equation}
	for every $w \in \jAlg$ satisfying $\jProd{y}{w} = 0$ and $w \neq 0$.
\end{enumerate}
Moreover, any $y$ satisfying 	$(ii)$ is such that 
\begin{enumerate}[label=({\it \alph*})]
	\item $\matRank y = r - \matRank \lambda$, i.e., $y$ and $\lambda$ satisfy strict complementarity,
	\item if $\eig $ and $\eig '$ are non-zero eigenvalues of $y$ belonging to the same block, then 
	$\eig + \eig ' \neq 0$.
\end{enumerate}	 
\end{theorem}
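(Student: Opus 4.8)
The plan is to reduce the statement to the already-established simple case, Lemma~\ref{lemma:cone}, by exploiting the block decomposition $\jAlg = \jAlg _1 \oplus \cdots \oplus \jAlg _\ell$ and $\stdCone = \stdCone _1 \oplus \cdots \oplus \stdCone _\ell$. Writing $\lambda = \lambda_1 + \cdots + \lambda_\ell$, $y = y_1 + \cdots + y_\ell$ and $w = w_1 + \cdots + w_\ell$ with the $k$-th component in $\jAlg_k$, I would first record the decoupling identities that make the whole argument go through: since $\jProd{\jAlg_i}{\jAlg_j} = \{0\}$ for $i \neq j$ and the blocks are mutually orthogonal, we have $\jProd{y}{w} = \sum_k \jProd{y_k}{w_k}$ with the summands lying in distinct blocks, $\inner{\jProd{w}{w}}{\lambda} = \sum_k \inner{\jProd{w_k}{w_k}}{\lambda_k}$, and $\lambda \in \stdCone \Leftrightarrow \lambda_k \in \stdCone_k$ for every $k$. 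In particular $\jProd{y}{w} = 0$ if and only if $\jProd{y_k}{w_k} = 0$ for all $k$, and similarly for $\jProd{y}{\lambda} = 0$.

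With these in hand, the core step is to show that condition $(ii)$ for the triple $(\jAlg, y, \lambda)$ is equivalent to condition $(ii)$ of Lemma~\ref{lemma:cone} holding for each triple $(\jAlg_k, y_k, \lambda_k)$. For the forward direction I would probe one block at a time: given a nonzero $w_k \in \jAlg_k$ with $\jProd{y_k}{w_k} = 0$, the element $w = w_k$ (zero in the other blocks) is nonzero, satisfies $\jProd{y}{w} = 0$, and has $\inner{\jProd{w}{w}}{\lambda} = \inner{\jProd{w_k}{w_k}}{\lambda_k}$, so $(ii)$ forces this quantity to be positive. For the converse I would first invoke Lemma~\ref{lemma:cone} to turn each block condition into $\lambda_k \in \stdCone_k$, which guarantees $\inner{\jProd{w_k}{w_k}}{\lambda_k} \geq 0$ for every $w_k$ by self-duality; then for any nonzero $w$ with $\jProd{y}{w} = 0$ the decoupling identity expresses $\inner{\jProd{w}{w}}{\lambda}$ as a sum of nonnegative terms in which at least one summand (corresponding to a block with $w_k \neq 0$) is strictly positive. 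Chaining this equivalence with Lemma~\ref{lemma:cone} (block $(ii) \Leftrightarrow \lambda_k \in \stdCone_k$) and with $\lambda \in \stdCone \Leftrightarrow \lambda_k \in \stdCone_k$ yields $(i) \Leftrightarrow (ii)$; for the implication $(i) \Rightarrow (ii)$ one assembles the block witnesses $y_k$ produced by Lemma~\ref{lemma:cone} into $y = \sum_k y_k$.

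For the two supplementary claims I would again argue block-by-block. Assembling the Jordan frames of the $y_k$ inside their respective blocks produces a Jordan frame for $y$ whose nonzero eigenvalues are exactly the union of the nonzero eigenvalues of the $y_k$; hence $\matRank y = \sum_k \matRank y_k$ and, by Lemma~\ref{lemma:cone}$(a)$ applied in each block, $\matRank y = \sum_k (r_k - \matRank \lambda_k) = r - \matRank \lambda$, giving $(a)$. Item $(b)$ is then immediate from Lemma~\ref{lemma:cone}$(b)$: two nonzero eigenvalues of $y$ lying in the same block are nonzero eigenvalues of a single $y_k$, so their sum is nonzero. The point I expect to require the most care is precisely the restriction to the same block in $(b)$, and explaining why it cannot be strengthened: the contradiction in Lemma~\ref{lemma:cone}$(b)$ is produced by the nonzero element of $V(c_i,1/2) \cap V(c_j,1/2)$ supplied by Proposition~\ref{prop:trick}, which is available only within a simple algebra. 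When $c_i$ and $c_j$ sit in different blocks this intersection is trivial (the off-diagonal Peirce space between them vanishes because $\jProd{\jAlg_i}{\jAlg_j} = \{0\}$), so no cancellation of eigenvalues across blocks can be ruled out, and the weaker form of $(b)$ is the best possible.
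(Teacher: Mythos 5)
Your proposal is correct and follows essentially the same route as the paper's own proof: decompose $\lambda$, $y$, and $w$ along the simple blocks $\jAlg = \jAlg_1 \oplus \cdots \oplus \jAlg_\ell$, observe that condition $(ii)$ decouples blockwise because $\jProd{\jAlg_i}{\jAlg_j} = \{0\}$ for $i \neq j$, and apply Lemma~\ref{lemma:cone} in each block (assembling $y = y_1 + \cdots + y_\ell$ for the direction $(i) \Rightarrow (ii)$). You spell out several steps the paper leaves implicit — the decoupling identities, the rank additivity behind item $(a)$, and the explanation of why item $(b)$ cannot hold across blocks — but the underlying argument is the same.
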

\begin{proof}
\fbox{$ (i) \Rightarrow (ii)$} Write $\lambda = \lambda _1 + \cdots + \lambda _\ell$, according to block division in $\jAlg$.
Then apply Lemma~\ref{lemma:cone} to each $\lambda _i$ to obtain $y_i$, and let $y = y_1 + \cdots + y_\ell$.

\fbox{$ (ii) \Rightarrow (i)$}  Write $\lambda = \lambda _1 + \cdots + \lambda _\ell$ and 
$y = y_1 + \cdots + y_\ell$. Then, the inequality  \eqref{eq:theo} implies that for 
every $i$,
\begin{equation}
	\inner{\jProd{w_i}{w _i}}{ \lambda _i } > 0, 
\end{equation}
for all nonzero $w_i$ with $\jProd{w_i}{y_i} = 0$. Therefore, Lemma~\ref{lemma:cone} applies to each 
$y_i$, thus concluding the proof.\end{proof}

Theorem \ref{theo:strict} extends Lemma~1 in \cite{LFF16} for all symmetric cones. 
For a product of second order cones, no similar result is explicitly stated in \cite{FF16}, but it can be derived from the proof of Proposition 3.2 therein. 

\section{Comparison of KKT points for \eqref{eq:sym} and \eqref{eq:slack}}\label{sec:kkt}
Although \eqref{eq:sym} and \eqref{eq:slack} share the same global minima, their KKT points are not necessarily the same. However, if $(x,\mu,\lambda)$ is a KKT triple for \eqref{eq:sym}, it is 
easy to construct a KKT quadruple for \eqref{eq:sym} according to the next proposition.

\begin{proposition}
Let $(x,\mult,\lambda) \in  \Re^n \times \Re^m \times \jAlg$ be a KKT triple for \eqref{eq:sym}, then $(x,\sqrt{g(x)},\mult,\lambda)$ is a 
KKT quadruple for \eqref{eq:slack}.
\end{proposition}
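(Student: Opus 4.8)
The plan is to verify the four KKT conditions \eqref{eq:kkt_slack.1}--\eqref{eq:kkt_slack.4} for the quadruple $(x,\sqrt{g(x)},\mult,\lambda)$, exploiting that three of them are inherited almost verbatim from the triple $(x,\mult,\lambda)$. Set $y \doteq \sqrt{g(x)}$. First, \eqref{eq:kkt_slack.1} and \eqref{eq:kkt_slack.4} coincide literally with \eqref{eq:kkt_sym.1} and \eqref{eq:kkt_sym.5}, so they hold with no further work. Next, because $g(x) \in \stdCone$ by \eqref{eq:kkt_sym.3}, item \ref{paux:1} of Proposition~\ref{prop:aux} guarantees that the eigenvalues of $g(x)$ are nonnegative; hence $\sqrt{g(x)}$ is well defined, and the definition of the square root gives $\jProd{y}{y} = \jProd{\sqrt{g(x)}}{\sqrt{g(x)}} = g(x)$, which is exactly \eqref{eq:kkt_slack.3}.

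The only real content is \eqref{eq:kkt_slack.2}, i.e. $\jProd{\lambda}{\sqrt{g(x)}} = 0$. I would argue this by a Peirce computation that parallels the one in the proof of Lemma~\ref{lemma:cone}. Write a spectral decomposition $g(x) = \sum_{i=1}^r \eig_i c_i$ with the Jordan frame ordered so that $\eig_i > 0$ for $i \le k \doteq \matRank g(x)$ and $\eig_i = 0$ otherwise. Decompose $\lambda = \sum_{i \le j} \lambda_{ij}$ along the Peirce spaces $V_{ij}$ of this frame, as in Theorem~\ref{theo:peirce2}. Evaluating $\jProd{g(x)}{\lambda}$ with the multiplication rules of Theorem~\ref{theo:peirce2} yields
\begin{equation*}
\jProd{g(x)}{\lambda} = \sum_{i=1}^{k} \eig_i \lambda_{ii} + \sum_{1 \le i < j} \frac{\eig_i + \eig_j}{2}\lambda_{ij},
\end{equation*}
and, since the complementarity condition \eqref{eq:kkt_sym.4} forces this to vanish while the $V_{ij}$ are mutually orthogonal, every $\lambda_{ij}$ whose block meets an index $\le k$ must be zero. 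Thus $\lambda$ is supported only on the blocks $V_{ij}$ with $i,j > k$, where $\eig_i = \eig_j = 0$, hence where $\sqrt{\eig_i} = \sqrt{\eig_j} = 0$ as well. Repeating the same computation with $\sqrt{g(x)} = \sum_{i=1}^k \sqrt{\eig_i}\, c_i$ in place of $g(x)$ then makes every surviving term of $\jProd{\sqrt{g(x)}}{\lambda}$ vanish, giving \eqref{eq:kkt_slack.2}.

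The step I expect to be the crux is precisely this last one: showing that passing from $g(x)$ to its square root does not destroy complementarity with $\lambda$. The bookkeeping is routine once $g(x)$ and $\lambda$ are aligned, so the real leverage comes from observing that $g(x)$ and $\lambda$ operator-commute---a consequence of $g(x),\lambda \in \stdCone$ together with $\jProd{g(x)}{\lambda}=0$---so that both, and therefore $\sqrt{g(x)}$, are diagonalized by a common Jordan frame. Equivalently, one may avoid the Peirce bookkeeping by writing $\sqrt{g(x)}$ as a polynomial in $g(x)$ with no constant term and using $\jProd{\lambda}{g(x)} = 0$ to propagate $\jProd{\lambda}{g(x)^m} = 0$ to all powers $m \ge 1$; I would nonetheless present the Peirce argument, since it relies only on results already established in the paper.
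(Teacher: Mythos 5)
Your proof is correct, but it takes a genuinely different route from the paper's. The paper works with a Jordan frame of $\lambda$ rather than of $g(x)$: it uses item \ref{paux:4} of Proposition~\ref{prop:aux} twice (first to pass from \eqref{eq:kkt_sym.4} to $\inner{\lambda}{g(x)}=0$, split this into $\inner{c_i}{g(x)}=0$ for the idempotents carrying positive eigenvalues of $\lambda$, then back to $\jProd{c_i}{g(x)}=0$), and then invokes the first Peirce decomposition (Theorem~\ref{theo:peirce1}) with $c = c_1+\cdots+c_{\matRank\lambda}$: one gets $\lambda \in V(c,1)$, $g(x)\in V(c,0)$, hence $\sqrt{g(x)}\in V(c,0)$ because $V(c,0)$ is a subalgebra, and $\jProd{V(c,1)}{V(c,0)}=\{0\}$ finishes the argument. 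You instead decompose $\lambda$ in the Peirce spaces $V_{ij}$ of a frame of $g(x)$ (Theorem~\ref{theo:peirce2}) and kill its components by the explicit computation of $\jProd{g(x)}{\lambda}$, exactly mirroring the $(ii)\Rightarrow(i)$ step in the proof of Lemma~\ref{lemma:cone}. Each approach has something to offer: the paper's is shorter and more structural, exhibiting the clean picture $\lambda \in V(c,1)$, $g(x),\sqrt{g(x)} \in V(c,0)$; yours is more computational but, notably, never uses \eqref{eq:kkt_sym.2}. Indeed, since your argument only needs the eigenvalues of $g(x)$ to be nonnegative, it establishes the slightly stronger fact that \emph{any} $\lambda \in \jAlg$ (not necessarily in $\stdCone$) satisfying $\jProd{\lambda}{g(x)}=0$ with $g(x)\in\stdCone$ also satisfies $\jProd{\lambda}{\sqrt{g(x)}}=0$, whereas the paper's use of item \ref{paux:4} of Proposition~\ref{prop:aux} requires both arguments to lie in $\stdCone$. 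Your closing remarks (operator commutativity, or writing $\sqrt{g(x)}$ as a polynomial in $g(x)$ with zero constant term) are valid alternatives, but they would require justifying facts not proved in the paper, so your choice to present the Peirce computation is the right one.
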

\begin{proof}
The quadruple $(x,\sqrt{g(x)},\mult,\lambda)$ satisfies \eqref{eq:kkt_slack.1}, \eqref{eq:kkt_slack.3} and \eqref{eq:kkt_slack.4}.
We will now check that \eqref{eq:kkt_slack.2} is also satisfied.
We can 
write 
$$
\lambda = \sum _{i = 1}^{\matRank \lambda} \eig _i c_i,
$$
where  $\{c_1,\ldots, c_r\}$ is a Jordan frame for $\lambda$ such that 
$\eig _i > 0$ for $i = 1,\ldots, \matRank \lambda$.
By item \ref{paux:4} of Proposition~\ref{prop:aux} and \eqref{eq:kkt_sym.4}, we 
have $\inner{\lambda}{g(x)} = 0$, which implies that 
$\inner{c_i}{g(x)} = 0$ for $i = 1,\ldots, \matRank \lambda$. 
Again, by item \ref{paux:4} of Proposition~\ref{prop:aux}, we obtain 
that $\jProd{c_i}{g(x)} = 0$ for $i = 1,\ldots, \matRank \lambda$. 

Let $c = c_1+ \cdots + c_{\matRank \lambda}$. Using Theorem~\ref{theo:peirce1}, we 
write
$$
\jAlg = V(c,1)\bigoplus V(c,1/2) \bigoplus V(c,0).
$$ 
We then have $\lambda \in V(c,1)$ and $g(x) \in V(c,0)$. Because $V(c,0)$ is 
also an Euclidean Jordan algebra, we have $\sqrt{g(x)} \in V(c,0)$. Finally, 
since $\jProd{V(c,1)}{V(c,0)} = \{0\}$, we readily obtain $\jProd{\lambda}{\sqrt{g(x)}} = 0$, 
which is \eqref{eq:kkt_slack.2}.
\end{proof}

It is not true in general that if $(x,y,\mult,\lambda)$ is a KKT quadruple for 
\eqref{eq:slack}, then $(x,\mu,\lambda)$ is a KKT triple for \eqref{eq:sym}. 
Nevertheless, the only obstacle is  that $\lambda$ might fail to 
belong to $\stdCone$.

\begin{proposition}\label{prop:kkt}
Let $(x,y,\mult,\lambda) \in  \Re^n \times \jAlg \times \Re^m \times \jAlg$ be a KKT quadruple for \eqref{eq:slack} such that 
$\lambda \in \stdCone$. Then, $(x,\mult,\lambda)$ is a KKT triple for 
\eqref{eq:sym}.
\end{proposition}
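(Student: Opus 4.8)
The plan is to verify the five KKT conditions \eqref{eq:kkt_sym.1}--\eqref{eq:kkt_sym.5} for \eqref{eq:sym} one by one, checking that a KKT quadruple for \eqref{eq:slack} satisfying $\lambda \in \stdCone$ already supplies four of them for free and leaves only the Jordan-product complementarity \eqref{eq:kkt_sym.4} to be argued. Concretely, \eqref{eq:kkt_sym.1} is literally \eqref{eq:kkt_slack.1}; condition \eqref{eq:kkt_sym.2}, namely $\lambda \in \stdCone$, is precisely the standing hypothesis; and \eqref{eq:kkt_sym.5} is \eqref{eq:kkt_slack.4}. For \eqref{eq:kkt_sym.3} I would invoke \eqref{eq:kkt_slack.3}, which gives $g(x) = \jProd{y}{y}$, so $g(x)$ is a square and therefore lies in the cone of squares $\stdCone$ by definition.

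The only condition requiring work is \eqref{eq:kkt_sym.4}, that is, $\jProd{\lambda}{g(x)} = 0$. The first step is to pass from the slack complementarity \eqref{eq:kkt_slack.2}, $\jProd{\lambda}{y} = 0$, to the inner-product statement $\inner{\lambda}{g(x)} = 0$. I would compute, using \eqref{eq:kkt_slack.3} together with the trace-associativity property $\inner{\jProd{a}{b}}{c} = \inner{a}{\jProd{b}{c}}$ of an Euclidean Jordan algebra,
\begin{equation*}
\inner{\lambda}{g(x)} = \inner{\jProd{y}{y}}{\lambda} = \inner{y}{\jProd{y}{\lambda}} = \inner{y}{0} = 0,
\end{equation*}
where the last equality uses \eqref{eq:kkt_slack.2} and commutativity of the Jordan product.

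Having established that $\lambda$ and $g(x)$ are both members of $\stdCone$ with $\inner{\lambda}{g(x)} = 0$, the final step is to upgrade this to the Jordan-product identity by appealing to item \ref{paux:4} of Proposition~\ref{prop:aux}, which states that for $y,w \in \stdCone$ one has $\jProd{y}{w} = 0$ if and only if $\inner{y}{w} = 0$. Applied to $\lambda, g(x) \in \stdCone$, this yields $\jProd{\lambda}{g(x)} = 0$, which is exactly \eqref{eq:kkt_sym.4}. The main (and essentially only) obstacle is the derivation of \eqref{eq:kkt_sym.4}: it is tempting to try to deduce $\jProd{\lambda}{g(x)} = \jProd{\lambda}{\jProd{y}{y}} = 0$ directly from $\jProd{\lambda}{y} = 0$ by manipulating Jordan products, but the Jordan algebra is non-associative, so this does not follow formally. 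The clean route is the inner-product computation above followed by the self-duality-based equivalence in Proposition~\ref{prop:aux}, which sidesteps the non-associativity entirely. With all five conditions verified, $(x,\mult,\lambda)$ is a KKT triple for \eqref{eq:sym}.
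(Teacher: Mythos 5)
Your proposal is correct and follows essentially the same route as the paper's own proof: verify the four immediate conditions, derive $\inner{\lambda}{g(x)} = 0$ via the trace-associativity property $\inner{\jProd{y}{y}}{\lambda} = \inner{y}{\jProd{y}{\lambda}}$ together with \eqref{eq:kkt_slack.2}, and then upgrade to $\jProd{\lambda}{g(x)} = 0$ using item \ref{paux:4} of Proposition~\ref{prop:aux}. Your remark that non-associativity blocks a direct Jordan-product manipulation correctly identifies why the inner-product detour is the right move.
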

\begin{proof}
Under the current hypothesis, \eqref{eq:kkt_sym.1}, \eqref{eq:kkt_sym.2}, \eqref{eq:kkt_sym.3} and 
\eqref{eq:kkt_sym.5}
are satisfied. Due to \eqref{eq:kkt_slack.3}, we have 
\begin{align*}
0  = \inner{y}{\jProd{y}{\lambda}} = \inner{\jProd{y}{y}}{\lambda} = \inner{g(x)}{\lambda},
\end{align*}
where the second equality follows from Property 3 of the 
$\jAlg{}{}$ operator.
Therefore, by item \ref{paux:4} of Proposition~\ref{prop:aux}, we 
obtain $\jProd{g(x)}{\lambda} = 0$, which is \eqref{eq:kkt_sym.4}.
\end{proof}

We then have the following immediate consequence.
\begin{proposition}\label{prop:kkt2}
Let $(x,y,\mult,\lambda) \in \Re^n \times \jAlg \times \Re^m \times \jAlg$ be a KKT quadruple for \eqref{eq:slack}.
\begin{enumerate}[label=({\it \roman*})]
\item If $y$ and $\lambda$ satisfy the assumptions of item $(ii)$ of Theorem~\ref{theo:strict}, 
then $(x,\mult, \lambda)$ is a 
KKT triple for \eqref{eq:sym} satisfying strict complementarity. 
\item If SOSC-NLP holds at $(x,y,\mult,\lambda)$, then $(x,\mult, \lambda)$ is a 
KKT triple for \eqref{eq:sym} satisfying strict complementarity.
\end{enumerate}
\end{proposition}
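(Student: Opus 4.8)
The plan is to dispose of item $(i)$ directly and then reduce item $(ii)$ to item $(i)$ by a judicious choice of test directions in the SOSC-NLP inequality.

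For item $(i)$, the hypothesis is precisely condition $(ii)$ of Theorem~\ref{theo:strict}, so the equivalence established there immediately yields $\lambda \in \stdCone$. Having $\lambda \in \stdCone$, I would invoke Proposition~\ref{prop:kkt} to conclude that $(x,\mult,\lambda)$ is a KKT triple for \eqref{eq:sym}. What remains is to verify strict complementarity in the sense of Definition~\ref{def:strict}, namely $\matRank g(x) + \matRank \lambda = r$. Here I would combine two facts: the ``moreover'' part of Theorem~\ref{theo:strict}, which gives $\matRank y = r - \matRank \lambda$, and the slack equation \eqref{eq:kkt_slack.3}, which says $g(x) = \jProd{y}{y}$. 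Writing the spectral decomposition $y = \sum_{i} \eig_i c_i$ and using $\jProd{c_i}{c_j} = 0$ for $i \neq j$ gives $\jProd{y}{y} = \sum_i \eig_i^2 c_i$; hence $g(x)$ shares the Jordan frame of $y$ and its nonzero eigenvalues are exactly the squares of those of $y$, so $\matRank g(x) = \matRank y = r - \matRank \lambda$. This is the desired equality.

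For item $(ii)$, the idea is to show that SOSC-NLP forces $y$ and $\lambda$ to satisfy condition $(ii)$ of Theorem~\ref{theo:strict}, after which item $(i)$ closes the argument. Given any nonzero $w \in \jAlg$ with $\jProd{y}{w} = 0$, I would test the SOSC-NLP inequality of Proposition~\ref{prop:sosc_slack} with the pair $(v,w) = (0,w)$. This pair is nonzero, and with $v = 0$ the two constraints read $\jac{h}(x)v = 0$ and $\jac{g}(x)v - 2\jProd{y}{w} = -2\jProd{y}{w} = 0$, both of which hold. The inequality then reduces to $2\inner{\jProd{w}{w}}{\lambda} > 0$, which is exactly \eqref{eq:theo}. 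Combined with \eqref{eq:kkt_slack.2}, i.e.\ $\jProd{\lambda}{y} = 0$, this establishes condition $(ii)$ of Theorem~\ref{theo:strict} for the pair $(y,\lambda)$, so item $(i)$ applies and delivers the conclusion.

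All of these steps are short, and the only one demanding any care is the rank identity $\matRank g(x) = \matRank y$, which rests on the observation that squaring in a Euclidean Jordan algebra preserves the Jordan frame and merely squares the eigenvalues. I therefore do not expect a genuine obstacle; the work is mainly in assembling Theorem~\ref{theo:strict}, Proposition~\ref{prop:kkt} and Proposition~\ref{prop:sosc_slack} with the correct test directions.
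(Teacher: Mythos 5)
Your proposal is correct and follows essentially the same route as the paper: Theorem~\ref{theo:strict} plus Proposition~\ref{prop:kkt} for item $(i)$, with the rank identity $\matRank g(x) = \matRank y$ coming from $g(x) = \jProd{y}{y}$, and the test direction $(v,w)=(0,w)$ in Proposition~\ref{prop:sosc_slack} to reduce item $(ii)$ to item $(i)$. The only difference is that you spell out the eigenvalue-squaring argument for $\matRank g(x) = \matRank y$, which the paper leaves implicit.
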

\begin{proof}
\begin{enumerate}[label=({\it \roman*})]
\item By Theorem~\ref{theo:strict}, $\lambda \in \stdCone$. Therefore, 
by Proposition~\ref{prop:kkt}, $(x,\mult, \lambda)$ is a 
KKT triple for \eqref{eq:sym}. Moreover, due to 
item $(a)$ of Theorem~\ref{theo:strict}, we have
$\matRank y + \matRank \lambda = r$. As $g(x) = \jProd{y}{y}$, we 
have $\matRank g(x) + \matRank \lambda = r$ as well.

\item If SOSC-NLP holds at $(x,y,\mult,\lambda)$, then taking 
$v = 0$ in \eqref{eq:sosc_nlp.1} in Proposition~\ref{prop:sosc_slack}, we obtain that $y$ and $\lambda$ satisfy the assumptions of item $(ii)$ of Theorem~\ref{theo:strict}. Therefore, the result follows 
from the previous item.

\end{enumerate}
\end{proof}
Note that Propositions \ref{prop:kkt} and \ref{prop:kkt2} extend previous results obtained in Section 3 of \cite{FF16} for NSOCPs and in Section 3 of \cite{LFF16} for NSDPs. 
In comparison, in this work,  besides the fact that we are focused on a more general family of cones, we are also considering equality constraints.  

\section{Comparison of constraint qualifications}\label{sec:cons}
In order to understand the differences between constraint qualifications for 
\eqref{eq:sym} and \eqref{eq:slack}, we first have to understand 
the shape of the tangent cones of $\stdCone$. A description can be 
found in Section 2.3 in the work by Kong, Tun{\c{c}}el and Xiu \cite{Kong2011}. 
Apart from that, we can also the use the relations described by Pataki in Lemma~2.7 of \cite{pataki_handbook}.
For the sake of self-containment, we will give an account of the theory. 
In what follows, if $C\subseteq \jAlg$, we define $C^\perp \coloneqq \{z \in \jAlg \mid \inner{z}{y} = 0, \forall y \in C \}$.

Let $z \in \stdCone$, where the rank of $\stdCone$ is $r$. We will now proceed to describe the shape of $\tanCone{z}{\stdCone}$ and 
$\lineality \tanCone{z}{\stdCone}$. First, denote by $\minFacePoint{z}{\stdCone}$ the \emph{minimal face} of $\stdCone$ 
which contains $z$. Denote by $\minFacePoint{z}{\stdCone}^\Delta$ the \emph{conjugated face} of  $\minFacePoint{z}{\stdCone}$, which 
is defined as $\stdCone^* \cap \minFacePoint{z}{\stdCone}^\perp$. Since, $\stdCone$ is self-dual, we have 
$\minFacePoint{z}{\stdCone}^\Delta = \stdCone \cap \minFacePoint{z}{\stdCone}^\perp$.
Now, the discussion in Section 2 and Lemma~2.7 of \cite{pataki_handbook} shows that 
\begin{align*}
\minFacePoint{z}{\stdCone}^\Delta & =  \stdCone \cap \{z\}^\perp,\\
\tanCone{z}{\stdCone} & = \minFacePoint{z}{\stdCone}^{\Delta*},\\
\lineality \tanCone{z}{\stdCone} & = \minFacePoint{z}{\stdCone}^{\Delta\perp}.
\end{align*}

Our next task is to describe $\minFacePoint{z}{\stdCone}$. Let $\{c_1,\ldots, c_r\}$ be a Jordan 
frame for $z$ and write the spectral decomposition of 
$z$ as
$$
z = \sum _{i = 1}^{\matRank z} \eig _i c_i,
$$
where $\eig _1, \ldots, \eig _{\matRank z}$ are positive. Now, define $c = c_1+\cdots + c_{\matRank z}$.
Then, $c$ is an idempotent and Theorem~\ref{theo:peirce1} implies that
$$
\jAlg = V(c,1) \bigoplus V\left(c,\frac{1}{2}\right) \bigoplus V(c,0).
$$
A result by Faybusovich  (Theorem~2 in \cite{FB06}) implies that 
$\minFacePoint{z}{\stdCone} $ is the cone of squares in $V(c,1)$, that is,
$$
\minFacePoint{z}{\stdCone} = \{\jProd{y}{y} \mid  y \in V(c,1)\}.
$$
Then, we can see that $\minFacePoint{z}{\stdCone}^\Delta$ is precisely the cone of 
squares of $V(c,0)$. We remark this  fact as a proposition.
\begin{proposition}
$\minFacePoint{z}{\stdCone} ^\Delta = \{\jProd{y}{y} \mid y \in V(c,0) \}$. 
\end{proposition}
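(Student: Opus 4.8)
The plan is to combine the already-established description $\minFacePoint{z}{\stdCone} = \{\jProd{y}{y} \mid y \in V(c,1)\}$ with the defining relation $\minFacePoint{z}{\stdCone}^\Delta = \stdCone \cap \minFacePoint{z}{\stdCone}^\perp$, and then to identify the resulting set with the cone of squares of $V(c,0)$. First I would compute $\minFacePoint{z}{\stdCone}^\perp$. Since $\minFacePoint{z}{\stdCone}$ is the cone of squares of the Euclidean Jordan algebra $V(c,1)$, it is full-dimensional inside $V(c,1)$, so its linear span is all of $V(c,1)$. The orthogonal complement of a set equals that of its span, and because the three Peirce spaces in Theorem~\ref{theo:peirce1} are mutually orthogonal and sum to $\jAlg$, this gives $\minFacePoint{z}{\stdCone}^\perp = V(c,1)^\perp = V(c,1/2) \oplus V(c,0)$. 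Hence $\minFacePoint{z}{\stdCone}^\Delta = \stdCone \cap (V(c,1/2)\oplus V(c,0))$.

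Next I would show that this intersection actually discards the $V(c,1/2)$ part, i.e.\ that $\stdCone \cap (V(c,1/2)\oplus V(c,0)) = \stdCone \cap V(c,0)$. Take $x$ in the left-hand side and write it according to the Peirce decomposition; by assumption its $V(c,1)$-component vanishes. Since $c \in V(c,1)$ and the Peirce spaces are mutually orthogonal, $\inner{x}{c} = 0$. As $x \in \stdCone$ and $c \in \stdCone$ (being an idempotent, hence a square), item \ref{paux:4} of Proposition~\ref{prop:aux} yields $\jProd{x}{c} = 0$, that is, $x \in V(c,0)$. This proves one inclusion; the reverse is trivial, so the claimed equality of intersections holds.

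It then remains to identify $\stdCone \cap V(c,0)$ with $\{\jProd{y}{y}\mid y\in V(c,0)\}$. The inclusion $\supseteq$ is immediate, since a square of an element of the subalgebra $V(c,0)$ is both a square in $\jAlg$ (hence in $\stdCone$) and an element of $V(c,0)$. For $\subseteq$, given $x \in \stdCone \cap V(c,0)$ I would apply the Spectral Theorem~\ref{theo:spec} inside the Euclidean Jordan algebra $V(c,0)$ to write $x = \sum_k \mu_k d_k$ with $d_k$ primitive idempotents of $V(c,0)$. Each $d_k$ is a square, so $d_k \in \stdCone$, and self-duality together with orthogonality of the frame gives $\mu_k \norm{d_k}^2 = \inner{x}{d_k} \geq 0$, whence $\mu_k \geq 0$. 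Thus $x = \jProd{w}{w}$ with $w = \sum_k \sqrt{\mu_k}\, d_k \in V(c,0)$, exactly the square root of $x$ taken within $V(c,0)$. Chaining the three identities gives the proposition.

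I expect the main obstacle to be this last step: verifying that membership in the ambient cone $\stdCone$, once restricted to $V(c,0)$, coincides with membership in the cone of squares of $V(c,0)$ — in other words, that the two notions of positive semidefiniteness agree. The self-duality trick of pairing $x$ against the primitive idempotents of $V(c,0)$ is what makes this clean, since it avoids having to argue separately that a Jordan frame of the subalgebra $V(c,0)$ is also a Jordan frame of $\jAlg$.
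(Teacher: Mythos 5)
Your proof is correct, and it takes a route that differs from the paper's in two ways, one cosmetic and one substantive. Cosmetically: the paper starts from Pataki's characterization $\minFacePoint{z}{\stdCone}^\Delta = \stdCone \cap \{z\}^\perp$, pairing $w \in \stdCone$ against $z$ itself to get $\inner{c_i}{w} = 0$ for $i \leq \matRank z$ and then $\jProd{c_i}{w} = 0$ by complementarity (item \ref{paux:4} of Proposition~\ref{prop:aux}), hence $w \in V(c,0)$; you instead work from the definition $\stdCone \cap \minFacePoint{z}{\stdCone}^\perp$, use the Faybusovich description of $\minFacePoint{z}{\stdCone}$ as the cone of squares of $V(c,1)$ to compute $\minFacePoint{z}{\stdCone}^\perp = V\left(c,\frac{1}{2}\right) \oplus V(c,0)$, and then pair against $c$; the engine (self-duality plus item \ref{paux:4} of Proposition~\ref{prop:aux}) is identical, though your entry point consumes the face description, which the paper's proof of this particular proposition does not need. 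Substantively: the paper simply asserts that $w \in \stdCone \cap V(c,0)$ can be written as $\jProd{y}{y}$ with $y \in V(c,0)$ ``as $V(c,0)$ is an Euclidean Jordan algebra,'' whereas you actually prove this identification of $\stdCone \cap V(c,0)$ with the cone of squares of the subalgebra: take a spectral decomposition $x = \sum_k \mu_k d_k$ inside $V(c,0)$, observe each primitive idempotent $d_k$ of the subalgebra is a square in $\jAlg$ and hence lies in $\stdCone$, and use self-duality with the orthogonality of the frame to get $\mu_k \norm{d_k}^2 = \inner{x}{d_k} \geq 0$, so that $x$ is the square of $\sum_k \sqrt{\mu_k}\, d_k \in V(c,0)$. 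You correctly flag this as the real content of the statement, namely that membership in the ambient cone coincides with positivity of the eigenvalues computed in the subalgebra; the paper buys brevity by leaving that verification implicit, while your version is self-contained at exactly the point where care is needed.
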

\begin{proof}
We first show that 	$\minFacePoint{z}{\stdCone} ^\Delta \subseteq \{\jProd{y}{y} \mid y \in V(c,0) \}$.
If $w \in \stdCone$ and $\inner{w}{z} = 0$, then we must have $\inner{c_i}{w} = 0$, for every 
$i \in \{1, \ldots, \matRank z \}$. Then Lemma~\ref{lemma:cone} implies that 
$\jProd{c_i}{w} = 0$ for those $i$. This shows that $\jProd{c}{w} = 0$, so that 
$w \in V(c,0)$. As $w \in \stdCone$ and $V(c,0)$ is an Euclidean Jordan algebra, we have $w = \jProd{y}{y}$ for some $y \in V(c,0)$.

Now, let $w = \jProd{y}{y}$ with $y \in V(c,0)$. As $z \in V(c,1)$ and $w \in V(c,0)$ we have 
$\inner{w}{z} = 0$, so that $w \in \minFacePoint{z}{\stdCone} ^\Delta$.
\end{proof}
If we restrict ourselves to $V(c,0)$, then $\minFacePoint{z}{\stdCone} ^\Delta $ is a genuine symmetric 
cone, since it is a cone of squares induced by an Euclidean Jordan algebra. In particular, $\minFacePoint{z}{\stdCone} ^\Delta$ is self-dual in the sense that 
$\minFacePoint{z}{\stdCone} ^\Delta = \{w \in V(c,0) \mid \inner{w}{v} \geq 0, \forall v \in \minFacePoint{z}{\stdCone} ^\Delta \}$.
Following the Peirce decomposition, we conclude that 
\begin{align}
\tanCone{z}{\stdCone}  = \minFacePoint{z}{\stdCone}^{\Delta*} & = V(c,1) \bigoplus V\left(c,\frac{1}{2}\right) \bigoplus \minFacePoint{z}{\stdCone}^{\Delta}, \label{eq:tan_cone} \\
\lineality \tanCone{z}{\stdCone}  = \minFacePoint{z}{\stdCone}^{\Delta\perp} & = V(c,1) \bigoplus V\left(c,\frac{1}{2}\right) \bigoplus \{0\}, \label{eq:lin_cone}
\end{align}
where we recall that $\lineality \tanCone{z}{\stdCone}$ denotes the largest subspace contained in the cone $\tanCone{z}{\stdCone}$.

We are now prepared to discuss the difference between constraint qualifications for \eqref{eq:sym} and \eqref{eq:slack}.
This discussion is analogous to the one in Section \ref{def:nondeg} of \cite{LFF16}. 
We remark that a similar discussion for the special case of nonlinear programming appears in Section 3 of the work by  Jongen and Stein \cite{JS03}.
First, we recall that nondegeneracy for \eqref{eq:sym} at a point $x$ is the 
same as saying that the following condition holds: 
\begin{align}
w \in (\lineality \tanCone{g(x)}{\stdCone})^\perp,  \jac{g}(x)^*w + \jac{h}(x)^*v = 0&  \quad \implies \quad w = 0, v = 0. \tag{Nondegeneracy}\label{eq_nondegeneracy2}
\end{align}
On the other hand, LICQ holds for \eqref{eq:slack} at a point $(x,y)$ if 
the following  condition holds:
\begin{align}
\jProd{w}{y} = 0,\jac{g}(x)^*w + \jac{h}(x)^*v = 0 & \quad \implies \quad w = 0, v= 0. \tag{LICQ}\label{eq_licq}
\end{align}

We need the following auxiliary result. 
\begin{proposition}\label{prop:nondeg_licq}
	Let $z = \jProd{y}{y}$. Then $(\lineality \tanCone{z}{\stdCone})^\perp \subseteq \ker L_y$, where $\ker L_y$ is the kernel of $L_y$. 
	If $y \in \stdCone$, then $\ker L_y \subseteq (\lineality \tanCone{z}{\stdCone})^\perp$ as 
	well.
\end{proposition}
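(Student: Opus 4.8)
The plan is to make everything explicit in Peirce coordinates adapted to $y$ and then read off both inclusions from a single formula for $\jProd{y}{w}$. First I would fix a spectral decomposition $y = \sum_{i=1}^{r} \sigma_i c_i$ in which $\sigma_1, \dots, \sigma_s$ are the nonzero eigenvalues ($s = \matRank y$) and $\sigma_{s+1} = \cdots = \sigma_r = 0$. Since $z = \jProd{y}{y} = \sum_{i=1}^{r} \sigma_i^2 c_i$ and $\sigma_i^2 > 0$ exactly when $\sigma_i \neq 0$, the same Jordan frame $\{c_1, \ldots, c_r\}$ diagonalizes $z$, and the support idempotent $c = c_1 + \cdots + c_s$ coincides with the one appearing in the derivation of \eqref{eq:lin_cone}. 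Because $\jAlg = V(c,1) \oplus V(c,1/2) \oplus V(c,0)$ is an orthogonal direct sum and \eqref{eq:lin_cone} identifies $\lin \tanCone{z}{\stdCone}$ with $V(c,1) \oplus V(c,1/2)$, I would conclude that $(\lin \tanCone{z}{\stdCone})^\perp = V(c,0)$.

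Next I would pass to the second Peirce decomposition of Theorem~\ref{theo:peirce2}, writing an arbitrary $w = \sum_{i \leq j} w_{ij}$ with $w_{ij} \in V_{ij}$, and check from the action of each $c_k$ on $V_{ij}$ that $V(c,0) = \bigoplus_{s < i \leq j} V_{ij}$. Then, exactly as in the computation of $\jProd{y}{\lambda}$ in \eqref{eq:y_lb}, I would expand
$$
\jProd{y}{w} = \sum_{i=1}^{s} \sigma_i w_{ii} + \sum_{1 \leq i < j \leq s} \frac{\sigma_i + \sigma_j}{2}\, w_{ij} + \sum_{1 \leq i \leq s < j} \frac{\sigma_i}{2}\, w_{ij}.
$$
Since the $V_{ij}$ are mutually orthogonal, $\jProd{y}{w} = 0$ forces $w_{ii} = 0$ for $i \leq s$, $w_{ij} = 0$ for $i \leq s < j$, and $w_{ij} = 0$ whenever $i < j \leq s$ with $\sigma_i + \sigma_j \neq 0$, while the components $w_{ij}$ with $s < i \leq j$ remain unconstrained. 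This yields the explicit description
$$
\ker L_y = V(c,0) \,\oplus\! \bigoplus_{\substack{i < j \leq s \\ \sigma_i + \sigma_j = 0}}\! V_{ij}.
$$

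Both inclusions then fall out of this formula. The inclusion $V(c,0) \subseteq \ker L_y$, i.e. $(\lin \tanCone{z}{\stdCone})^\perp \subseteq \ker L_y$, is immediate and holds for every $y$, as $V(c,0)$ is one of the summands. For the reverse inclusion under the hypothesis $y \in \stdCone$, I would invoke item \ref{paux:1} of Proposition~\ref{prop:aux}: all eigenvalues of $y$ are nonnegative, so $\sigma_i > 0$ for every $i \leq s$, whence $\sigma_i + \sigma_j > 0$ for all $i < j \leq s$. The extra summands therefore vanish, giving $\ker L_y = V(c,0) = (\lin \tanCone{z}{\stdCone})^\perp$, and in particular $\ker L_y \subseteq (\lin \tanCone{z}{\stdCone})^\perp$.

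I do not expect a genuine obstacle here; the one point requiring care is recognizing that the blocks $V_{ij}$ with $\sigma_i + \sigma_j = 0$ are exactly what can make $\ker L_y$ strictly larger than $V(c,0)$, which is why the reverse inclusion truly needs $y \in \stdCone$ (the same phenomenon as condition $(b)$ in Theorem~\ref{theo:strict}). A minor bookkeeping remark is that some $V_{ij}$ may be $\{0\}$ when $i$ and $j$ lie in different simple blocks, but this only deletes terms and affects neither inclusion.
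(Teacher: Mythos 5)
Your proof is correct and follows essentially the same route as the paper's: both identify $(\lineality \tanCone{z}{\stdCone})^\perp$ with $V(c,0)$ via \eqref{eq:lin_cone}, expand $\jProd{y}{w}$ in the Peirce decomposition of Theorem~\ref{theo:peirce2} exactly as in \eqref{eq:y_lb}, and use positivity of the nonzero eigenvalues when $y \in \stdCone$ to kill the off-diagonal blocks. Your only (harmless) variation is to package both inclusions into a single explicit formula for $\ker L_y$, whereas the paper gets the first inclusion more directly from $\jProd{V(c,0)}{V(c,1)} = \{0\}$.
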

\begin{proof}
Using \eqref{eq:lin_cone}, we have 
$$
(\lineality \tanCone{z}{\stdCone})^\perp =  \{0\} \bigoplus \{0\} \bigoplus V(c,0),
$$
where we assume that  $z =  \sum _{i = 1}^{\matRank z} \eig _i c_i$ with $\eig _i > 0$ for $i \leq \matRank z$ and 
$c = c_1 + \cdots + c_{\matRank z}$.
Let $w \in V(c,0)$. Recall that $y$ and $z$ share a Jordan frame, so we may assume that  $y \in V(c,1)$. Since 
$ \jProd{V(c,0)}{V(c,1)} = \{0\}$, we see that $\jProd{y}{w} = 0$, that is, 
$w \in \ker L_y$. This shows that $(\lineality \tanCone{z}{\stdCone})^\perp \subseteq \ker L_y$.

Now, suppose that $y \in \stdCone$, $w \in \ker L_y$. Since $y$ is the square root of 
$z$ that belongs to $\stdCone$, we may assume that $y = \sum _i \sqrt{\eig _i} c_i$.
Then, we decompose $w$ as $w =\sum _{i\leq j} w_{ij}$, as 
in Theorem~\ref{theo:peirce2}, with $w_{ij} \in V_{ij}$. Then, as in \eqref{eq:y_lb}, we have
\begin{align*}
\jProd{y}{w} = \sum _{i = 1}^{\matRank y} \sqrt{\sigma _i} w_{ii} + \sum _{1\leq i < j \leq \matRank y} \left(\frac{\sqrt{\sigma_i} + \sqrt{\sigma _j}}{2}\right) w_{ij} + \sum _{1\leq i \leq \matRank y< j} \frac{\sqrt{\sigma _i} w_{ij}}{2} = 0. 
\end{align*}
The condition $\jProd{y}{w} = 0$, the fact that the $\sqrt{\eig _i}$ are positive, and the orthogonality among the $w_{ij}$ imply that 
$w = \sum _{\matRank y< i \leq j} w_{ij}$, so that $\jProd{w}{c} = 0$. Hence $w \in (\lineality \tanCone{z}{\stdCone})^\perp$.
\end{proof}

\begin{corollary}\label{col:licq_nondeg}
	If $(x,y) \in \Re^n \times \stdCone$ satisfies LICQ for  problem \eqref{eq:slack}, then nondegeneracy is 
	satisfied at $x$ for \eqref{eq:sym}. On the other hand, if $x$ satisfies nondegeneracy
	and if $y = \sqrt{g(x)}$, then $(x,y)$ satisfies LICQ for \eqref{eq:slack}.
\end{corollary}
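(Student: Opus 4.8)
The plan is to read both implications directly off Proposition~\ref{prop:nondeg_licq}, which for $z = \jProd{y}{y}$ supplies the unconditional inclusion $(\lineality \tanCone{z}{\stdCone})^\perp \subseteq \ker L_y$ and, when $y \in \stdCone$, the reverse inclusion as well. The key observation is that, stated abstractly, the two conditions to be compared are nearly identical: nondegeneracy \eqref{eq_nondegeneracy2} asks that $w \in (\lineality \tanCone{g(x)}{\stdCone})^\perp$ together with $\jac{g}(x)^*w + \jac{h}(x)^*v = 0$ forces $(w,v)=(0,0)$, while LICQ \eqref{eq_licq} asks exactly the same thing with the membership $w \in (\lineality \tanCone{g(x)}{\stdCone})^\perp$ replaced by $\jProd{y}{w} = 0$, i.e.\ $w \in \ker L_y$. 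In both parts I take $(x,y)$ feasible for \eqref{eq:slack}, so that $g(x) = \jProd{y}{y} =: z$ and hence $\tanCone{z}{\stdCone} = \tanCone{g(x)}{\stdCone}$; the two conditions then differ only in which of the two sets $\ker L_y$ or $(\lineality \tanCone{g(x)}{\stdCone})^\perp$ the vector $w$ is required to lie in, and Proposition~\ref{prop:nondeg_licq} is precisely what converts one into the other.

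For the forward implication, suppose $(x,y) \in \Re^n \times \stdCone$ satisfies LICQ and take any $(w,v)$ with $w \in (\lineality \tanCone{g(x)}{\stdCone})^\perp$ and $\jac{g}(x)^*w + \jac{h}(x)^*v = 0$. The unconditional inclusion of Proposition~\ref{prop:nondeg_licq} gives $w \in \ker L_y$, that is $\jProd{y}{w} = 0$, and then LICQ forces $w = 0$ and $v = 0$; this is exactly \eqref{eq_nondegeneracy2}. For the converse, assume $x$ is nondegenerate and $y = \sqrt{g(x)}$, so that $y \in \stdCone$ and $\jProd{y}{y} = g(x)$. Taking $(w,v)$ with $\jProd{y}{w} = 0$ and $\jac{g}(x)^*w + \jac{h}(x)^*v = 0$, the hypothesis $y \in \stdCone$ lets me invoke the reverse inclusion $\ker L_y \subseteq (\lineality \tanCone{g(x)}{\stdCone})^\perp$, which places $w$ in $(\lineality \tanCone{g(x)}{\stdCone})^\perp$; nondegeneracy then yields $w = 0$ and $v = 0$, which is \eqref{eq_licq}.

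There is essentially no hard step: all of the content has been front-loaded into Proposition~\ref{prop:nondeg_licq}, and what remains is bookkeeping. The one point demanding care is tracking which direction of that proposition is used where. The reverse inclusion $\ker L_y \subseteq (\lineality \tanCone{z}{\stdCone})^\perp$ is the one that genuinely requires $y \in \stdCone$, and it is exactly the inclusion needed for the converse implication; this is why the converse must use the square root taken \emph{inside} the cone, $y = \sqrt{g(x)}$, rather than an arbitrary square root of $g(x)$. The forward implication, by contrast, relies only on the unconditional inclusion, so the hypothesis $y \in \stdCone$ there is used only indirectly (to guarantee $g(x) = \jProd{y}{y} \in \stdCone$, and hence that nondegeneracy at $x$ is even meaningful). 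Making this asymmetry explicit is the only subtlety worth flagging in the write-up.
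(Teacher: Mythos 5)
Your proof is correct and takes exactly the paper's route: the paper's own proof is a one-line reduction to Proposition~\ref{prop:nondeg_licq} combined with \eqref{eq_licq} and \eqref{eq_nondegeneracy2}, and you have merely made the bookkeeping explicit, correctly using the unconditional inclusion $(\lineality \tanCone{g(x)}{\stdCone})^\perp \subseteq \ker L_y$ for the forward direction and the inclusion requiring $y \in \stdCone$ for the converse with $y = \sqrt{g(x)}$. One trivial aside: $\jProd{y}{y} \in \stdCone$ holds for \emph{every} $y \in \jAlg$ (the cone of squares), so the hypothesis $y \in \stdCone$ in the forward direction is not needed even for the indirect reason you give; this does not affect your argument.
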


\begin{proof}
Follows from combining Proposition~\ref{prop:nondeg_licq} with \eqref{eq_licq} and 
\eqref{eq_nondegeneracy2}.
\end{proof}

\section{Second order conditions for \eqref{eq:sym}}\label{sec:soc}
Using the connection between \eqref{eq:sym} and \eqref{eq:slack}, we can state the following 
second order conditions.

\begin{proposition}[A Sufficient Condition via Slack Variables]\label{prop:sosc_sym}
  Let $(x,\mult,\lambda) \in \Re^n \times \Re^m \times \jAlg$ be a KKT triple of problem~\eqref{eq:sym}. Suppose that
  \begin{equation}\label{eq:sosc_sym}
  \inner{\grad _x ^2 L(x,\mult,\lambda)v}{v} + 2\inner{\jProd{w}{w}}{\lambda} > 0, \tag{SOSC-NSCP}
  \end{equation}
  for every nonzero $(v,w) \in \Re^n \times \jAlg$ such that $\jac{g}(x)v - 2\jProd{\sqrt{g(x)}}{w} = 0$ and 
  $\jac{h}(x)v = 0$. Then, $x$~is a local minimum for \eqref{eq:sym}, $\lambda \in \stdCone$, and strict complementarity is satisfied.
\end{proposition}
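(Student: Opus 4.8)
The plan is to transfer the sufficient condition from the reformulated equality-constrained NLP \eqref{eq:slack} back to \eqref{eq:sym} by introducing the slack variable $y\doteq\sqrt{g(x)}$ and then assembling the results already established in Sections~\ref{sec:prel} and~\ref{sec:kkt}. First I would note that $y$ is well defined, since $(x,\mult,\lambda)$ being a KKT triple for \eqref{eq:sym} forces $g(x)\in\stdCone$, so the spectral square root exists. By the proposition preceding Proposition~\ref{prop:kkt} (the one lifting a KKT triple of \eqref{eq:sym} to a KKT quadruple of \eqref{eq:slack}), the tuple $(x,y,\mult,\lambda)$ is a KKT quadruple for \eqref{eq:slack}. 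The hypothesis \eqref{eq:sosc_sym} is then, term for term, exactly the inequality \eqref{eq:sosc_nlp.1} of Proposition~\ref{prop:sosc_slack}: with this choice of $y$ the constraint $\jac{g}(x)v - 2\jProd{y}{w} = 0$ becomes $\jac{g}(x)v - 2\jProd{\sqrt{g(x)}}{w} = 0$. Hence Proposition~\ref{prop:sosc_slack} applies and SOSC-NLP holds at $(x,y,\mult,\lambda)$.

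Second, because \eqref{eq:slack} is an equality-constrained NLP, the classical second order sufficient condition (see \cite[Theorem 12.6]{NW99} or \cite[Section 3.3]{Ber99}) guarantees that $(x,y)$ is a strict local minimizer of \eqref{eq:slack}; note that no constraint qualification is required for this sufficient direction. To pass from a local minimizer of \eqref{eq:slack} to one of \eqref{eq:sym}, I would use the continuity of the square root map on $\stdCone$: if $x'$ is feasible for \eqref{eq:sym} and close to $x$, then $g(x')\in\stdCone$ and $(x',\sqrt{g(x')})$ is feasible for \eqref{eq:slack}; by continuity of $g$ and of the spectral square root, this point lies near $(x,y)$, whence $f(x')\geq f(x)$. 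Therefore $x$ is a local minimum for \eqref{eq:sym}.

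Finally, the remaining conclusions come for free from the machinery already developed. Since SOSC-NLP holds at the KKT quadruple $(x,y,\mult,\lambda)$, item $(ii)$ of Proposition~\ref{prop:kkt2} immediately yields that $(x,\mult,\lambda)$ is a KKT triple for \eqref{eq:sym} satisfying strict complementarity; and membership $\lambda\in\stdCone$ is part of being such a triple, namely condition \eqref{eq:kkt_sym.2}. This closes all three assertions of the statement.

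I expect no single step to be a serious obstacle, since the architecture of the paper is designed precisely to make this transfer routine: the genuine content has already been absorbed into Proposition~\ref{prop:kkt2}(ii), which itself rests on Theorem~\ref{theo:strict}. The one place that demands a little care is the local-minimum transfer between \eqref{eq:slack} and \eqref{eq:sym}, where one must verify that feasible points of \eqref{eq:sym} near $x$ correspond, via $y=\sqrt{g(x)}$, to feasible points of \eqref{eq:slack} near $(x,y)$; this is exactly where continuity of the spectral square root on $\stdCone$ is invoked, and it is the only analytic (as opposed to algebraic) ingredient in the argument.
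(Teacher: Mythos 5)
Your proposal is correct and takes essentially the same approach as the paper: lift the KKT triple to a KKT quadruple of \eqref{eq:slack} via $y=\sqrt{g(x)}$, invoke Proposition~\ref{prop:sosc_slack} to get SOSC-NLP (hence local minimality, transferred back to \eqref{eq:sym}), and deduce $\lambda\in\stdCone$ together with strict complementarity from the $v=0$ specialization and Theorem~\ref{theo:strict}. Your only departures are cosmetic: you route the final step through Proposition~\ref{prop:kkt2}(ii), whose proof is exactly that $v=0$ argument, and you spell out the local-minimum transfer (via continuity of the spectral square root) that the paper leaves implicit in its remark that \eqref{eq:sym} and \eqref{eq:slack} share local minima.
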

\begin{proof}
If $(x,\mult,\lambda)$ is a KKT triple for \eqref{eq:sym}, then 	$(x,\sqrt{g(x)},\mult,\lambda)$ is a 
KKT quadruple for \eqref{eq:slack}. Then, from Proposition~\ref{prop:sosc_slack}, we conclude 
that $x$ must be a local minimum. Taking $v = 0$ in \eqref{eq:sosc_sym}, we see that 
$$
\inner{\jProd{w}{w}}{\lambda} > 0,
$$
for all $w$ such that $\jProd{\sqrt{g(x)}}{w} = 0$. Due to Theorem~\ref{theo:strict}, we have $\lambda \in \stdCone$ and 
$\matRank \sqrt{g(x)} + \matRank \lambda = r$. As $\matRank\sqrt{g(x)} = \matRank {g(x)}$, we conclude 
that strict complementarity is satisfied.  
\end{proof}
Interestingly, the condition in Proposition~\ref{prop:sosc_sym} is strong enough to ensure strict complementarity. 
And, in fact, when strict complementarity holds and $\stdCone$ is either the cone of positive semidefinite matrices or a product of Lorentz cones, 
the condition in Proposition~\ref{prop:sosc_sym} is equivalent to the second order sufficient conditions described 
by Shapiro \cite{shapiro97} and Bonnans and Ram\'{i}rez \cite{BR05}. See \cite{FF16}, \cite{FF17} and \cite{LFF16} for more details.
We also have the following necessary condition.

\begin{proposition}[A Necessary Condition via Slack Variables]\label{prop:sonc_sym}
	Let $x \in \Re^n$ be a local minimum of \eqref{eq:sym}.
	Assume that $(x,\mult,\lambda)  \in \Re^n \times \Re^m \times \jAlg$ is a KKT triple for \eqref{eq:sym} satisfying 
	nondegeneracy. Then the following condition holds:
	\begin{equation}
	\inner{\grad _x ^2 L(x,\mu,\lambda)v}{v} + 2\inner{\jProd{w}{w}}{\lambda} \geq 0, \tag{SONC-NSCP}
	\end{equation}
	for every $(v,w) \in \Re^n \times \jAlg$ such that $\jac{g}(x)v - 2\jProd{\sqrt{g(x)}}{w} = 0$ and $\jac{h}(x)v = 0$.
\end{proposition}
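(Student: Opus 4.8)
The plan is to transfer the statement to the slack reformulation \eqref{eq:slack} and apply the necessary condition already in hand for it, Proposition~\ref{prop:sonc_slack}. Set $y \doteq \sqrt{g(x)}$. Since the objective of \eqref{eq:slack} does not depend on $y$, the pair $(x,y)$ is a local minimum of \eqref{eq:slack}: indeed, any feasible $(x',y')$ for \eqref{eq:slack} near $(x,y)$ satisfies $g(x') = \jProd{y'}{y'} \in \stdCone$, so $x'$ is feasible for \eqref{eq:sym} and close to $x$, whence $f(x') \geq f(x)$. I would first record this, together with the fact established earlier that $(x,\sqrt{g(x)},\mult,\lambda)$ is a KKT quadruple for \eqref{eq:slack}.

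The crux is verifying LICQ at $(x,y)$ so that Proposition~\ref{prop:sonc_slack} may be invoked. Here I would use that $y = \sqrt{g(x)} \in \stdCone$, which holds because $g(x) \in \stdCone$ forces its square root into $\stdCone$. Then Corollary~\ref{col:licq_nondeg}, in its direction ``nondegeneracy $\Rightarrow$ LICQ'', shows that the nondegeneracy of $x$ for \eqref{eq:sym} guarantees LICQ for \eqref{eq:slack} at $(x,\sqrt{g(x)})$. This is the only place where the nondegeneracy hypothesis is consumed, so it is the step most deserving of care; everything else is bookkeeping through the slack-variable dictionary.

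With LICQ established, Proposition~\ref{prop:sonc_slack} delivers SONC-NLP at $(x,y,\mult,\lambda)$, namely
\begin{equation*}
\inner{\grad_x^2 L(x,\mult,\lambda)v}{v} + 2\inner{\jProd{w}{w}}{\lambda} \geq 0
\end{equation*}
for all $(v,w) \in \Re^n \times \jAlg$ with $\jac{g}(x)v - 2\jProd{y}{w} = 0$ and $\jac{h}(x)v = 0$. Substituting $y = \sqrt{g(x)}$ turns the first constraint into $\jac{g}(x)v - 2\jProd{\sqrt{g(x)}}{w} = 0$, which is exactly the constraint in SONC-NSCP; the displayed inequality is then SONC-NSCP and the proof is complete. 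I anticipate no real difficulty beyond the LICQ/nondegeneracy translation, since this proposition is the ``necessary'' counterpart of Proposition~\ref{prop:sosc_sym} and the reformulation carries the argument.
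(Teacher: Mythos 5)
Your proposal is correct and takes essentially the same route as the paper's own proof: pass to \eqref{eq:slack} with $y = \sqrt{g(x)}$, observe that $(x,y)$ is a local minimum and a KKT point of \eqref{eq:slack}, convert nondegeneracy into LICQ via Corollary~\ref{col:licq_nondeg}, and invoke Proposition~\ref{prop:sonc_slack}. The only difference is that you spell out the short verification that $(x,y)$ is a local minimum of \eqref{eq:slack}, which the paper asserts without elaboration.
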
 
\begin{proof}
If $(x,\mult,\lambda)$ is a KKT triple for \eqref{eq:sym}, then 	$(x,\sqrt{g(x)},\mult,\lambda)$ is a 
KKT quadruple for \eqref{eq:slack}. Moreover, if $x$ is a local minimum for \eqref{eq:sym}, then 
$(x,y)$ is a local minimum for \eqref{eq:slack}. As $x$ satisfies nondegeneracy, LICQ is 
satisfied at  $(x,y)$, so that we are under the hypothesis of Proposition~\ref{prop:sonc_slack}.
\end{proof}

\section{A simple augmented Lagrangian method and its convergence}\label{sec:alg}

In \cite{N07}, Noll warns against naively extending algorithms for NLPs
to  nonlinear conic programming. One of the reasons is that those 
extensions often use unrealistic second order conditions which ignore the extra 
terms that appear in no-gap SOSCs for nonlinear cones. He then argues that such
conditions are unlikely to hold in practice. He goes 
 on to prove convergence results for an augmented Lagrangian method for  NSDPs based 
on the no-gap optimality conditions obtained by Shapiro \cite{shapiro97}.

We have already shown in \cite{LFF16} that if $\stdCone = \PSDcone{n}$, then Shapiro's SOSC for
\eqref{eq:sym} and the classical SOSC for  \eqref{eq:slack} are equivalent, under strict 
complementarity, see Propositions 10, 11, 13 and 14 therein. 
This suggests 
that it is viable to design appropriate algorithms for \eqref{eq:sym} by studying 
the NLP formulation \eqref{eq:slack} and  importing convergence results from nonlinear programming 
theory while avoiding the issues described by Noll. Furthermore, in some cases, we can remove the slack variable $y$ altogether from the final algorithm.
We will illustrate this approach by describing an augmented Lagrangian method for 
\eqref{eq:sym}.

Bertsekas also suggested a similar approach in \cite{Ber82a}, where he analyzed augmented Lagrangian methods
for \emph{inequality} constrained NLPs by first reformulating them as \emph{equality} constrained 
NLPs with the aid of squared slack variables. Kleinmichel and Sch\"onefeld described a method for NLPs in \cite{KS88} where squared slack variables were used not only to deal with the inequality constraints but in place of the Lagrange multipliers, as a way to force them to be nonnegative.  More recently, Sun, Sun and Zhang \cite{SSZ08} showed how to obtain a 
convergence rate result for an augmented Lagrangian method for NSDPs using slack variables, under the hypothesis of 
strict complementarity\footnote{We remark that their main contribution was to show a convergence rate result using the strong second order sufficient condition, nondegeneracy but without strict complementarity.}, 
see Theorem 3 therein. Here we take a closer look at this topic and 
extend their Theorem 3.

\subsection{Augmented Lagrangian method for \eqref{eq:sym}}
\label{sec:methods}

Let $\varphi: \Re^n \to \Re$, $\psi: \Re^n \to \Re^m$ be twice differentiable 
functions and consider the following NLP:
\begin{equation}
  \label{eq:nlp}
  \begin{array}{ll}
    \underset{x}{\mbox{minimize}} & \varphi(x) \\ 
    \mbox{subject to} & \psi(x) = 0.
  \end{array}
\end{equation}
Following Section 17.3 in \cite{NW99}, given a multiplier $\lambda \in \Re^m$ and a penalty parameter $\augP \in \Re$, define the augmented Lagrangian $\aug{\augP}:\Re^n\times \Re^m \to \Re$ for \eqref{eq:nlp} by 
$$
\aug{\augP} (x,\lambda) = \varphi(x )- \inner{\psi(x)}{\lambda} + \frac{\augP}{2}\norm{\psi(x)}^2.
$$
For problem \eqref{eq:slack}, the augmented Lagrangian is given by 
\begin{equation}
\augSlack (x,y, \mult,\lambda) = f(x) - \inner{h(x)}{\mult} + \frac{\augP}{2}\norm{h(x)}^2 - \inner{g(x) - \jProd{y}{y}}{\lambda} +    \frac{\augP}{2}\norm{g(x) - \jProd{y}{y}}^2. \label{eq:aug}
\end{equation}
We  have the following basic augmented Lagrangian method.

\begin{algorithm}[H]
	\caption{Augmented Lagrangian Method for \eqref{eq:slack} }\label{alg:aug_slack}
	
	Choose  initial points $x_1, y_1$, initial multipliers $\mult _1, \lambda _1$ and an initial penalty $\augP_1$. \\
	$k \leftarrow 1$. \\
	Let $(x_{k+1},y_{k+1})$ be a minimizer of $\augSlackP{\augP_k}(\cdot,\cdot, \mult _k, \lambda _k)$.  \label{alg:aug_slack:it} \\
	$\mult _{k+1} \leftarrow \mult _k - \augP_k h(x_{k+1})$.\\
	 $\lambda _{k+1} \leftarrow \lambda _k -  \augP_k (g(x_{k+1}) - \jProd{y_{k+1}}{y_{k+1}} )$. \label{alg:aug_slack:mult}\\		
	Choose a new penalty $\augP_{k+1} $ with $\augP_{k+1} \geq \augP_{k}$.\\
	Let $k \leftarrow k + 1$ and return to Step \ref{alg:aug_slack:it}.
\end{algorithm}

We will show  how to remove the slack variable from the augmented Lagrangian.

\begin{proposition}\label{prop:alg}
The following equation holds:
\begin{equation}
\min _{y} \augSlack (x,y,\mu, \lambda) = f(x)  - \inner{h(x)}{\mu} + \frac{\augP}{2}\norm{h(x)}^2  +\frac{1}{2\augP}\left(- \norm{\lambda}^2+    
\norm{\proj{ \lambda -\augP g(x)}}^2 \right).\label{eq:aug_sym}
\end{equation}
Moreover if $(x^*,y^*)$ is a minimum of $\augSlack (\cdot,\cdot,\mu, \lambda)$, then 
$
\jProd{y^*}{y^*} =  \proj{g(x^*) - \frac{\lambda}{\augP}}.
$
\end{proposition}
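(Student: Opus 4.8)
The plan is to peel off the $y$-dependent part of $\augSlack$, recognize the inner minimization as a projection problem onto $\stdCone$, and then simplify the optimal value with the Moreau-type identity from Lemma~\ref{lemma:moreau}.

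First I would fix $x,\mu,\lambda$ and collect the terms of $\augSlack(x,y,\mu,\lambda)$ that depend on $y$. Writing $u := g(x)$ and distributing the inner product, the $y$-independent part is $f(x) - \inner{h(x)}{\mu} + \frac{\augP}{2}\norm{h(x)}^2 - \inner{u}{\lambda}$, while the $y$-dependent part is $\inner{\jProd{y}{y}}{\lambda} + \frac{\augP}{2}\norm{u - \jProd{y}{y}}^2$. The crucial observation is that, since $\stdCone$ is precisely the cone of squares, the map $y \mapsto \jProd{y}{y}$ surjects onto $\stdCone$; hence minimizing over $y \in \jAlg$ is equivalent to minimizing over $s \in \stdCone$ under the substitution $s = \jProd{y}{y}$.

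Next I would complete the square in $s$. The reduced objective $\inner{s}{\lambda} + \frac{\augP}{2}\norm{u - s}^2$ equals $\frac{\augP}{2}\norm{s - (u - \lambda/\augP)}^2$ plus a term independent of $s$, so the minimizer over $s \in \stdCone$ is the orthogonal projection $s^* = \proj{u - \lambda/\augP} = \proj{g(x) - \lambda/\augP}$. This already establishes the ``moreover'' claim, since any minimizer $(x^*,y^*)$ must satisfy $\jProd{y^*}{y^*} = s^*$.

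Finally I would substitute $s = s^*$ and simplify the value. The residual $\norm{s^* - (u-\lambda/\augP)}^2$ equals, by Lemma~\ref{lemma:moreau}, $\norm{\proj{\lambda/\augP - u}}^2$, and by positive homogeneity of the projection onto $\stdCone$ (immediate from Proposition~\ref{prop:proj} since $\augP > 0$) this is $\frac{1}{\augP^2}\norm{\proj{\lambda - \augP g(x)}}^2$. Tracking the quadratic remainder from the completion of squares, one finds it contributes $\inner{u}{\lambda} - \frac{1}{2\augP}\norm{\lambda}^2$, so the $\inner{u}{\lambda}$ term cancels against the $-\inner{u}{\lambda}$ in the $y$-independent part, leaving exactly $\frac{1}{2\augP}\left(-\norm{\lambda}^2 + \norm{\proj{\lambda - \augP g(x)}}^2\right)$ added to $f(x) - \inner{h(x)}{\mu} + \frac{\augP}{2}\norm{h(x)}^2$, as claimed. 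I expect the only delicate step to be this last bit of bookkeeping---keeping straight which constant-in-$s$ cross terms appear and verifying the cancellation; the conceptual core, namely the reduction to a projection, is handled cleanly by the completion of squares.
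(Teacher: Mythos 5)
Your proposal is correct and follows essentially the same route as the paper's proof: isolate the $y$-dependent terms, reduce $\min_y$ to a projection onto $\stdCone$ via the cone-of-squares substitution, complete the square, and finish with Lemma~\ref{lemma:moreau} plus positive homogeneity of $\projC{\stdCone}$. The only cosmetic difference is that the paper completes the square in the residual $g(x)-\jProd{y}{y}$ (so no $\inner{g(x)}{\lambda}$ cross-term bookkeeping is needed), whereas you complete it in $s=\jProd{y}{y}$ and then verify the cancellation explicitly; both computations are equivalent.
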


\begin{proof}
In the partial minimization $\min _{y} \augSlack (x,y, \mu, \lambda)$, we look at the terms that depend on $y$.
\begin{align*}
\min _{y}\left(-\inner{g(x) - \jProd{y}{y}}{\lambda} + \frac{\augP}{2}\norm{g(x) - \jProd{y}{y}}^2\right)  
\end{align*}
 \vspace{-1.5\baselineskip}
\begin{flalign*}
 & &&=   \min _{y}\left(-\inner{g(x) - \jProd{y}{y}}{\lambda} + \frac{\augP}{2}\norm{g(x)  - \frac{\lambda}{\augP}- \jProd{y}{y}+ \frac{\lambda}{\augP}}^2\right)\\
&&&=  \min _{y}\left(  \frac{\augP}{2}
\norm{g(x)  - \frac{\lambda}{\augP}- \jProd{y}{y}}^2 - \frac{\norm{\lambda}^2}{2\augP} \right) \\
 & &&= - \frac{\norm{\lambda}^2}{2\augP}+  \frac{\augP}{2}\min _{y}
\norm{g(x)  - \frac{\lambda}{\augP} - \jProd{y}{y}}^2.   
\end{flalign*}
Note that 
\begin{equation}
\min _{y}
\norm{g(x)  - \frac{\lambda}{\augP} - \jProd{y}{y}}^2  = \min _{z \in \stdCone}
\norm{g(x)  - \frac{\lambda}{\augP} - z}^2. \label{eq:aug_aux}
\end{equation}
Then, \eqref{eq:aug_aux} together with Lemma~\ref{lemma:moreau}  implies that
\begin{flalign*}
 - \frac{\norm{\lambda}^2}{2\augP}+  \frac{\augP}{2}\min _{y}
 \norm{g(x)  - \frac{\lambda}{\augP} - \jProd{y}{y}}^2  &= - \frac{\norm{\lambda}^2}{2\augP}+    \frac{\augP}{2}
\norm{P_{\stdCone}\left( \frac{\lambda}{\augP}-g(x)  \right)}^2   \\
  &= - \frac{\norm{\lambda}^2}{2\augP}+    \frac{1}{2\augP}
\norm{\proj{ \lambda -\augP g(x)}}^2.
\end{flalign*}
It follows that
$$
\min _{y}  \augSlackP{\augP} (x,y, \mu, \lambda) = f(x)  - \inner{h(x)}{\mu} + \frac{\augP}{2}\norm{h(x)}^2  +\frac{1}{2\augP}\left(- \norm{\lambda}^2+    
\norm{\proj{ \lambda -\augP g(x)}}^2 \right)
$$
and hence \eqref{eq:aug_sym} holds.

Finally, note that \eqref{eq:aug_aux} implies that if $(x^*,y^*)$ 
is a  minimum of $\augSlack (\cdot,\cdot,\mu, \lambda)$, 
then $\jProd{y^*}{y^*} = \proj{  g(x^*) - \frac{\lambda}{\augP} }$. 
\end{proof}

Proposition~\ref{prop:alg} suggests  the following augmented Lagrangian for \eqref{eq:sym}:
\begin{equation}
\augCone(x,\mu,\lambda) = f(x)  - \inner{h(x)}{\mu} + \frac{\augP}{2}\norm{h(x)}^2  +\frac{1}{2\augP}\left(- \norm{\lambda}^2+\norm{\proj{ \lambda -\augP g(x)}}^2 \right). \label{eq:aug_cone}
\end{equation}

Moreover, due to Lemma~\ref{lemma:moreau} and Proposition~\ref{prop:alg}, we can write the 
multiplier update in Step \ref{alg:aug_slack:mult} of Algorithm \ref{alg:aug_slack} as 
$$
\lambda _{k+1} \leftarrow \proj{ \lambda _k -  \augP_k g(x_{k+1})}.
$$

This gives rise to the following augmented Lagrangian method for \eqref{eq:sym}. 
Note that the squared slack variable $y$ is absent.

\begin{algorithm}[H]
	\caption{Augmented Lagrangian Method for \eqref{eq:sym} }\label{alg:aug_sym}
	
	Choose  an initial point $x_1$, initial multipliers $\mu _1, \lambda _1$ and an initial penalty $\augP_1$. \\
	$k \leftarrow 1$. \\
	Let $x_{k+1}$ be a minimizer of $\augConeP{\augP_k}(\cdot, \mu _k, \lambda _k)$.  \label{alg:aug_sym:it} \\
	$\mu _{k+1} \leftarrow \mu _k - \augP_k h(x_{k+1})$.\\
	$\lambda _{k+1} \leftarrow \proj{ \lambda _k -  \augP_k g(x_{k+1})}$. \\		
	Choose a new penalty  $\augP_{k+1} $ with $\augP_{k+1} \geq \augP_{k}$.\\
	Let $k \leftarrow k + 1$ and return to Step \ref{alg:aug_sym:it}.
\end{algorithm}
Note that Algorithms \ref{alg:aug_slack} and \ref{alg:aug_sym} are equivalent in the sense that any sequence of iterates 
$(x_k,y_k,\mu_k,\lambda _k)$ for Algorithm \ref{alg:aug_slack} is such that $(x_k,\mu_k,\lambda _k) $ is a valid sequence of 
iterates for Algorithm  \ref{alg:aug_sym}. Conversely, given a sequence $(x_k,\mu_k,\lambda _k) $ for Algorithm \ref{alg:aug_sym}, the sequence  
$(x_k, \sqrt{\proj{g(x_k) - \lambda/\augP_k}},\mu_k,\lambda _k)$ is valid for Algorithm \ref{alg:aug_slack}.
However, there could be computational differences between both algorithms. 
On the one hand, the subproblem in Algorithm \ref{alg:aug_sym} has less variables than the subproblem in Algorithm \ref{alg:aug_slack}. 
On the other hand, if $f,h,g$ are twice differentiable, the same is true for $\augSlack(\cdot,\cdot,\mu,\rho) $, while  $\augCone(\cdot,\mu,\rho)$ will not necessarily be twice differentiable in general. 
In this sense, the subproblem in Algorithm \ref{alg:aug_slack} is smoother than the one in Algorithm~\ref{alg:aug_sym}. 
In the appendix, we describe some preliminary numerical experiments aimed at exploring the difference between both approaches.

Note also that when $\stdCone$ is the cone of positive semidefinite matrices or a product of second order cones, Algorithm \ref{alg:aug_sym} gives 
 exactly the same augmented Lagrangian method with quadratic penalty discussed extensively in the literature. 
This is because, due to Proposition~\ref{prop:proj}, the projection
$\proj{ \lambda _k -  \augP_k g(x_{k+1})}$ is just the result of zeroing the negative eigenvalues of $ \lambda _k -  \augP_k g(x_{k+1})$.

\subsection{Convergence results}
Here, we will reinterpret a result of \cite{Ber82a}. We will then use it to prove an analogous theorem for \eqref{eq:sym}. 
This extends Theorem~3 in \cite{SSZ08} for all nonlinear symmetric cone programs. 

\begin{proposition}[Proposition 2.4 in \cite{Ber82a}]\label{prop:aug_conv_slack}
Suppose that $(x^*,y^*,\mult^*,\lambda^*) \in  \Re^n \times \jAlg \times \Re^m \times \jAlg$ is a KKT quadruple for \eqref{eq:slack} such that 
\begin{itemize}
	\item \eqref{eq:sosc_nlp.1} is satisfied,
	\item LICQ is satisfied.
\end{itemize}
Moreover, let $\hat{\augP}$ be such that $\grad ^2 \augSlackP{\hat{\augP}}$ is positive definite\footnote{Such a $\hat{\augP}$ always exists, see the remarks before Proposition~2.4 in 
\cite{Ber82a}.}. Then there 
are positive scalars $\hat \delta, \hat \epsilon, \hat M$ such that
\begin{enumerate}
\item for all $(\mult, \lambda, \augP)$ in the set $\hat D \coloneqq \{ (\mult, \lambda, \rho) \mid \abs{\mult - \mult^*} +   \abs{\lambda - \lambda^*} < \hat \delta \augP, \hat{\augP} \leq \augP  \}$, the following problem 
has a unique solution:
\begin{equation}
\label{}
\begin{array}{ll}
\underset{x,y}{\mbox{minimize}} & \augSlackP{{\augP}}(x,y,\mult,\lambda) \label{eq:local_al_slack} \\ 
\mbox{subject to} & (x,y) \in \ball{x^*}{\hat \epsilon}\times \ball{y^*}{\hat \epsilon},
\end{array}
\end{equation}
where  $\ball{x^*}{\hat \epsilon} \subseteq \Re^n$ and $\ball{y^*}{\hat \epsilon} \subseteq \jAlg$ are the spheres with radius $\hat \epsilon$ centered at $x^*$ and 
$y^*$, respectively.
Denote such a solution by $(x(\mult,\lambda,\augP),y(\mult,\lambda,\augP))$. Then, $(x(\cdot,\cdot,\cdot), y(\cdot,\cdot,\cdot))$ is continuously differentiable in 
the interior of $\hat D$ and satisfies
\begin{equation}
\abs{(x(\mult,\lambda,\augP),y(\mult,\lambda,\augP))-(x^*,y^*)} \leq \frac{\hat M}{\augP}\abs{(\mult, \lambda) - (\mult^*, \lambda^*)}, \label{prop:aug_conv_slack:1}
\end{equation}
 for all $(\mult, \lambda, \augP) \in \hat D$.
 
\item For all $(\mult, \lambda, \augP) \in \hat D$, we have
\begin{equation}
\abs{(\hat \mult(\mult,\lambda,\augP),\hat \lambda(\mult,\lambda,\augP))-(\mult^*, \lambda^*)} \leq \frac{\hat M}{\augP}\abs{(\mult, \lambda) - (\mult^*, \lambda^*)}, \label{prop:aug_conv_slack:2}
\end{equation}
where
\begin{align*}
\hat \mult(\mult,\lambda,\augP) & =  \mult - \augP h(x(\mult,\lambda,\augP)), \\
\hat \lambda(\mult,\lambda,\augP) & = \lambda -\augP(g(x(\mult,\lambda,\augP)) - y(\mult,\lambda,\augP)^2 ).
\end{align*}

\end{enumerate}
\end{proposition}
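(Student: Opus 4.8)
The plan is to observe that \eqref{eq:slack} is, despite its conic origin, an ordinary twice continuously differentiable equality-constrained nonlinear program in the variables $(x,y) \in \Re^n \times \jAlg$: the map $y \mapsto \jProd{y}{y}$ is quadratic, hence $C^\infty$, because $\jProd{}{}$ is bilinear and $\jAlg$ is a finite-dimensional inner product space, so the constraints $h(x)=0$ and $g(x)-\jProd{y}{y}=0$ are as smooth as $f,g,h$. Consequently Bertsekas's sensitivity and convergence theory for the method of multipliers applies verbatim, and the entire statement is a transcription of Proposition~2.4 in \cite{Ber82a} into the present setting. The proof therefore reduces to checking that the augmented Lagrangian and the two hypotheses stated here coincide exactly with the data required by that proposition.

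First I would match the objects. The augmented Lagrangian $\augSlack$ of \eqref{eq:aug} is precisely the quadratic augmented Lagrangian associated with this NLP, and the maps $\hat\mult, \hat\lambda$ are its first-order multiplier updates, i.e.\ the updates used in Algorithm~\ref{alg:aug_slack}. Next, the regularity hypothesis of Bertsekas's result---linear independence of the gradients of the (here, all) constraints at $(x^*,y^*)$---is exactly our LICQ assumption. Finally, Bertsekas requires the standard second-order sufficient condition, namely positive definiteness of the NLP Lagrangian Hessian $\grad_{(x,y)}^2 \mathcal{L}(x^*,y^*,\mult^*,\lambda^*)$ on the tangent space $\{(v,w) : \jac{g}(x^*)v - 2\jProd{y^*}{w} = 0,\ \jac{h}(x^*)v = 0\}$ of the constraints; by Proposition~\ref{prop:sosc_slack} this is equivalent to our assumption \eqref{eq:sosc_nlp.1}. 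Hence all hypotheses of the cited proposition are met.

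With this dictionary in place, the existence of a $\hat\augP$ making $\grad^2 \augSlackP{\hat\augP}$ positive definite is guaranteed by the preliminary remarks referenced in the footnote, and the two numbered conclusions---local existence, uniqueness and $C^1$ dependence of the minimizer together with the bound \eqref{prop:aug_conv_slack:1}, and the multiplier contraction \eqref{prop:aug_conv_slack:2}---are simply Proposition~2.4 of \cite{Ber82a} rewritten in our notation. I expect the only genuinely delicate point to be the reconciliation of the two forms of the second-order condition: the NLP theory is phrased with the full Hessian $\grad_{(x,y)}^2 \mathcal{L}$, whereas \eqref{eq:sosc_nlp.1} is written with $\grad_x^2 L$ of the NSCP Lagrangian plus the curvature term $2\inner{\jProd{w}{w}}{\lambda}$; bridging these is exactly the computation carried out in Proposition~\ref{prop:sosc_slack}, which I would invoke rather than redo. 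A minor additional care point is that Bertsekas's multiplier norm must be read as the inner-product norm on $\Re^m \times \jAlg$, but since all norms on a finite-dimensional space are equivalent, this affects only the constants $\hat\delta, \hat\epsilon, \hat M$ and not the qualitative statement.
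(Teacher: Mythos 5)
Your proposal is correct and matches the paper exactly: the paper states this result with the bracketed attribution ``Proposition 2.4 in \cite{Ber82a}'' and offers no proof of its own, precisely because the content is nothing more than Bertsekas's sensitivity theorem for equality-constrained NLPs applied to the smooth reformulation \eqref{eq:slack}, with LICQ and \eqref{eq:sosc_nlp.1} (via Proposition~\ref{prop:sosc_slack}) playing the roles of the regularity and second-order hypotheses. Your dictionary between the two settings, including the identification of $\jAlg$ with a Euclidean space and the use of Proposition~\ref{prop:sosc_slack} to reconcile the two forms of the second-order condition, is exactly the verification the paper leaves implicit.
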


Our goal is to prove the following result.
\begin{proposition}\label{prop:aug_conv_sym}
	Suppose that $(x^*,\mult^*,\lambda^*) \in  \Re^n \times \Re^m \times \jAlg$ is a KKT triple for \eqref{eq:sym} such that 
	\begin{itemize}
		\item \eqref{eq:sosc_sym} is satisfied,
		\item Nondegeneracy (see Definition \ref{def:nondeg}) is satisfied.
	\end{itemize}
Then there 
	are positive scalars $\delta, \epsilon, M, \overline \augP$ such that
	\begin{enumerate}
		\item For all $(\mult, \lambda, \augP)$ in the set $D \coloneqq \{ (\mult, \lambda, \rho) \mid \abs{\mult - \mult^*} +   \abs{\lambda - \lambda^*} < \delta \augP, \overline{\augP} \leq \augP  \}$, the following problem 
		has a unique solution:
		\begin{align}
		\underset{x}{\text{minimize}} & \quad \augConeP{\augP}(x,\mult,\lambda) \label{eq:theo_min2}\\ 
		\text{subject to} &\quad  x \in \ball{x^*}{\epsilon}. \nonumber
		\end{align}

		Denote such a solution by $x(\mult,\lambda,\augP)$. Then, $x(\cdot,\cdot,\cdot)$ is continuously differentiable in 
		the interior of $D$ and satisfies
		\begin{equation}
		\abs{x(\mult,\lambda,\augP)-x^*} \leq \frac{M}{\augP}\abs{(\mult, \lambda) - (\mult^*, \lambda^*)} \label{prop:aug_conv_sym:1}
		\end{equation}
		for all $(\mult, \lambda, \augP) \in D$.
		
		\item For all $(\mult, \lambda, \augP) \in D$, we have
		\begin{equation}
		\abs{(\overline \mult(\mult,\lambda,\augP),\overline  \lambda(\mult,\lambda,\augP))-(\mult^*, \lambda^*)} \leq \frac{M}{\augP}\abs{(\mult, \lambda) - (\mult^*, \lambda^*)}, \label{prop:aug_conv_sym:2}
		\end{equation}
		where
		\begin{align*}
		\overline  \mult(\mult,\lambda,\augP) & =  \mult - \augP h(x(\mult,\lambda,\augP)), \\
		\overline  \lambda(\mult,\lambda,\augP) & =  \proj{ \lambda -  \augP g(x(\mult,\lambda,\augP))}.
		\end{align*}
		
	\end{enumerate}
\end{proposition}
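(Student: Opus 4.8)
The plan is to reduce the entire statement to Proposition~\ref{prop:aug_conv_slack} applied to the slack reformulation \eqref{eq:slack}, and then use the partial-minimization identity of Proposition~\ref{prop:alg} to carry the conclusions back to \eqref{eq:sym}. First I would set $y^* = \sqrt{g(x^*)}$, which is well-defined and lies in $\stdCone$ because $g(x^*) \in \stdCone$ by the KKT conditions. By the first proposition of Section~\ref{sec:kkt}, $(x^*,y^*,\mult^*,\lambda^*)$ is then a KKT quadruple for \eqref{eq:slack}. The condition \eqref{eq:sosc_sym} is, by inspection, exactly the inequality \eqref{eq:sosc_nlp.1} evaluated at $y = \sqrt{g(x^*)}$, since the defining linear constraints $\jac{g}(x)v - 2\jProd{\sqrt{g(x)}}{w} = 0$ and $\jac{h}(x)v = 0$ coincide; hence Proposition~\ref{prop:sosc_slack} gives that SOSC-NLP holds at the quadruple. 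Because $x^*$ is nondegenerate for \eqref{eq:sym} and $y^* = \sqrt{g(x^*)} \in \stdCone$, Corollary~\ref{col:licq_nondeg} yields LICQ at $(x^*,y^*)$ for \eqref{eq:slack}. We are therefore in the hypotheses of Proposition~\ref{prop:aug_conv_slack}, which produces scalars $\hat\delta,\hat\epsilon,\hat M,\hat\augP$ and a continuously differentiable solution map $(\mult,\lambda,\augP)\mapsto(x(\mult,\lambda,\augP),y(\mult,\lambda,\augP))$ for the subproblem over $\ball{x^*}{\hat\epsilon}\times\ball{y^*}{\hat\epsilon}$, together with the estimates \eqref{prop:aug_conv_slack:1} and \eqref{prop:aug_conv_slack:2}.

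Next I would transfer this to \eqref{eq:sym} via Proposition~\ref{prop:alg}, which asserts $\min_y \augSlack(x,y,\mult,\lambda) = \augCone(x,\mult,\lambda)$, with the minimizing slack characterized by $\jProd{y}{y} = \proj{g(x) - \lambda/\augP}$. The aim is to show that, after possibly shrinking $\hat\delta,\hat\epsilon$ and enlarging $\hat\augP$ to obtain $\delta,\epsilon,\overline\augP$, the $x$-component $x(\mult,\lambda,\augP)$ is precisely the unique minimizer of $\augConeP{\augP}(\cdot,\mult,\lambda)$ over $\ball{x^*}{\epsilon}$. Granting this, estimate \eqref{prop:aug_conv_sym:1} is immediate from \eqref{prop:aug_conv_slack:1} via $\abs{x(\cdot)-x^*}\leq\abs{(x(\cdot),y(\cdot))-(x^*,y^*)}$, and continuous differentiability of $x(\cdot)$ is inherited from that of the pair. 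For the multiplier bound, Lemma~\ref{lemma:moreau} combined with $\jProd{y(\cdot)}{y(\cdot)} = \proj{g(x(\cdot))-\lambda/\augP}$ shows that the update $\lambda-\augP(g(x(\cdot))-y(\cdot)^2)$ of Proposition~\ref{prop:aug_conv_slack} coincides with $\overline\lambda(\cdot) = \proj{\lambda-\augP g(x(\cdot))}$, while the $\mult$-updates are literally the same; hence \eqref{prop:aug_conv_sym:2} is \eqref{prop:aug_conv_slack:2}.

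The main obstacle is reconciling the two ball-constrained minimizations, because Proposition~\ref{prop:alg} is a statement about global minimization in $y$, whereas Proposition~\ref{prop:aug_conv_slack} restricts $y$ to $\ball{y^*}{\hat\epsilon}$. I would handle this with a uniform continuity argument. The slack globally minimizing $\augSlack(x,\cdot,\mult,\lambda)$ is governed by $\proj{g(x)-\lambda/\augP}$, which depends continuously on $(x,\mult,\lambda,\augP)$; at the base point one checks, using $\jProd{g(x^*)}{\lambda^*}=0$ with both in $\stdCone$, that $\proj{g(x^*)-\lambda^*/\augP} = g(x^*) = \jProd{y^*}{y^*}$ for every $\augP>0$, so the continuous square-root branch taken near $y^*$ equals $y^*$ there. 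Choosing $\epsilon$ small and $\overline\augP$ large (with $\delta$ correspondingly small) then forces this global minimizer to lie in the interior of $\ball{y^*}{\hat\epsilon}$ for all $x\in\ball{x^*}{\epsilon}$ and all $(\mult,\lambda,\augP)\in D$. For such parameters the inner minimization over $\ball{y^*}{\hat\epsilon}$ agrees with the global one, so minimizing $\augConeP{\augP}$ over $\ball{x^*}{\epsilon}$ is equivalent to minimizing $\augSlackP{\augP}$ over the product ball, and the uniqueness furnished by Proposition~\ref{prop:aug_conv_slack} transfers to uniqueness of $x(\mult,\lambda,\augP)$.

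A secondary point to verify along the way is consistency of the square-root branch: one must use the branch of $y(\cdot)$ lying near $y^*$ so that $\jProd{y(\cdot)}{y(\cdot)}$ matches the projection formula and the identification of minimizers of $\augSlackP{\augP}$ with those of $\augConeP{\augP}$ is unambiguous. With the branch fixed and the localization argument in hand, every conclusion of Proposition~\ref{prop:aug_conv_sym} follows by pushing forward the corresponding conclusion of Proposition~\ref{prop:aug_conv_slack}.
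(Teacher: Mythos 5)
Your overall strategy coincides with the paper's own proof: pass to the slack reformulation with $y^* = \sqrt{g(x^*)}$, use Corollary~\ref{col:licq_nondeg} and Proposition~\ref{prop:sosc_slack} to check the hypotheses of Proposition~\ref{prop:aug_conv_slack}, then use Proposition~\ref{prop:alg} and Lemma~\ref{lemma:moreau} to identify minimizers of $\augConeP{\augP}$ over $\ball{x^*}{\epsilon}$ with minimizers of $\augSlackP{\augP}$ over a product ball, and to rewrite the slack multiplier update as $\proj{\lambda - \augP g(x(\mult,\lambda,\augP))}$. You also correctly isolate the one real obstacle: Proposition~\ref{prop:alg} is about \emph{global} minimization in $y$, while Proposition~\ref{prop:aug_conv_slack} confines $y$ to $\ball{y^*}{\hat\epsilon}$, so one must show that the canonical slack $\sqrt{\proj{g(x)-\lambda/\augP}}$ remains in that ball for all $x \in \ball{x^*}{\epsilon}$ and $(\mult,\lambda,\augP) \in D$. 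Your bound on $\norm{\proj{g(x)-\lambda/\augP} - g(x^*)}$ via nonexpansiveness of the projection is exactly the paper's computation.

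The gap is in how you close that obstacle. You dispose of it by asserting that $v \mapsto \sqrt{v}$ behaves continuously near $g(x^*)$ ("uniform continuity argument", "the continuous square-root branch"). But $g(x^*)$ is in general a \emph{boundary}, i.e.\ rank-deficient, point of $\stdCone$, and the only regularity result established in the paper, Lemma~\ref{lemma:root}, gives continuous differentiability of the square root only on the set of \emph{full-rank} elements; it says nothing at singular points, and indeed the map is not Lipschitz there. Supplying the needed quantitative continuity at such a point is precisely the technical core of the paper's proof, occupying most of it: one takes the Peirce decomposition $\sqrt{v} = w_1 + w_2 + w_3$ relative to the idempotent $c$ carrying $g(x^*)$, applies the $C^1$ bound of Lemma~\ref{lemma:root} only inside the subalgebra $V(c,1)$ (where $g(x^*)$ has full rank), and controls the off-block components $w_2, w_3$ via Lemma~\ref{lemma:half} and the inequality $\norm{z}^2 \le \norm{z^2}\sqrt{r}$. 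Note the order of the resulting estimate: a bound $\norm{w_3^2} \le \hat\epsilon_2$ only yields $\norm{w_3} \le (\hat\epsilon_2 \sqrt{r})^{1/2}$, a square-root-order (H\"older-$1/2$ type) loss that a naive continuity claim hides and that forces the paper's careful choice \eqref{eq:al_e1_e2} of $\hat\epsilon_2$ relative to $\hat\epsilon_1$. Continuity of the spectral map $\sqrt{\cdot}$ on all of $\stdCone$ is a true fact and could alternatively be cited from results on spectral/L\"owner operators in \cite{Baes04} or \cite{SS08}, so your argument is repairable; but as written, the step carrying all of the difficulty is assumed rather than proved.
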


Note that the argument in Proposition~\ref{prop:alg} shows that \eqref{eq:theo_min2} is equivalent to
\begin{align}
\underset{x,y}{\text{minimize}} & \quad\augSlackP{{\augP}}(x,y,\mult,\lambda) \label{eq:theo_min1_aux}\\ 
\text{subject to} &\quad  (x,y) \in \ball{x^*}{\epsilon}\times  \jAlg. \nonumber
\end{align}
Moreover, if $(\hat x, \hat y)$ is an optimal solution for  problem \eqref{eq:theo_min1_aux},  we have $\hat y^2 = \proj{g(\hat x) - {\lambda}/{\augP}}$. 
Therefore, if $\hat \delta$ and $\hat \epsilon$ provided by Proposition~\ref{prop:aug_conv_slack} were such that  
$ \sqrt{\proj{g(x) - {\lambda}/{\augP}}}$ stays in the ball $\ball{\sqrt{g(x^*)}}{\hat \epsilon}$ for all $x \in \ball{x^*}{\hat \epsilon}$, then it would
be very 
straightforward to prove Proposition~\ref{prop:aug_conv_sym}. As this  is not generally the case, in the  
proof we have to argue that we can adjust $\hat \delta,\hat \epsilon$ appropriately.  
The proof  is similar to \cite{SSZ08}, but we need to adjust the proof to make use of the Euclidean Jordan algebra machinery.
Before we proceed, we need a few auxiliary lemmas.
\begin{lemma}\label{lemma:root}
Let $\jAlg$ be an Euclidean Jordan algebra with rank $r$ and let $\psi _ \jAlg$ denote 
the function that maps $y$ to $\sqrt{\abs{y}}$, where $\abs{y} = \sqrt{y^2}$. Let $\jAlg^* = \{y \in \jAlg \mid \matRank y = r\}$.
Then, $\psi _ \jAlg$ is continuously differentiable over $\jAlg^*$.
\end{lemma}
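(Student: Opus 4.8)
The plan is to realize $\psi_\jAlg$ as a composition of manifestly smooth maps and to reduce everything to the smoothness of the square-root map on $\interior \stdCone$. First I would observe that $\psi_\jAlg(y) = \sqrt{\abs{y}} = s^{-1}(s^{-1}(\jProd{y}{y}))$, where $s(z) = \jProd{z}{z}$ denotes the squaring map and $s^{-1}$ its inverse on $\interior\stdCone$. On $\jAlg^*$ every eigenvalue $\eig_i$ of $y$ is nonzero, so by item \ref{paux:2} of Proposition~\ref{prop:aux} the element $\jProd{y}{y} = \sum_i \eig_i^2 c_i$ lies in $\interior\stdCone$; applying the square root once keeps us in $\interior\stdCone$ (the eigenvalues $\abs{\eig_i}$ are positive), so the two outer applications of $s^{-1}$ are well defined. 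Since $\jAlg^*$ is precisely the set of invertible elements, it is open, and $y \mapsto \jProd{y}{y}$ is a polynomial map, hence $C^\infty$. Thus it remains only to prove that $s^{-1} \colon \interior\stdCone \to \interior\stdCone$ is continuously differentiable.

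For this I would use the inverse function theorem. The map $s$ is a $C^\infty$ bijection of $\interior\stdCone$ onto itself: surjectivity and injectivity follow from the spectral theorem (Theorem~\ref{theo:spec}) together with the existence and uniqueness of the square root in $\interior\stdCone$. Its differential is computed directly from
\begin{equation*}
\left.\frac{d}{dt}\right|_{t=0} \jProd{(z+th)}{(z+th)} = \jProd{z}{h} + \jProd{h}{z} = 2 L_z h,
\end{equation*}
so $Ds(z) = 2L_z$. Hence $s^{-1}$ is $C^\infty$ (in particular $C^1$) wherever $L_z$ is invertible, and the claim reduces to showing that $L_z$ is invertible for every $z \in \interior\stdCone$.

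The key step is this invertibility, which I would establish via the Peirce decomposition. Fixing a Jordan frame $c_1,\dots,c_r$ with $z = \sum_i \eig_i c_i$, Theorem~\ref{theo:peirce2} and the associated multiplication rules give that $L_z$ acts as multiplication by $\eig_i$ on each $V_{ii}$ and by $(\eig_i + \eig_j)/2$ on each $V_{ij}$ with $i<j$. Since $z \in \interior\stdCone$ forces every $\eig_i > 0$ by item \ref{paux:2} of Proposition~\ref{prop:aux}, all these numbers are positive; as $L_z$ is self-adjoint (Property~3 of the Jordan product), it is therefore positive definite, hence invertible. Combining this with the inverse function theorem shows $s^{-1}$ is $C^\infty$ on $\interior\stdCone$, and the chain rule then yields that $\psi_\jAlg = s^{-1}\circ s^{-1}\circ s$ is continuously differentiable on $\jAlg^*$.

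I expect the main obstacle to be the bookkeeping around the square-root map rather than any deep difficulty: one must check that $s$ genuinely maps $\interior\stdCone$ bijectively onto itself, so that the local inverse furnished by the inverse function theorem coincides with the globally defined square root, and the eigenvalue computation for $L_z$ through the Peirce blocks is where the actual content lies. An alternative route would be to invoke the theory of L\"owner (spectral) operators, where smoothness of the induced map follows from smoothness of the scalar function $t \mapsto \sqrt{\abs{t}}$ on $\Re \setminus \{0\}$; but the inverse function theorem argument is more self-contained and fits the style of the paper.
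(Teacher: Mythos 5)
Your proof is correct, but it takes a genuinely different route from the paper's. The paper disposes of the lemma essentially by citation: it observes that $\psi_\jAlg$ is the spectral (L\"owner) map generated by the symmetric function $\varphi(x)=(\sqrt{\abs{x_1}},\dots,\sqrt{\abs{x_r}})$, which is continuously differentiable on $Q=\{x\in\Re^r \mid x_i\neq 0,\ \forall i\}$, and then invokes Theorem~53 of \cite{Baes04} (with Theorem~3.2 of \cite{SS08} as an alternative) to transfer the differentiability of $\varphi$ to the induced map on $\jAlg^*$ --- precisely the ``alternative route'' you mention and set aside at the end. Your factorization $\psi_\jAlg = s^{-1}\circ s^{-1}\circ s$, with smoothness of $s^{-1}$ on $\interior\stdCone$ obtained from the inverse function theorem and the positivity of the Peirce eigenvalues $\eig_i$ and $(\eig_i+\eig_j)/2$ of $L_z$, is sound: the diagonalization of $L_z$ over the Peirce blocks is the same computation the paper performs in \eqref{eq:y_lb}, the injectivity of squaring on $\interior\stdCone$ (uniqueness of the square root inside the cone) is the standard fact the paper itself records in a footnote, and $\jAlg^*$ is indeed open, being the set where the determinant polynomial $\prod_i \eig_i(y)$ is nonzero. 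What your argument buys is self-containedness: it relies only on tools already developed in the paper (Peirce decomposition, Proposition~\ref{prop:aux}) rather than outsourcing the real work to the spectral-map literature, and it actually yields that $s^{-1}$ is $C^\infty$, not merely $C^1$. What it gives up is generality: it exploits the special composite structure $\sqrt{\abs{y}}=((y^2)^{1/2})^{1/2}$ and would not extend to an arbitrary spectral function, whereas the paper's citation-based argument covers every spectral map generated by a $C^1$ symmetric function with no extra effort.
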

\begin{proof}
Let $Q = \{x \in \Re^r \mid x_i \neq 0, \forall i\}$ and $\varphi:\Re^r \to \Re^r$ be the function 
that maps $x\in \Re^r$ to $(\sqrt{\abs{x_1}},\ldots,\sqrt{\abs{x_r}})$. Then, following the discussion 
in Section 6 of \cite{Baes04}, $\psi 	_ \jAlg$ is the spectral map generated 
by $\varphi$. That is, if $y \in \jAlg$ and its spectral decomposition is 
given by $y = \sum _{i=1}^r \sigma _i c_i$, then 
$$
\psi 	_ \jAlg(y) = \sum _{i=1}^r \varphi_i(\sigma _i) c_i,
$$
where  $\varphi _i: \Re \to \Re$ are the component functions of $\varphi$. Then, Theorem~53 in \cite{Baes04} shows 
that $\psi _ \jAlg$ is continuously differentiable over $\jAlg^*$, because 
$\varphi $ is continuously differentiable over $Q$. A similar conclusion also follows from 
Theorem~3.2 in \cite{SS08}.
\end{proof}

\begin{lemma}\label{lemma:half}
	Let $\jAlg$ be an Euclidean Jordan algebra with rank $r$, let $c$ be 
	an idempotent and $w \in V(c,1/2)$. Then there are $w_{0} \in V(c,0)$, 
	$w_{1} \in V(c,1)$ such that 
	\begin{align*}
	w^2 & = w_{0}^2 + w_{1}^2, \\
	\norm{w_{0}^2} & = \norm{w_{1}^2}.
	\end{align*}
\end{lemma}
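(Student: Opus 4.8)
The plan is to work entirely inside the Peirce decomposition of $\jAlg$ relative to $c$ and to let the quadratic representation $Q_w = 2L_w^2 - L_{w^2}$ play the role of the map $x \mapsto wxw$ from the model case $\stdCone = \PSDcone{n}$, in which $w = \bigl(\begin{smallmatrix} 0 & B \\ B^\top & 0 \end{smallmatrix}\bigr)$, $w_1^2 = BB^\top$ and $w_0^2 = B^\top B$. First I would observe that, since $w \in V(c,1/2)$, Theorem~\ref{theo:peirce1} yields $\jProd{w}{w} \in V(c,1) + V(c,0)$, so that $w^2 = a + b$ with $a \in V(c,1)$, $b \in V(c,0)$ and $\jProd{a}{b} = 0$. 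The statement then reduces to showing (i) that $a$ and $b$ lie in $\stdCone$ --- whence, $V(c,1)$ and $V(c,0)$ being Euclidean Jordan algebras, one may take $w_1 \in V(c,1)$, $w_0 \in V(c,0)$ with $a = w_1^2$ and $b = w_0^2$ --- and (ii) that $\norm{a} = \norm{b}$.

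For (i) I would argue by self-duality. For every $u \in V(c,1)$ the component $b$ is orthogonal to $\jProd{u}{u} \in V(c,1)$, so $\inner{a}{\jProd{u}{u}} = \inner{w^2}{\jProd{u}{u}} = \inner{\jProd{w}{w}}{\jProd{u}{u}} \geq 0$, the inequality holding because both squares lie in the self-dual cone $\stdCone$. Thus $a$ belongs to the dual of the cone of squares of $V(c,1)$; since that cone is a symmetric cone and hence self-dual inside $V(c,1)$, we get that $a$ is a square in $V(c,1)$. The same argument with $V(c,0)$ gives $b = w_0^2$.

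The hard part is (ii). In the model case $\norm{a} = \norm{b}$ is the identity $\norm{BB^\top} = \norm{B^\top B}$, reflecting that $BB^\top$ and $B^\top B$ share their nonzero eigenvalues; the obstacle is to reproduce this ``cyclicity'' intrinsically, without an eigenvalue-pairing argument for $w$. The key is the identity $b = Q_w(c)$: from $\jProd{c}{w} = \tfrac12 w$ one gets $2L_w^2(c) = w^2$, while $L_{w^2}(c) = \jProd{(a+b)}{c} = a$, so $Q_w(c) = w^2 - a = b$. The third defining property of $\jProd{}{}$ makes $L_w$ and $L_{w^2}$, hence $Q_w$, self-adjoint, and the fundamental formula $Q_{Q_x y} = Q_x Q_y Q_x$ of \cite{FK94} gives $Q_{w^2} = Q_w^2$ upon taking $y = e$. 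Therefore
$$
\norm{b}^2 = \inner{Q_w(c)}{Q_w(c)} = \inner{c}{Q_w^2(c)} = \inner{c}{Q_{w^2}(c)}.
$$
A short polarization of $Q_{w^2} = Q_{a+b}$, using $\jProd{a}{b} = 0$, $\jProd{b}{c} = 0$ and $\jProd{a}{c} = a$, evaluates $Q_{w^2}(c) = a^2$; and since $c$ is the unit of $V(c,1)$ with $a^2 \in V(c,1)$, this gives $\norm{b}^2 = \inner{c}{a^2} = \tr(a^2) = \norm{a}^2$, which is precisely $\norm{w_0^2} = \norm{w_1^2}$.
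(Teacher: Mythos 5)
Your proof is correct, and its core mechanism differs genuinely from the paper's. Both arguments start identically: Peirce-decompose $w^2 = a + b$ with $a \in V(c,1)$, $b \in V(c,0)$, and argue both components are squares in their subalgebras (here you are actually more explicit than the paper, which asserts this directly from $w^2 \in \stdCone$; your self-duality argument inside $V(c,1)$ and $V(c,0)$ is the right way to fill in that step). The divergence is in proving $\norm{a} = \norm{b}$. The paper identifies the components as $w_1^2 = \jProd{c}{w^2}$ and $w_0^2 = w^2 - \jProd{c}{w^2}$ and then does a three-line norm expansion, using only the associativity of the trace form and power associativity: $2\inner{w^2}{\jProd{c}{w^2}} = \inner{w^3}{w} = \norm{w^2}^2$, so the cross term exactly cancels $\norm{w^2}^2$. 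You instead bring in the quadratic representation $Q_w = 2L_w^2 - L_{w^2}$, establish $b = Q_w(c)$, and combine self-adjointness of $Q_w$ with the fundamental formula (via $Q_{w^2} = Q_w^2$) and the evaluation $Q_{w^2}(c) = a^2$ to get $\norm{b}^2 = \inner{c}{a^2} = \norm{a}^2$; all of these identities check out. What each approach buys: the paper's computation is shorter and entirely elementary, resting only on the defining properties of an Euclidean Jordan algebra, and so fits the self-contained style of the paper; yours imports heavier machinery (the fundamental formula, Proposition II.3.3 in \cite{FK94}) but is conceptually transparent --- $Q_w$ plays the role of $x \mapsto wxw$, so the equality $\norm{w_0^2} = \norm{w_1^2}$ becomes the intrinsic analogue of $\tr(BB^\T BB^\T) = \tr(B^\T B B^\T B)$ --- and it yields the reusable identities $b = Q_w(c)$ and $Q_{w^2}(c) = a^2$ along the way.
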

\begin{proof}
According to Theorem \ref{theo:peirce1}, $w^2 \in V(c,0) + V(c,1)$. 
As $w^2 \in \stdCone$, this implies the existence of $w_0 \in V(c,0)$ 
and $w_1 \in V(c,1)$ such that  $w ^2 = w_{0}^2 + w_{1}^2$. 
From the proof of Proposition~IV.1.1 in~\cite{FK94}, we 
see that, in fact,
$$
w_{0}^2 = w^2 - \jProd{c}{w^2}, \qquad w_{1}^2 = \jProd{c}{w^2}.
$$
Then, we have
\begin{align*}
\norm{w_{0}^2}^2 & = \norm{w^2}^2 - 2\inner{w^2 }{\jProd{c}{w^2}} + \norm{w_1^2}^2\\
& = \norm{w^2}^2 - 2\inner{w^3 }{\jProd{c}{w}} + \norm{w_1^2}^2\\
& = \norm{w^2}^2 - \inner{w^3 }{w} + \norm{w_1^2}^2 \\
& = \norm{w_1^2}^2,
\end{align*}
where the second equality follows from the power associativity of the Jordan 
product and the fact that the algebra is Euclidean, which implies that 
$\inner{w^2 }{\jProd{c}{w^2}} =  \inner{\jProd{w^3}{w}}{c} = \inner{w^3}{\jProd{c}{w}} $. 
Then, the third equality follows from $w \in V(c,1/2)$.
\end{proof}

\begin{proof}[Proof of Proposition \ref{prop:aug_conv_sym}]
Note that if \eqref{eq:sosc_sym} and nondegeneracy are satisfied at $(x^*,\mult^*,\lambda^*)$, then 
$ (x^*,\sqrt{g(x^*)},\mult^*,\lambda^*)$ satisfies \eqref{eq:sosc_nlp.1} and LICQ, due 
to Corollary \ref{col:licq_nondeg}. 
So let $\hat{\augP}, \hat \delta, \hat \epsilon, \hat M$ and $\hat D$ be as 
in Proposition~\ref{prop:aug_conv_slack}.
	
First, we consider the spectral decomposition of $g(x^*)$. Without loss of 
generality, we may assume~that
$$
g(x^*) = \sum _{i = 1}^{\matRank g(x^*)} \eig _i c_i,
$$
where the first $\matRank g(x^*)$ eigenvalues are positive and the remaining are zero. 
Then, we let $c = c_1 + \cdots + c_{\matRank g(x^*)}$ and consider the Euclidean 
Jordan algebra $V(c,1)$ together with its associated symmetric cone 
$\stdFace = \{\jProd{w}{w} \mid w \in V(c,1) \}$. We know that $V(c,1)$ has rank equal to $\matRank g(x^*)$. Moreover, by Proposition~\ref{prop:aux}, $g(x^*)$ belongs to the relative interior of $\stdFace$. In particular, we may select $\hat \epsilon _1 \in (0,\hat \epsilon)$ such 
that $\norm{z-g(x^*)} \leq \hat \epsilon _1$ and $z \in V(c,1)$ implies that 
$z$ lies in the relative interior of $\stdFace$ as well.

Now, we take the function $\psi _ {V(c,1)}$ from Lemma~\ref{lemma:root}. Note that 
if $v \in \stdCone$ , then $\psi _ {V(c,1)}(v) = \sqrt{v}$.
Since $\psi _ {V(c,1)}$ is continuously 
differentiable in $V(c,1)^*$, the mean value inequality tells us that for $\norm{v-g(x^*)} \leq \hat \epsilon _1$
and $v \in V(c,1)$ we have
\begin{align}
\norm{\sqrt{v} - \sqrt{g(x^*)}} \leq R\norm{v - g(x^*)},\label{eq:al_root}
\end{align} 
where $R$ is the supremum of $\norm{\jac{ \psi _{V(c,1)}}}$ over 
the set $V(c,1) \cap \ball{g(x^*)}{\hat \epsilon _1}$. We then let $\hat \epsilon _2 \in (0,\epsilon _1]$ be such that 
\begin{equation}
4 R^2\hat \epsilon_2^2 + \hat \epsilon_2 \sqrt{2r} + \hat \epsilon_2 \sqrt{r} \leq \hat \epsilon _1^2.\label{eq:al_e1_e2}
\end{equation}

Since $g$ is continuously differentiable, again by the mean value inequality,
 there is $l_g$ such that for 
every $x \in \ball{x^*}{\hat{\epsilon}}$ we have
\begin{equation}\label{eq:uniform_cont}
\norm{g(x) - g(x^*)} \leq l_g \norm{x- x^*}.
\end{equation}

We are now ready to construct the neighborhood $D$. We select $\epsilon, \delta, \overline \rho, M$ such that the 
following conditions are satisfied:
\begin{enumerate}
	\item $\epsilon \in (0,\hat \epsilon ], \delta \in (0, \hat \delta], \overline \rho \geq \hat \rho, M \geq \hat M$. \label{cond:1}
	\item $l_g \epsilon + \delta +  \norm{\frac{\lambda^*}{\overline \augP}} \leq \hat \epsilon _2$. \label{cond:2}
	\item $\epsilon, \delta$ are small enough and $M, \overline \augP$ are large enough such that the conclusions of Proposition~\ref{prop:aug_conv_slack} hold for those $\epsilon, \delta, M, \overline \augP$ and 
	such that the neighborhood $\ball{x^*}{\hat \epsilon}\times \ball{y^*}{\hat \epsilon}$ in \eqref{eq:local_al_slack} can 
	be replaced by $\ball{x^*}{\epsilon}\times \ball{y^*}{\hat \epsilon_1}$ without affecting 
	the conclusion of the theorem. \label{cond:3}
\end{enumerate}
We then have 
$$
D = \{ (\mult, \lambda, \rho) \mid \abs{\mult - \mult^*} +   \abs{\lambda - \lambda^*} < \delta \augP, \overline{\augP} \leq \augP  \}.
$$
For all $(\mult, \lambda, \augP) \in D$ and $x$ such that 
$\norm{x-x^*}\leq \epsilon$, we have
\begin{align*}
\norm {\proj{g(x) - \frac{\lambda}{\augP}}-g(x^*)}  & = \norm {\proj{g(x) - \frac{\lambda}{\augP}}-\proj{g(x^*)}} \\
& \leq \norm{g(x) - g(x^*) - \frac{\lambda - \lambda^*}{\augP} -\frac{\lambda^*}{\augP} } \\
& \leq \norm{g(x) - g(x^*)} + \norm{\frac{\lambda - \lambda^*}{\augP}} +\norm{\frac{\lambda^*}{\augP}  }\\
& \leq l_g \norm{x- x^*} + \delta +  \norm{\frac{\lambda^*}{\augP}} \\
& \leq \hat \epsilon_2,
\end{align*}
where the first inequality follows from the fact that the projections are nonexpansive maps and the fourth inequality 
follows by the definition of $D$, \eqref{eq:uniform_cont} and the fact that $\overline{\augP} \leq \augP$.

Now we will show that $\norm {\sqrt{\proj{g(x) - {\lambda}/{\augP}}}-\sqrt{g(x^*)}} \leq \hat \epsilon_1$ holds as well, which is our primary goal. More generally, we will show 
that if $v \in \stdCone \cap \ball{g(x^*)}{\hat \epsilon _2}$ then 
$\sqrt{v} \in \ball{\sqrt{g(x^*)}}{\hat \epsilon _1}$.

Thus suppose that $v \in \stdCone$ is such that $v \in \ball{g(x^*)}{\hat \epsilon _2}$. We consider 
the Peirce decomposition of $\sqrt{v}$ with respect the idempotent $c$, as in Theorem~\ref{theo:peirce1}. We write 
$\sqrt{v} = w_1 + w_2 + w_3$, where $w_1 \in V(c,1), w_2 \in V(c,1/2), w_3 \in V(c,0)$. We then have 
$v = w_1^2 + w_2^2 + w_3^2 +  2\jProd{w_2}{(w_1+w_3)} $. Also by Theorem~\ref{theo:peirce1}, $2\jProd{w_2}{(w_1+w_3)} \in V(c,1/2)$ and 
	$w_2^2 = w_{2,1}^2 + w_{2,0}^2$, for some $ w_{2,1} \in  V(c,1)$ and $w_{2,0} \in V(c,0)$. We group 
	the terms of $v-g(x^*)$ as follows
\begin{equation}
v-g(x^*) = ( w_1^2 + w_{2,1}^2 - g(x^*) ) + (w_3^2  + w_{2,0}^2) + (2\jProd{w_2}{(w_1+w_3)}),
\end{equation}
where the terms in parentheses belong, respectively, to the mutually orthogonal subspaces $V(c,1)$, $V(c,0)$ and $V(c,1/2)$.
Therefore, $\norm{v-g(x^*)}^2\leq \hat\epsilon _2^2$ implies that
\begin{align}
\norm{w_1^2 + w_{2,1}^2 - g(x^*)}^2 & \leq \hat \epsilon_2^2, \label{eq:al_1}\\ 
\norm{w_3^2}^2 & \leq \hat \epsilon_2^2, \label{eq:al_3}\\
\norm{w_{2,0}^2}^2 & \leq \hat \epsilon_2^2, \label{eq:al_4}
\end{align} 
where the last two inequalities follows from the fact that  $\norm{w_3^2  + w_{2,0}^2}^2 =  \norm{w_3^2}^2 + \inner{w_3^2}{w_{2,0}^2}+ \norm{w_{2,0}^2}^2  \leq \hat \epsilon_2^2$ and 
that $\inner{w_3^2}{w_{2,0}^2} \geq 0$, since $w_3^2,w_{2,0}^2 \in \stdCone$. From 
\eqref{eq:al_1}, \eqref{eq:al_4} and Lemma~\ref{lemma:half}, we obtain
$$
\norm{w_1^2 - g(x^*)}  \leq \norm{w_1^2 + w_{2,1}^2 - g(x^*)} +  \norm{w_{2,1}^2} \label{eq:al_5} \leq 2\hat \epsilon_2.\\ 
$$

We then use \eqref{eq:al_root} to conclude that 
\begin{equation}
\norm{w_1 - \sqrt{g(x^*)}} \leq 2 R\hat \epsilon_2 \label{eq:al_w1_g},
\end{equation}
since $\sqrt{w_1^2} = w_1$ holds\footnote{If we fix $v \in \stdCone$, there might be several elements $a$ satisfying $\jProd{a}{a} = v$, but only 
	one of those $a$ belongs to $\stdCone$. With our definition, $\sqrt{v}$ is precisely this $a$. As $\sqrt{v} \in \stdCone$, the element $w_1$ appearing 
	in its Pierce decomposition also belongs to $\stdCone$. To see that, note that if $w_1$ had a negative eigenvalue $\sigma$, we would have $\inner{\sqrt{v}}{\sigma d} = \inner{w_1}{\sigma d } < 0$, where $d$ is the idempotent associated to $\sigma$. This shows that  $\sqrt{w_1^2} = w_1$ indeed. }.

Recall that given $z \in \stdCone$, we have $\norm{z}^2 = \sigma _1^2 + \cdots + \sigma _r^2$, 
where $\sigma _i$ are the eigenvalues of $z$. Therefore, $\norm{z}^2$ is the $1$-norm of the vector $u = (\sigma _1^2, \ldots, \sigma _r^2)$, which is majorized 
by $\norm{u}_2 \sqrt{r} = \norm{z^2}\sqrt{r}$, i.e., $\norm{z}^2 \leq \norm{z^2} \sqrt{r} $. This, together with \eqref{eq:al_3}, \eqref{eq:al_4} and Lemma~\ref{lemma:half} imposes the following inequalities on 
the $w_i$:
\begin{align} 
\norm{w_3}^2 & \leq \hat \epsilon_2 \sqrt{r},  \label{eq:al_6} \\ 
\norm{w_2}^2 \leq  \norm{w_2^2} \sqrt{r} =    \norm{w_{2,0}^2} \sqrt{2r} & \leq \hat \epsilon_2 \sqrt{2r}. \label{eq:al_7}
\end{align} 
From $\sqrt{g(x^*)} \in V(c,1)$, \eqref{eq:al_w1_g}, \eqref{eq:al_6} and \eqref{eq:al_7}, we obtain
\begin{align*}
\norm{\sqrt{v} - \sqrt{g(x^*)}}^2 & = \norm{w_1-\sqrt{g(x^*)}}^2 +  \norm{w_2}^2 + \norm{w_3}^2 \\
& \leq 4 R^2\hat \epsilon_2^2 + \hat \epsilon_2 \sqrt{2r} + \hat \epsilon_2 \sqrt{r} \\
& \leq  \hat \epsilon_1^2, 
\end{align*}
where the last inequality follows from \eqref{eq:al_e1_e2}. 
To recap, 
we have shown that whenever $(\mult,\lambda,\augP) \in D$ and $x$ is such that $\norm{x - x^*} \leq \epsilon$, 
then $$
{\sqrt{\proj{g(x) - \frac{\lambda}{\augP}}}} \in \ball{\sqrt{g(x^*)}}{\hat \epsilon _1}. 
		$$
Thus, letting $x(\mult,\lambda,\augP)$ be a minimizer of \eqref{eq:theo_min2} and $\hat y = {\sqrt{\proj{g(x(\mult,\lambda,\augP)) - {\lambda}/{\augP}}}}$, we have $x(\mult,\lambda,\augP) \in \ball{x^*}{\epsilon} $, and $\hat y \in \ball{\sqrt{g(x^*)}}{\hat \epsilon _1}$. 
Now, we note that $(x(\mult,\lambda,\augP),\hat y)$ is a minimizer of \eqref{eq:local_al_slack} with 
$y^* = \sqrt{g(x^*)}$ and $\ball{x^*}{\epsilon}\times \ball{y^*}{\hat \epsilon_1}$ in place of $\ball{x^*}{\hat \epsilon}\times \ball{y^*}{\hat \epsilon}$. 
In fact, let $(\overline x, \overline y)$ be the minimizer of \eqref{eq:local_al_slack}. By Proposition 
\ref{prop:alg}, we have 
\begin{align*}
\augSlackP{\augP}(x(\mult,\lambda,\augP),\hat y, \mu,\lambda)& = \augConeP{\augP}(x(\mult,\lambda,\augP),\mu,\lambda) \\ 
& \leq \augConeP{\augP}(\overline x,\mu,\lambda) \\
& = \min _{y}  \augSlackP{\augP} (\overline x, y, \mu, \lambda)\\
& \leq \augSlackP{\augP} (\overline x, \overline y, \mu, \lambda)
\end{align*}
As $(x(\mult,\lambda,\augP),\hat y)$ is feasible to \eqref{eq:local_al_slack}, we conclude that $(x(\mult,\lambda,\augP),\hat y)$ is indeed a minimizer of \eqref{eq:local_al_slack} with 
$\ball{x^*}{\epsilon}\times \ball{y^*}{\hat \epsilon_1}$ in place of $\ball{x^*}{\hat \epsilon}\times \ball{y^*}{\hat \epsilon}$. Furthermore, since the minimizer of \eqref{eq:local_al_slack} is unique, it follows 
that  $x(\mult,\lambda,\augP)$ is the unique minimizer of 
 $\eqref{eq:theo_min2}$. Due to  Proposition~\ref{prop:aug_conv_slack} and the choice of $\epsilon, \delta, M, \overline \augP$ (see items \ref{cond:1} to \ref{cond:3} above),  $x(\cdot,\cdot,\cdot)$ must be differentiable in the interior of $D$.
 Since \eqref{prop:aug_conv_slack:1} holds, then \eqref{prop:aug_conv_sym:1} holds as well. This concludes 
 item 1.
 
 Item $2$ also follows from the fact that 
 \begin{align*}
 \hat \lambda(\mult,\lambda,\augP) & = \lambda - \augP(g(x(\mult,\lambda,\augP)) - \hat y^2)  \\ 
 & =  \lambda - \augP\left( g(x(\mult,\lambda,\augP)) - \proj{g(x(\mult,\lambda,\augP)) - \frac{\lambda}{\augP}}\right) \\
 & = \proj{\augP g(x(\mult,\lambda,\augP)) - \lambda} - (\augP g(x(\mult,\lambda,\augP)) - \lambda ) \\
 & = \proj{\lambda - \augP g(x(\mult,\lambda,\augP)) }\\
 & = \overline \lambda(\mult,\lambda,\augP),
 \end{align*}
 where the  second to last equality follows from  Lemma~\ref{lemma:moreau}. Hence the 
 estimate in \eqref{prop:aug_conv_slack:2} also implies the estimate in \eqref{prop:aug_conv_sym:2}.
\end{proof}

\section{Concluding remarks}\label{sec:conc}
In this paper we presented a discussion on optimality conditions for nonlinear symmetric cone 
programs through slack variables. By doing so, we obtain an ordinary nonlinear programming problem, 
which is more straightforward to analyze. This connection gives some interesting insights, such 
as Theorem~\ref{theo:strict}, and makes it possible to analyze algorithms for \eqref{eq:sym} as we 
did in Section \ref{sec:alg}.

However, one slightly upsetting part of the usage of slack variables is the fact that when the 
second order sufficient conditions (SOSCs) are written down, we get that strict complementarity is 
automatically satisfied. In particular, if we have a KKT tuple that \emph{does not} 
satisfy strict complementarity and we wish to check whether it is a local minimum, there is no way to apply the theory described in this paper.
Of course, it is indeed possible to derive  SOSCs without assuming 
strict complementarity, as it was done in \cite{BR05, shapiro97}. However, at this point we do not know 
how to explain why this difference arises. An interesting research topic would be to find out whether \eqref{eq:sym} admits another reformulation as a 
nonlinear programming problem without this deficiency.

\section*{Appendix }

As we discussed in Section~\ref{sec:methods},
Algorithms~\ref{alg:aug_slack} and~\ref{alg:aug_sym} are equivalent in
terms of generated sequences, but there should be some differences
from the computational point of view. To illustrate this, we
implemented both algorithms in MATLAB R2016b, under Ubuntu 16.04, with
core i7 3.9GHz and 16GB of RAM. The numerical experiments here are
simple, so the subproblems were solved simply with the default
\texttt{fminunc} function, that uses a quasi-Newton method. Noting
that the sequences generated by both algorithms are the same only if
the subproblems are solved exactly, we set up the optimization
tolerance for the subproblems as $10^{-8}$. We also considered some
ideas from the augmented Lagrangian solver ALGENCAN (see Chapter~12 in~\cite{BM14}), so
the initial penalty parameter is dependent of the problem, and its
update is done by using an analysis of the infeasibility
measure. Moreover, the termination criteria consist in satisfying KKT
conditions approximately, with precision~$10^{-4}$.

We compare the algorithms with three problems, which are special cases
of NSCP: nonlinear programming problem (NLP) number 1 from Hock and
Schittkowski's list~\cite{HS80} (and originally by
Betts~\cite{Bet77}), nonlinear second-order cone programming problem (NSOCP)
by Kanzow, Ferenczi and Fukushima~\cite{KFF09}, and nonlinear semidefinite programming problem
(NSDP) by Noll~\cite{N07} (see Section~12 therein). The results are described in
Table~\ref{tab:experiments}, with ``inner iterations'' as the total
number of iterations for solving the subproblems. All problems were
successfully solved, using both algorithms. Also, for each problem,
the sequences of iterates $\{ x_k \}$ generated by the algorithms were
the same. However, the number of inner iterations and the total
times were clearly different. As expected, the slack variables approach
tends to require more time, because of the increase of the dimension
of the problem.

\begin{table}[!htb]
  \centering
  \caption{Comparison of Algorithms~\ref{alg:aug_slack} (with slack) 
    and~\ref{alg:aug_sym} (without slack).}
  \label{tab:experiments}
  \begin{tabular}{|c|c|c|c|c|}
     \hline
     \multirow{2}{*}{Problems} & \multirow{2}{*}{Methods} 
     & Outer & Inner & \multirow{2}{*}{Time (s)} \\
     & & iterations ($k$) & iterations & \\
    \hline
    \multirow{2}{*}{NLP}   & Algorithm~\ref{alg:aug_slack} & 1 & 50  & 0.017317 \\
                           & Algorithm~\ref{alg:aug_sym}   & 1 & 51  & 0.014929 \\
    \hline
    \multirow{2}{*}{NSOCP} & Algorithm~\ref{alg:aug_slack} & 5 & 76  & 0.069946 \\
                           & Algorithm~\ref{alg:aug_sym}   & 5 & 18  & 0.018908 \\ 
    \hline
    \multirow{2}{*}{NSDP}  & Algorithm~\ref{alg:aug_slack} & 5 & 230 & 0.210977 \\
                           & Algorithm~\ref{alg:aug_sym}   & 5 & 10  & 0.036631 \\
    \hline
  \end{tabular}
\end{table}

Whether there exist some cases where Algorithm 1 performs better is
still a matter of further investigation. In particular, the projection operator
used only in Algorithm 2 is computationally expensive for the NSDP case,
so depending on the optimization instance, the slack approach may be
advantageous. We note that Burer and Monteiro have proposed a very successful variant of 
Algorithm 1 for linear SDPs \cite{BM03,BM05}, where they exploit the existence of optimal solutions 
with low rank. In~\cite{LFF16}, for the NSDP case, 
we also presented a numerical comparison between an augmented Lagrangian method for 
the formulation \eqref{eq:sym} and for \eqref{eq:slack}. And, indeed, for some small 
problems, the formulation \eqref{eq:slack} had faster running times. However, 
we used the software PENLAB \cite{Penlab}, which uses a different kind of augmented Lagrangian
that is not covered by the discussion in Section \ref{sec:alg}.

\section*{Acknowledgments} 
We thank the referees for their helpful and insightful comments, which helped to improve the paper. 
In particular, the appendix  was included due to a suggestion by one of the referees.
We would also like to express our gratitude to Prof.~Andreas Fischer for bringing the reference \cite{KS88} to our attention.
This work was supported by Grant-in-Aid for Young Scientists (B) (26730012), for Scientific Research (C) (26330029) and for Scientific Research (B) (15H02968)  from Japan Society for the Promotion of Science.

\subsection*{Addendum }
\emph{This version corrects a mistake in the statement of nondegeneracy in Definition~2.8 (Definition~2.2 in the Mathematics of Operations Research journal version). Although the statement was wrong, the subsequent proofs and discussion that use nondegeneracy were done under the correct definition, so no other adjustments are necessary. We also adjusted the first sentence of Section~4 to replace ``local'' with ``global''.}

\bibliographystyle{abbrvurl}
\bibliography{journal-titles,references}

\begin{thebibliography}{10}

\bibitem{Baes04}
M.~Baes.
\newblock Convexity and differentiability properties of spectral functions and
  spectral mappings on {E}uclidean {J}ordan algebras.
\newblock {\em Linear Algebra and its Applications}, 422(2):664--700, 2007.

\bibitem{Ber82a}
D.~P. Bertsekas.
\newblock {\em Constrained Optimization and {L}agrange Multipliers Methods}.
\newblock Academic Press, New York, 1982.

\bibitem{Ber99}
D.~P. Bertsekas.
\newblock {\em Nonlinear Programming}.
\newblock Athena Scientific, 2nd edition, 1999.

\bibitem{Bet77}
J.~T. Betts.
\newblock An accelerated multiplier method for nonlinear programming.
\newblock {\em Journal of Optimization Theory and Applications},
  21(2):137--174, 1977.

\bibitem{BM14}
E.~G. Birgin and J.~M. Mart\'inez.
\newblock {\em Practical augmented {L}agrangian methods for constrained
  optimization}.
\newblock Society for Industrial and Applied Mathematics, Philadelphia, PA,
  2014.

\bibitem{BonnansCS}
J.~F. Bonnans, R.~Cominetti, and A.~Shapiro.
\newblock Second order optimality conditions based on parabolic second order
  tangent sets.
\newblock {\em SIAM Journal on Optimization}, 9(2):466--492, 1999.

\bibitem{BR05}
J.~F. Bonnans and H.~Ram\'irez~C.
\newblock Perturbation analysis of second-order cone programming problems.
\newblock {\em Mathematical Programming}, 104:205--227, 2005.

\bibitem{BS00}
J.~F. Bonnans and A.~Shapiro.
\newblock {\em Perturbation Analysis of Optimization Problems}.
\newblock Springer-Verlag, New York, 2000.

\bibitem{BM03}
S.~Burer and R.~D. Monteiro.
\newblock A nonlinear programming algorithm for solving semidefinite programs
  via low-rank factorization.
\newblock {\em Mathematical Programming}, 95(2):329--357, 2003.

\bibitem{BM05}
S.~Burer and R.~D. Monteiro.
\newblock Local minima and convergence in low-rank semidefinite programming.
\newblock {\em Mathematical Programming}, 103(3):427--444, 2005.

\bibitem{Cominetti1990}
R.~Cominetti.
\newblock Metric regularity, tangent sets, and second-order optimality
  conditions.
\newblock {\em Applied Mathematics and Optimization}, 21(1):265--287, 1990.

\bibitem{FK94}
J.~Faraut and A.~Korányi.
\newblock {\em Analysis on {S}ymmetric {C}ones}.
\newblock Oxford mathematical monographs. Clarendon Press, Oxford, 1994.

\bibitem{FB06}
L.~Faybusovich.
\newblock Jordan-algebraic approach to convexity theorems for quadratic
  mappings.
\newblock {\em SIAM Journal on Optimization}, 17(2):558--576, 2006.

\bibitem{FB08}
L.~Faybusovich.
\newblock Several {J}ordan-algebraic aspects of optimization.
\newblock {\em Optimization}, 57(3):379--393, 2008.

\bibitem{Penlab}
J.~Fiala, M.~Ko{\v c}vara, and M.~Stingl.
\newblock {PENLAB}: A matlab solver for nonlinear semidefinite optimization.
\newblock {\em ArXiv e-prints}, November 2013.
\newblock \href {http://arxiv.org/abs/1311.5240} {\path{arXiv:1311.5240}}.

\bibitem{FF16}
E.~H. Fukuda and M.~Fukushima.
\newblock The use of squared slack variables in nonlinear second-order cone
  programming.
\newblock {\em Journal of Optimization Theory and Applications},
  170(2):394--418, 2016.

\bibitem{FF17}
E.~H. Fukuda and M.~Fukushima.
\newblock A note on the squared slack variables technique for nonlinear
  optimization.
\newblock {\em Journal of the Operations Research Society of Japan},
  60(3):262--270, 2017.

\bibitem{HL93}
J.-B. Hiriart-Urruty and C.~Lemaréchal.
\newblock {\em Convex {A}nalysis and {M}inimization {A}lgorithms. {I}. ,
  Fundamentals}.
\newblock Grundlehren der mathematischen Wissenschaften. Springer, Berlin, New
  York, 1993.

\bibitem{HS80}
W.~Hock and K.~Schittkowski.
\newblock Test examples for nonlinear programming codes.
\newblock {\em Journal of Optimization Theory and Applications},
  30(1):127--129, 1980.

\bibitem{JS03}
H.~T. Jongen and O.~Stein.
\newblock On the complexity of equalizing inequalities.
\newblock {\em Journal of Global Optimization}, 27(4):367--374, 2003.

\bibitem{KFF09}
C.~Kanzow, I.~Ferenczi, and M.~Fukushima.
\newblock On the local convergence of semismooth {N}ewton methods for linear
  and nonlinear second-order cone programs without strict complementarity.
\newblock {\em SIAM Journal on Optimization}, 20(1):297--320, 2009.

\bibitem{Kawasaki88}
H.~Kawasaki.
\newblock An envelope-like effect of infinitely many inequality constraints on
  second-order necessary conditions for minimization problems.
\newblock {\em Mathematical Programming}, 41(1-3):73--96, 1988.

\bibitem{KS88}
H.~Kleinmichel and K.~Sch\"{o}nefeld.
\newblock Newton-type methods for nonlinearly constrained programming
  problems-algorithms and theory.
\newblock {\em Optimization}, 19(3):397--412, 1988.

\bibitem{Kong2011}
L.~Kong, L.~Tun{\c{c}}el, and N.~Xiu.
\newblock Equivalent conditions for {J}acobian nonsingularity in linear
  symmetric cone programming.
\newblock {\em Journal of Optimization Theory and Applications},
  148(2):364--389, 2011.

\bibitem{LFF16}
B.~F. Louren{\c{c}}o, E.~H. Fukuda, and M.~Fukushima.
\newblock Optimality conditions for nonlinear semidefinite programming via
  squared slack variables.
\newblock {\em {\normalfont To appear} in Mathematical Programming}, 2016.

\bibitem{NW99}
J.~Nocedal and S.~J. Wright.
\newblock {\em Numerical Optimization}.
\newblock Springer Verlag, New York, 1st edition, 1999.

\bibitem{N07}
D.~Noll.
\newblock Local convergence of an augmented {L}agrangian method for matrix
  inequality constrained programming.
\newblock {\em Optimization Methods and Software}, 22(5):777--802, 2007.

\bibitem{pataki_handbook}
G.~Pataki.
\newblock The geometry of semidefinite programming.
\newblock In H.~Wolkowicz, R.~Saigal, and L.~Vandenberghe, editors, {\em
  Handbook of Semidefinite Programming: Theory, Algorithms, and Applications}.
  Kluwer Academic Publishers, online version at
  \url{http://www.unc.edu/~pataki/papers/chapter.pdf}, 2000.

\bibitem{shapiro97}
A.~Shapiro.
\newblock First and second order analysis of nonlinear semidefinite programs.
\newblock {\em Mathematical Programming}, 77(1):301--320, 1997.

\bibitem{Sturm2000}
J.~F. Sturm.
\newblock Similarity and other spectral relations for symmetric cones.
\newblock {\em Linear Algebra and Its Applications}, 312(1-3):135--154, 2000.

\bibitem{SS08}
D.~Sun and J.~Sun.
\newblock {L}öwner's operator and spectral functions in {E}uclidean {J}ordan
  {a}lgebras.
\newblock {\em Mathematics of Operations Research}, 33(2):421--445, 2008.

\bibitem{SSZ08}
D.~Sun, J.~Sun, and L.~Zhang.
\newblock The rate of convergence of the augmented {L}agrangian method for
  nonlinear semidefinite programming.
\newblock {\em Mathematical Programming}, 114(2):349--391, 2008.

\end{thebibliography}
\end{document}